\theoremstyle{plain}
\newtheorem{definition}{Definition}[section]
\newtheorem{thm}{Theorem}[section]
\newtheorem{theorem}{Theorem}[section]
\newtheorem{mainthm}{Theorem}
\newtheorem{lemma}[thm]{Lemma}
\newtheorem{cor}[thm]{Corollary}
\newtheorem{proposition}[thm]{Proposition}
\theoremstyle{definition}
\theoremstyle{remark}                  %% For unnumbered Remarks, etc.
\newtheorem{remark}[thm]{Remark}
\definecolor{darkgreen}{rgb}{0,0.4,0}
\def \l {\left(}
\def\r {\right)}
\numberwithin{equation}{section}
\def\tr{\textnormal{tr}}
\def\XXint#1#2#3{{\setbox0=\hbox{$#1{#2#3}{\int}$ }
\vcenter{\hbox{$#2#3$ }}\kern-.6\wd0}}
\title[On Continuity Equations in Space-Time Domains]{On Continuity Equations in Space-Time Domains}
\author[Y Zhang]{\bfseries Yuming Zhang}%, Inwon Kim}
\address{
Department of Mathematics \\ % \hfill (Received 00 00 2010)\\
University of California   \\ %\hfill (Revised  00 00 2010)\\
Los Angeles\\
USA}
\email{yzhangpaul@math.ucla.edu}
\begin{document}

%{\begin{flushleft}\baselineskip9pt\scriptsize
%PUBLICATIONS DE L'INSTITUT MATH\'EMATIQUE\newline
%Nouvelle s\'erie, tome 91(105) (2012), od--do \hfill DOI:
%\end{flushleft}}
\vspace{18mm} \setcounter{page}{1} \thispagestyle{empty}

\begin{abstract}

In this paper we consider a class of continuity equations that are conditioned to stay in general space-time domains, which is formulated as a continuum limit of interacting particle systems.
Firstly, we study the well-posedness of the solutions and provide examples illustrating that the stability of solutions is strongly related to the decay of initial data at infinity. In the second part, we consider the vanishing viscosity approximation of the system, given with the co-normal boundary data. If the domain is spatially convex, the limit coincides with the solution of our original system, giving another interpretation to the equation.
\end{abstract}

\maketitle

\section{Introduction}

Let $\Omega_T$ be a given space-time domain in $\mathbb{R}^d\times (0,T)$, denoted by
 $$
 \Omega_T:=\cup_{0<t<T}\left(\Omega(t)\times\{t\}\right).
 $$ In this domain we consider a continuity equation of the form:
\begin{equation*}
\left\{\begin{aligned}&
\frac{\partial}{\partial t}\mu(x,t)+\nabla\cdot (v \mu)(x,t)= 0 \quad\text{ in }\Omega_T, \\
&\text{ where }v(x,t)=-(\epsilon \nabla \mu/\mu+\nabla V +\nabla W *\mu)(x,t) \hbox{ with }  \epsilon\geq 0,\\
&\mu(x,0)=\mu_0(x).% supp(\mu)\subseteq \overline{\Omega}_T\subset \mathbb{R}^d\times \mathbb{R}^+.
\end{aligned}
\right.
\end{equation*}
in the space of probability measures, with the constraint that the support of  $\mu$ lies in the closure of $\Omega_T$.  When $\epsilon>0$, this constraint yields the co-normal boundary data on the lateral boundary of $\Omega_T$ \eqref{eqn2}. The first-order system, $\epsilon=0$, will be formulated using a projection operator \eqref{eqn1}: we will show that this system can be obtained as the vanishing viscosity limit as $\epsilon\rightarrow 0$. 

\medskip

 %$\Omega(t)$ is not assumed to be convex unless otherwise stated. 
%We are interested in the distribution of particles and $\mu$ here is the probability measure. $v$ is its tangent velocity vectors. We want to solve for $\mu$ that has support in $\overline{\Omega}_T$.

%\medskip

The above system describes the density of moving particles which are confined to some region and flow with a velocity field ${v}$ inside of the domain. One part of the velocity field is generated from interactions between different particles represented by the interaction potential $W$, given by $$(\nabla W *\mu) (x,t):=\int_{\overline{\Omega(t)}}\nabla W(x-y)d \mu(y,t).$$ This type of problem arises in many applications with various interaction kernel $W$, such as in swarming models with $ W(x)=-Ce^{-|x|},W(x)=-Ce^{-|x|^2}$ and in models of chemotaxis with $W(x)=\frac{1}{2\pi}\log|x|$, see \cite{prox,6} for more references. At the same time, the particles are subject to an external potential $V(x)$. Both $V, W$ are assumed to be smooth and $\lambda$-convex. More assumptions will be presented in section \ref{sub2.1} and \ref{sub3.1}. For the diffusion term, the model takes into account random movements of the particles.%With $a>0$ the model takes into account random movements of the particles.

\medskip 
  
In the first part of this paper, we consider $\epsilon=0$. 
Let $c(x,t)$ be the speed of the boundary (with positive sign if the boundary is expanding) and $n(x,t)$ be the unit space outer normal for $x\in\partial\Omega(t)$. We set $c(x,t)=0,n(x,t)=0$ if $(x,t)$ are not on the boundary. For simplicity we may omit the dependencies and write $c,n$. For each $(x,t)\in \overline{\Omega_T}$, we define a projection operator $ P_{x,t}: \mathbb{R}^d\rightarrow \mathbb{R}^d$ as follows
\begin{equation}\label{Pt}
 P_{x,t} (v)=\begin{cases}v &\text{ if }v\cdot n\leq c,\\
v-(v\cdot n)n+cn &\text{ if }v\cdot n> c.
\end{cases}
\end{equation}
Note at $x\in\Omega(t)$ in the interior, $ P_{x,t} $ is an identity map on $v$.
We refer readers to \cite{prox, wuli} where they defined a similar projection operator on stationary domains. 

\begin{figure}[t]
\centering\includegraphics[width=0.4\textwidth]{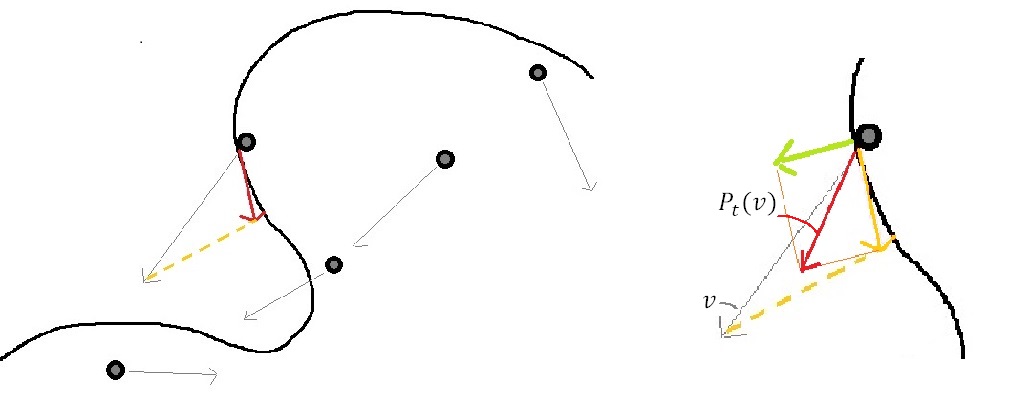}
\caption{the particle system and the $ P_{x,t} $ operator}\end{figure}

Set $\overline{\Omega}_T:=\cup_{0<t<T}(\overline{\Omega(t)}\times\{t\})$. We formulate the equation as:
\begin{equation}\label{eqn1}
\left\{\begin{aligned}&
\frac{\partial}{\partial t}\mu(x,t)+\nabla\cdot \left(\mu  P_{x,t} (-\nabla V -\nabla W *\mu)\right)(x,t)  = 0  &\text{ in }&\overline{\Omega}_T, \\
&\mu(x,0)=\mu_0(x)  &\text{ on }&  \overline{\Omega(0)}.
\end{aligned}
\right.
\end{equation}
Here $\mu_0$ is a probability measure on $\overline{\Omega(0)}$. First we assume it is compactly supported, and later we will also consider probability measures with exponential decay properties. In both cases $\mu_0$ has finite second moment.

Our main contributions are two-fold. The first part of the results are mainly motivated by the previous work by Carrillo, Slepcev and Wu \cite{prox}, where they show the well-posedness of equation \eqref{eqn1} in stationary, non-convex domains with compactly supported initial data. We generalize the well-posedness result to general space-time domains and allow non-compactly supported initial data. Second, we show that \eqref{eqn1} can be obtained as the limit as $\epsilon\rightarrow 0$ of the diffusion equation \eqref{eqn2} given with the co-normal boundary data, imposing the additional condition that the domain is bounded and spatially convex. This result is significant since it provides a natural justification for the first-order system \eqref{eqn1}.

%\medskip

%$\circ$ {\it Well-posedness of \eqref{eqn1}}

%\medskip

\subsection{First Main Result}

%Let us outline the proof for the well-posedness of \eqref{eqn1} with compactly supported initial data and illustrate the difficulties involving time-dependent domains.

As in \cite{prox}, we use particle approximations. %Take a sequence $\mu^k_0$ which converges to $\mu_0$ in 2-Wasserstein metric (see Preliminaries \ref{1.1} or readers can refer to \cite{old}) where $\mu^k_0=\sum_{i=1}^{N^k} m^k_i \delta_{x^k_i}$ is a sum of Dirac masses. The corresponding solutions $\mu^k(t)=\sum_{i=1}^{N^k} m^k_i \delta_{x^k_i(t)}$ to equation \eqref{eqn1} can be found by solving an ODE system. It is not hard to show that $\mu^k(\cdot)$ converges to some $\mu(\cdot)$ by stability properties of solutions to \eqref{eqn1}. 
The hard part is to show the limit of particle approximating solutions is indeed a weak solution to \eqref{eqn1} due to the fact that the projecting operator $ P_{x,t} (v)$ is discontinuous with respect to $x,t$ on the boundary. So instead we show that the limit is a gradient flow solution by taking limit of the ``curve of maximal slope" (see inequality \eqref{extra}) which is then a weak solution.

The novelty in this paper is that, comparing with \cite{prox}, for space-time domains an extra term ($\tilde{E}$ below or see \eqref{extra}) appears in the curve of maximal slope, 
%after direct computations of differentiating $\phi(\mu(t))$ in time:
\[{\mathcal{E}}(\mu):=
\int^t_s\int_{ \mathbb{R}^d }(\underbrace{\left(w- P_{x,r} w\right)\cdot P_{x,r} w}_{\tilde{E}} +\frac{1}{2}\left|P_{x,r} w\right|^2 ) (x,r)d\mu dr\]
where $w(\mu)(x,r):=-(\nabla V+\nabla W*\mu)(x,r)$. Intuitively this extra term $\tilde{E}$ comes from the moving boundary constraints and the possible situation that particles attempt to move out of the domain with potential velocity $w$ but end up moving with the boundary with velocity $ P_{x,t} w$. According to the definition of $ P_{x,t} $, it is not hard to see that $\tilde{{E}}(\mu)$ is only nontrivial if $\mu$ is singular with mass concentrating on the boundary. Alternatively if the domain is stationary, this term vanishes, since if the boundary speed $c(x,t)=0$, $ P_{x,t} $ at boundary points are projections onto the tangential plane of $(x,t)$.

We need more careful analysis because of this term. To be specific, the key point is to show the lower semi-continuity of $\mathcal{E}(\mu)$ in $\mu$. This can be proved if $\tilde{E}(w)+\frac{1}{2}|P_{x,t}w|^2 $ is convex in $w$ which is the case when we have expanding boundary. However it is not convex if the boundary speed is negative. This difficulty can be resolved by observing the two facts: the sequence $w^n$ we take limit of converges uniformly in $(x,t)$ if $W$ is $C^2$; $E(w)(x,t)$ is lower semi-continuous in $x,t$. These two also guarantee the lower semi-continuity of $\mathcal{E}$ in $\mu$ (see Lemma \ref{lemu}).

\medskip

The energy associated to \eqref{eqn1} contains potential energy and interaction energy:
\begin{equation}\label{phi}
\phi(\mu):=\int_{ \mathbb{R}^d }V(x)d\mu(x,t)+\frac{1}{2}\int_{\mathbb{R}^{2d}}W(x-y) d\mu(y,t)d \mu(x,t).
\end{equation}
Here $\mu(\cdot,t)$ will only be supported in $\overline{\Omega(t)}$.

For non-compactly supported initial data with exponential decay property (see condition \textsc{(R)} in section \ref{sub2.4}), existence of solutions can be done by particle approximation as well as truncation method. 
%The only difference is that we need to take one more step truncating the solutions by noticing that $\mu^k(t)$ decay exponentially fast for all $t\leq T$.
 Uniqueness of solutions satisfying exponential decay is proved by a stability estimate \eqref{stab_p}.
% for any $0<p<1,t\leq T$, there exists a constant $C$ such that
%\[d_W\left(\mu^1(t),\mu^2(t)\right)\leq C d_W(\mu^1_0,\mu^2_0)^p.\]
%The constant depends on $p,T$ and the decay of both $\mu_0^1,\mu_0^2$. 
We will provide examples in Theorem \ref{example} showing that the requirement below that $p<1$, as well as the exponential decay condition, are essential.
%We will also show that the $p<1$ cannot be improved to be $1$ unless the solutions are compactly supported. Indeed on non-convex domains, the stability of solutions is largely affected by the decay of the initial data at infinity as the examples in subsection \ref{example} show. 

\medskip

Now we summarize the main theorem in part one. We use $d_W(\cdot,\cdot)$ to denote the 2-Wasserstein distance between probability measures. The Wasserstein metric and the notion of weak solutions will be discussed in section \ref{1.1}.

% The relevant assumptions are stated in subsection \ref{sub2.1} and \ref{sub2.4}.

\begin{mainthm}  \label{A}
Assume conditions \textsc{(O1)(C1)-(C3)} hold (see section \ref{sub2.1} for details). Let $\mu_0$ be a probability measure supported in $\overline{\Omega(0)}$ and fix any $T>0$. 
\begin{flushleft}
(a) (Theorem \ref{thmu}) Suppose $\mu_0$ is compactly supported. Then there is a weak solution $\mu(\cdot)$ to equation \eqref{eqn1} and $\mu(t)$ is compactly supported for $0\leq t\leq T$. If $\mu^1(\cdot), \mu^2(\cdot)$ are two solutions with compact support in $[0,T]$, then there exists $C$ such that
\[d_W\left(\mu^1(t),\mu^2(t)\right)\leq C d_W(\mu^1_0,\mu^2_0)\text{ for all } 0\leq t\leq T.\]

(b) (Theorem \ref{converge})
Suppose $\mu_0$ satisfies the exponentially decay property \textsc{(R)}. Then there exists a weak solution $\mu(\cdot)$ of equation \eqref{eqn1} and $\mu(t)$ satisfies \textsc{(R)} for $0\leq t\leq T$. If $\mu^1(\cdot), \mu^2(\cdot)$ are two solutions with initial data $\mu^1_0,\mu^2_0$ and $\mu^1(\cdot), \mu^2(\cdot)$ satisfy \textsc{(R)} in $[0,T]$, then for any $0<p<1$, if $t$ and $d_W(\mu^1_0,\mu^2_0)$ are small enough, we have
\begin{equation}\label{stab_p}
d_W\left(\mu^1(t),\mu^2(t)\right)\leq 2d_W(\mu^1_0,\mu^2_0)^p .
\end{equation}

(c) (Theorem \ref{example}) For non-convex unbounded domains, examples can be found that the $``p"$ above cannot be improved to $1$. Furthermore for $\mu_0$ with less decay, even the above stability property does not hold.
\end{flushleft}

\end{mainthm}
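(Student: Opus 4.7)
The plan for part (a) is to follow the particle-approximation strategy of \cite{prox}. Approximate $\mu_0$ by empirical measures $\mu_0^N=\frac{1}{N}\sum_{i=1}^N \delta_{x_i^0}$ with $d_W(\mu_0^N,\mu_0)\to 0$, and let each particle evolve by
\[
\dot x_i(t)= P_{x_i(t),t}\left(-\nabla V(x_i)-\tfrac{1}{N}\sum_{j\neq i}\nabla W(x_i-x_j)\right),
\]
interpreted in a differential-inclusion sense because $ P_{x,t}$ is discontinuous on $\partial\Omega(t)$. The empirical curves $\mu^N(t)$ remain in $\overline{\Omega(t)}$ by construction, and $\lambda$-convexity of $V,W$ together with assumption (O1) yields uniform-in-$N$ compact support and a uniform bound on the energy $\phi(\mu^N(t))$. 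Standard Ascoli-type compactness in the Wasserstein space extracts a narrowly convergent subsequence $\mu^N\to\mu$. The main task is then to show that $\mu$ satisfies the curve-of-maximal-slope inequality \eqref{extra}, from which the Ambrosio--Gigli--Savar\'e framework promotes it to a weak solution of \eqref{eqn1}.

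The crux of (a), and the departure from \cite{prox}, is the lower semicontinuity of the dissipation $\mathcal{E}$ containing the extra boundary term $\tilde{E}$. The integrand $\tilde E(w)+\tfrac12|P_{x,t}w|^2$ is convex in $w$ only when the boundary speed $c\geq 0$; for $c<0$ it is not, so the usual Ioffe/lsc-of-convex-integrals argument fails. To circumvent this I would exploit two facts: first, since $W\in C^2$ and $\mu^N\to \mu$ narrowly, $w^N=-\nabla V-\nabla W*\mu^N$ converges to $w(\mu)$ uniformly on compact space-time sets; second, $(x,t)\mapsto \tilde{E}(w)(x,t)$ is lower semicontinuous in $(x,t)$ because the only jumps of $ P_{x,t}$ occur on $\partial\Omega_T$ and correspond to boundary mass. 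Feeding these two ingredients into Lemma \ref{lemu} yields the needed semicontinuity of $\mathcal{E}$ and closes existence. For the stability half of (a), couple two compactly supported solutions through an optimal plan $\pi(t)$ and differentiate $d_W^2(\mu^1(t),\mu^2(t))$ along the flow: $\lambda$-convexity of $V+W*\mu$ plus nonexpansivity of $ P_{x,t}$ onto the admissible half-space gives a Gronwall inequality, and compactness of the supports bounds the velocity fields uniformly.

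For part (b), I would approximate $\mu_0$ by the compactly supported probability measures $\mu_0^R$ obtained by restricting to $B_R$ and renormalizing, apply (a) to produce $\mu^R(t)$, and propagate condition (R) uniformly in $R$ via a second-moment estimate combined with the growth control on $v$ coming from $\lambda$-convexity of $V,W$. Prokhorov plus the uniform tail bound produces a narrow limit $\mu(t)$, which passes to the limit in the weak formulation as a solution of \eqref{eqn1}. For the fractional stability \eqref{stab_p}, split the optimal plan between $\mu^1(t)$ and $\mu^2(t)$ into its restriction to $B_R\times B_R$ and its tail: on the interior part, Lipschitzness of $w$ on bounded sets gives stability with a constant that grows with $R$, while the tail is bounded by the exponential decay of (R). Optimizing the radius $R=R(d_W(\mu^1_0,\mu^2_0))$ trades one against the other and produces the exponent $p<1$; the inability to take $p=1$ is exactly the price of this balance.

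Finally, for part (c) I would construct explicit counterexamples in unbounded non-convex domains by using a thin ``neck'' or cusp extending to infinity, so that an $O(\varepsilon)$ shift in $\mu_0$ routes an $O(1)$ mass into a different branch of $\Omega(t)$ at later times. With tails of $\mu_0$ that are polynomial rather than exponential, one can place this re-routed mass arbitrarily far out, producing an $O(1)$ lower bound on $d_W(\mu^1(t),\mu^2(t))$ from an arbitrarily small initial distance, thereby ruling out $p=1$ under (R) and ruling out any fractional stability under weaker decay. I expect the main obstacle of the full theorem to be the lower semicontinuity of $\mathcal{E}$ under contracting boundary in part (a); the remaining arguments are careful adaptations of the gradient-flow machinery and Wasserstein coupling estimates.
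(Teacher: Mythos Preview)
Your outline for parts (a) and (b) is essentially the paper's approach, with a few differences worth flagging.

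For (a), the paper does not extract a subsequence via Ascoli; instead it first proves the stability estimate at the \emph{discrete} level (Proposition~\ref{thm2}) and uses it to show that the particle solutions $\mu^n(\cdot)$ form a Cauchy sequence in $d_W$, giving convergence of the whole sequence. This is cleaner because it avoids having to identify subsequential limits. Also, your phrase ``nonexpansivity of $P_{x,t}$ onto the admissible half-space'' is not quite the mechanism: the paper writes $v^i=w^i-k^i n$ and uses $r_p$-prox-regularity to bound $\langle k^i n(x),y-x\rangle\le \frac{|k^i|}{r_p}|y-x|^2$, which together with $|k^i|\le C$ on the compact support yields the Gronwall term. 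Nonexpansivity alone (which compares two velocities at the \emph{same} point) would not control the cross term between different base points. Your identification of the lower semicontinuity of $\mathcal{E}$ as the main obstacle, and of the two-step fix (uniform convergence of $w^n$ from $W\in C^2$, plus lsc of $E(w)(x,t)$ in $(x,t)$), matches Lemma~\ref{lemu} exactly.

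For (b), the paper again works with particle approximations rather than truncation/renormalization, but mentions truncation as an alternative, and your splitting-and-optimizing-$R$ argument for the fractional stability is precisely what the paper does: one derives $\frac{d}{dt}d_W^2\le CL\,d_W^2+C\delta(L,\mu_0)$ and chooses $L\sim -\log d_W(\mu_0^1,\mu_0^2)$, so that condition \textsc{(R)} kills the tail term.

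For (c), your sketch is too vague and does not match the paper's mechanism. The paper's example is a \emph{stationary} domain $\Omega=\{y\ge\cos(2\pi x)\}\subset\mathbb{R}^2$ with $V=xy$ and $W=0$; there is no branching neck. The point is that the projected dynamics on the wavy boundary is a one-dimensional autonomous ODE with, near each large integer $N$, a stable and an unstable equilibrium. A delta mass placed exactly at $(j,1)$ and one placed at $(j+j^{-\alpha},\cos(2\pi(j+j^{-\alpha})))$ flow to equilibria on opposite sides, producing an $O(1)$ separation from an initial $O(j^{-\alpha})$ shift. Summing over $j$ with weights $m_j\sim e^{-j}$ gives the example under \textsc{(R)} showing $p=1$ fails; taking $m_j\sim j^{-\beta}$ (which violates \textsc{(R)}) and tuning $\alpha,\beta$ shows any $p>\tfrac12$ fails. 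Your ``re-route $O(1)$ mass into a different branch'' picture would need a concrete domain and potential to make precise, and it is not obvious a cusp geometry produces the right quantitative trade-off; the paper's bifurcation-via-unstable-equilibria construction is the actual idea.
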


%\medskip

%$\circ$ {\it Vanishing viscosity limit of \eqref{eqn1}}
%\medskip
\subsection{Second Main Result}

In the second part, we consider the case $\epsilon>0$:
\begin{equation}\label{eqn2}
\left\{\begin{aligned}&
\frac{\partial}{\partial t}\mu^\epsilon-\nabla\cdot ( \epsilon\nabla \mu^\epsilon+\nabla V \mu^\epsilon+(\nabla W *\mu^\epsilon)\mu^\epsilon )= 0 & \text{ in }&\Omega_T,\\
&\left(\epsilon\nabla \mu^\epsilon+\nabla V \mu^\epsilon+(\nabla W *\mu^\epsilon)\mu^\epsilon + c\mu^\epsilon\right)\cdot n=0 &
\text{ on }& \partial_l\Omega_T,\\
&\mu^\epsilon(x, 0)=\mu_0(x) & \text{ on }&  \Omega(0).
\end{aligned}
\right.
\end{equation}
Here $\partial_l\Omega_T:=\overline{\Omega}_T\backslash{\Omega}_T$ is the lateral boundary of $\Omega_T$; $c$ is the speed of the boundary.
The co-normal boundary condition above gives the mass preservation. The associated energy $\phi^\epsilon$ is given by % Comparing to  \eqref{eqn1}, it is given as sums of internal, potential and interaction energy:
\begin{equation} \label{phid}
    \begin{aligned}
\phi^\epsilon(\mu)&=\mathcal{U}^\epsilon(\mu)+\mathcal{V}(\mu)+\mathcal{W}(\mu)\\
&:=\epsilon\int_{\mathbb{R}^d}(u\log u)(x,t) dx+\int_{\mathbb{R}^d}V(x)d\mu(x,t)+\frac{1}{2}\int_{\mathbb{R}^{2d}}W(x-y) d\mu(y,t)d \mu(x,t)  .
\end{aligned}
\end{equation}
In the first term, $u$ is the probability density of $\mu$ if $\mu$ is absolutely continuous with respect to Euclidean measure. We set $\phi^\epsilon(\mu)=\infty$ if $\mu$ is not absolutely continuous.

\medskip

With the convergence of $\epsilon\rightarrow 0$ in mind, we show the existence of solutions by discrete-time gradient-flow (JKO) solutions (see \cite{JKO}). For this purpose, technically we further require $V,W$ to be $C^2$ and bounded below. In time-dependent domain, the scheme is slightly different from the standard version that we minimize each movement among probability measures with support contained in $\Omega_T$ (see \eqref{JKOscheme}).
%For JKO scheme and gradient flow theory in spaces of probability measures, we refer readers to Ambrosio, Gigli, Savare's book \cite{gflow}. We only mention two results which are closely related to problem \eqref{eqn2}.
%Petrelli and Tudorascu \cite{la} studied the general diffusion problems using JKO scheme on bounded convex stationary domains. Marino and Maury worked on measure sweeping process on time-dependent convex domain in \cite{sweeping} where equation \eqref{eqn2} is solved when $V=W=0$.
To obtain the continuum time limit of the discrete-time solutions, we show the uniform boundedness of the second moment and the boundedness of $\phi^\epsilon$ along solutions from the discrete scheme. This is one part that the analysis for problems on stationary domains that cannot be directly carried over for the time-dependent domains. The problem is solved in Proposition \ref{prop 2}, the proof of which is inspired by the work of Di Marino, Maury and Santambrogio \cite{sweeping} who encountered the same problem. Also let us mention that solutions obtained in this way inherit the gradient flow structure which will be important later.

By a gradient flow argument, we have uniqueness of solutions in bounded and spatially convex moving domains, see Remark \ref{remarkunique}. For non-convex bounded stationary domain, we give a uniqueness proof based on an $L^2$ stability estimate, see Theorem \ref{uniqueness}.

\medskip

After establishing the well-posedness of weak solutions, we send $\epsilon\rightarrow 0$. 
It will be proved that if the domain is bounded and spatially convex, equations \eqref{eqn2} are indeed the vanishing viscosity approximation of the first order equation \eqref{eqn1} in the first part. This convergence justifies the formulation of equation \eqref{eqn1}, in addition to the derivation via particle system. 

We use a Gronwall type argument. By the gradient flow theory (mainly Sections 8,10,11 \cite{gflow}), the time derivative of the 2-Wasserstein distance between $\mu$ and $\mu^\epsilon$ is related to the Fr\'echet subdifferentials of their energy $\phi,\phi^\epsilon$ at $\mu,\mu^\epsilon$ respectively. We want to use the convexity of the energies to finish the argument and to do so we also need to consider $\phi^\epsilon(\mu)$. A serious problem arises that the value of $\mathcal{U}^\epsilon$ at $\mu$ can be infinity. This is because, in general even with smooth initial data, $\mu$ can concentrate mass in finite time as discussed in \cite{carrillo}. 

To overcome this problem, we develop a new modification method. We select a $\tilde{\mu}=\tilde{u}dx$ which is close to $\mu$, and $\tilde{u}$ is bounded point-wisely by $\epsilon^{-\alpha}$ for some $0<\alpha<1$. Using that the domain is bounded, we obtain $\mathcal{U}^\epsilon(\tilde{\mu})\rightarrow 0$ as $\epsilon\rightarrow 0$. Then by the variational inequality \eqref{phiexi}, it turns out that we need $\mu,\tilde{\mu}$ to be close not only in 2-Wasserstein metric but also in Pseudo-Wasserstein distance with base $\mu^\epsilon$ (the definition will be given in section \ref{1.1}). Finding such a modification of $\mu$ is one of the most technical part of this paper since the information we have about the base $\mu^\epsilon$ at each time is limited. Simple convolution with a bump function won't give the expected $\tilde{\mu}$, see Appendix \ref{counterexample}. The modification is done for general absolutely continuous base measure in Lemma \ref{modi}. We need the convexity of the domain and we will use generalized geodesics in probability measure space with Pseudo-Wasserstein metric and Brunn-Minkowski inequality.

\medskip

Now let us give the main theorem of the second part of this paper.

\begin{mainthm}\label{mainthmB}
Assume conditions \textsc{(C1)-(C4)(O1)(O2)} hold (see sections \ref{sub2.1}, \ref{sub3.1} for details), $\mu_0$ is absolutely continuous (with respect to Lebesgue measure) probability measures supported in $\Omega(0)$ with finite second moments, and $\phi^\epsilon(\mu_0)<\infty$ for some $\epsilon>0$. Then for any fixed $T>0$
\begin{flushleft}
(a) (Theorem \ref{existence})
There exists a weak solution $\mu^\epsilon(\cdot)$ to equation \eqref{eqn2} and for each $t\geq 0$, $\mu^\epsilon(t)$ is absolutely continuous with respect to Lebesgue measure.\\

%(b) (Theorem \ref{remarkunique}, \ref{uniqueness}) Suppose $\Omega(t)$ is bounded and is either convex for all $t$ or stationary. Solutions are unique.

(b) (Theorem \ref{convergence})
Suppose $\Omega(t)$ is bounded and convex for all $t\in [0,T]$. Let $\mu^\epsilon(\cdot)$ be the weak solution to equation \eqref{eqn2} and $\mu(\cdot)$ be the weak solution to equation \eqref{eqn1} with the same initial data. Then there exist constants $c,C$ that 
\[d^2_W(\mu^\epsilon(t),\mu(t)) \leq C\epsilon^\frac{1}{d+2}te^{ct}  \text{ for all }t\in[0,T].\]
\end{flushleft}
\end{mainthm}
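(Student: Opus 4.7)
My plan is to produce $\mu^\epsilon$ via a JKO scheme adapted to the moving domain. Fix a time step $\tau>0$ and set $\mu^\epsilon_0=\mu_0$. Having defined $\mu^\epsilon_k$ supported in $\overline{\Omega(k\tau)}$, define $\mu^\epsilon_{k+1}$ as the minimizer of
\[
\nu\mapsto \frac{1}{2\tau}d_W^2(\mu^\epsilon_k,\nu)+\phi^\epsilon(\nu)
\]
over absolutely continuous probability measures supported in $\overline{\Omega((k+1)\tau)}$, as indicated in \eqref{JKOscheme}. Standard direct-method arguments give existence and absolute continuity of the minimizer. Then I would form the piecewise-constant interpolation $\mu^\epsilon_\tau(t)$, derive the usual JKO one-step Euler–Lagrange system (which, after summation by parts, captures the co-normal boundary condition in \eqref{eqn2}), and pass to the limit $\tau\to 0$.

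The step that genuinely differs from the stationary-domain case, and the main obstacle here, is uniform control of the second moment and of $\phi^\epsilon(\mu^\epsilon_\tau(t))$: because the confining set shrinks/grows in time, minimizers may be pushed in a way that threatens monotonicity of $\phi^\epsilon$. Following the idea invoked from Di Marino--Maury--Santambrogio, I would build an auxiliary competitor by pushing $\mu^\epsilon_k$ forward through a short-time flow that keeps mass inside $\Omega((k+1)\tau)$ while losing only $O(\tau)$ in energy; comparing $\mu^\epsilon_{k+1}$ to this competitor yields a telescoping bound of the form $\phi^\epsilon(\mu^\epsilon_{k+1})\le\phi^\epsilon(\mu^\epsilon_k)+C\tau$. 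This is Proposition \ref{prop 2}. Combined with $\lambda$-convexity of $V,W$, this controls second moments uniformly. Weak-$*$ compactness then produces a limit $\mu^\epsilon$ satisfying \eqref{eqn2} in the weak sense; absolute continuity for each $t$ follows because $\mathcal{U}^\epsilon(\mu^\epsilon(t))<\infty$ is preserved in the limit.

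\textbf{Plan for part (b).} The strategy is a Gr\"onwall inequality in $d_W^2(\mu^\epsilon(t),\mu(t))$ driven by the Fr\'echet subdifferentials of $\phi^\epsilon$ and $\phi$. By the abstract gradient-flow machinery of \cite{gflow} Sections 8, 10, 11, one can differentiate $t\mapsto \tfrac12 d_W^2(\mu^\epsilon(t),\mu(t))$ and bound it by a sum of two Evolution Variational Inequality (EVI) contributions evaluated along optimal transport plans, plus cross terms coming from the projection $P_{x,t}$. The $\lambda$-convexity of $V+W\ast$ along Wasserstein geodesics supplies the negative contractive term; the task is to control the remaining error by $\epsilon$.

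The main obstacle, flagged explicitly in the introduction, is that $\phi^\epsilon(\mu(t))$ is in general infinite because $\mu(t)$ need not be absolutely continuous. I would therefore replace $\mu(t)$ with a smoothed $\tilde\mu$ of density $\tilde u\le \epsilon^{-\alpha}$ for some $0<\alpha<1$ chosen at the end. For such $\tilde\mu$, the entropy bound $\mathcal{U}^\epsilon(\tilde\mu)=\epsilon\!\int \tilde u\log\tilde u\le C\alpha\epsilon\log(1/\epsilon)\to 0$ uses boundedness of the domain. Plugging $\tilde\mu$ into the variational inequality \eqref{phiexi} for $\phi^\epsilon$ at $\mu^\epsilon$ produces an error controlled by the Pseudo-Wasserstein distance between $\mu$ and $\tilde\mu$ with base $\mu^\epsilon$, not just by $d_W(\mu,\tilde\mu)$. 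Constructing such a $\tilde\mu$ (Lemma \ref{modi}) is the technical heart: simple mollification of $\mu$ fails (Appendix \ref{counterexample}). Instead I would, for each optimal transport map $T$ from $\mu^\epsilon$ to $\mu$, perturb $T$ by small translations of size $\delta$ and average, obtaining $\tilde u\le C\delta^{-d}$; convexity of $\Omega(t)$ keeps the perturbed image inside $\Omega(t)$, and Brunn--Minkowski controls the density of the resulting generalized-geodesic interpolation. This keeps the Pseudo-Wasserstein cost of order $\delta$ and $d_W(\mu,\tilde\mu)\lesssim \delta$.

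\textbf{Assembling.} Setting $\delta=\epsilon^{\alpha/d}$ matches the entropy and modification errors. Plugging the modified EVI into the Gr\"onwall argument yields
\[
\tfrac{d}{dt}\tfrac12 d_W^2(\mu^\epsilon,\mu)\le c\, d_W^2(\mu^\epsilon,\mu)+C\bigl(\epsilon^{1-\alpha}+\epsilon^{\alpha/d}\bigr),
\]
and optimizing over $\alpha$ gives $\alpha=d/(d+2)$ and an error of order $\epsilon^{1/(d+2)}$; Gr\"onwall then yields the stated bound $d_W^2(\mu^\epsilon(t),\mu(t))\le C\epsilon^{1/(d+2)}te^{ct}$. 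The steps I expect to require the most care are (i) checking that the construction of $\tilde\mu$ in Lemma \ref{modi} indeed produces simultaneous $d_W$ and Pseudo-Wasserstein closeness, and (ii) handling the $P_{x,t}$ cross term in the EVI for $\mu$ on a moving convex domain, where I would exploit that $P_{x,t}w$ coincides with $w$ off the singular part of $\mu$ on $\partial\Omega(t)$.
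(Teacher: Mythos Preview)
Your plan for part (a) is correct and follows the paper's approach closely: the JKO scheme in the moving domain, the competitor built by pushing $\mu^\epsilon_k$ into $\Omega((k+1)\tau)$ via a short-range Lipschitz map (Proposition \ref{v vs u}), the telescoping energy/moment bound (Proposition \ref{prop 2}), and passage to the limit are exactly what the paper does.

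For part (b), your overall architecture---Gr\"onwall on $d_W^2$, variational inequalities for $\phi^\epsilon$ at $\mu^\epsilon$ and for $\phi$ at $\mu$, and a density-capped surrogate $\tilde\mu$ for $\mu$---matches the paper. However, your description of the construction of $\tilde\mu$ has a genuine gap. ``Perturb $T=\textbf{t}^\mu_{\mu^\epsilon}$ by small translations of size $\delta$ and average'' is, literally, convolution of $\mu$ with the uniform measure on $B_\delta$: $(T+z)_\#\mu^\epsilon$ is the translate of $\mu$ by $z$, and averaging over $z$ mollifies $\mu$. This does bound the density by $C\delta^{-d}$, but it does \emph{not} keep the Pseudo-Wasserstein distance $d_{\mu^\epsilon}(\mu,\tilde\mu)$ small. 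The very counterexample you cite (Appendix \ref{counterexample}) shows that a small shift of $\mu$ can sit at $O(1)$ Pseudo-Wasserstein distance from $\mu$ with base $\mu^\epsilon$; averaging such shifts cannot repair this. So the step ``this keeps the Pseudo-Wasserstein cost of order $\delta$'' fails for that construction.

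The paper's Lemma \ref{modi} builds $\tilde\mu$ differently: it takes the \emph{generalized geodesic with base $\mu^\epsilon$} from $\mu$ to the Lebesgue (uniform) measure $e$ on $\Omega(t)$, namely $\tilde\mu=\mu_s:=\big((1-s)\,\textbf{t}^{\mu}_{\mu^\epsilon}+s\,\textbf{t}^{e}_{\mu^\epsilon}\big)_\#\mu^\epsilon$. The Pseudo-Wasserstein bound is then immediate from the constant-speed property, $d_{\mu^\epsilon}(\mu,\mu_s)=s\,d_{\mu^\epsilon}(\mu,e)\le Cs$, with no analysis required. The density bound $\le s^{-d}$ comes from Brunn--Minkowski applied to $\big((1-s)\textbf{t}^{\mu}_{\mu^\epsilon}+s\,\textbf{t}^{e}_{\mu^\epsilon}\big)^{-1}B_r$, using that $\textbf{t}^{e}_{\mu^\epsilon}$ is measure-preserving onto Lebesgue. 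Convexity of $\Omega(t)$ is used only to keep $\mu_s$ supported in $\Omega(t)$. You mention generalized geodesics and Brunn--Minkowski, so you are close; the fix is to interpolate toward $e$ rather than to translate.

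A smaller point: your entropy estimate $\mathcal{U}^\epsilon(\tilde\mu)\sim\epsilon^{1-\alpha}$ is not right. With $\tilde u\le \epsilon^{-\alpha}$ one gets $\int \tilde u\log\tilde u\le \alpha\log(1/\epsilon)$, hence $\mathcal{U}^\epsilon(\tilde\mu)\le C\epsilon\log(1/\epsilon)$, which is $o(\epsilon^{\beta})$ for every $\beta<1$. Consequently the entropy term never binds; the rate is set entirely by the Pseudo-Wasserstein error $Cs$. The paper simply takes $s=\epsilon^{1/(d+2)}$ (and remarks that any exponent $<\tfrac{1}{d+1}$ works), so your balance $1-\alpha=\alpha/d$ is not the mechanism producing $\epsilon^{1/(d+2)}$.
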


\medskip

Lastly let us mention that in \cite{cozzi2016aggregation}, the vanishing viscosity limit problem in the whole domain was studied in the case when $V = 0$ and $-W$ is the Newtonian potential. Their proof heavily relies on the specific choice of kernel $W$, and also the fact that the domain is $\mathbb{R}^d$ which eliminates the task of determining the limiting boundary condition. 

\leavevmode
\medskip

\textbf{Acknowledgements.}
The author would like to thank his advisor Inwon Kim for suggesting the problem, which was motivated from a conversation with Jos\'e Carrillo, as well as for the stimulating guidance and discussions. The author would like to thank Katy Craig and Wilfrid Gangbo for fruitful discussions and thank Alp\'ar Rich\'ard M\'esz\'aros for the helpful conversations including discussing Brunn-Minkowski theorem. Also, reference \cite{cozzi2016aggregation} was pointed out by Katy Craig.
The author would like to thank the referee for a careful reading of the manuscript and for her/his constructive comments.

\section{Notations and Preliminaries}\label{1.1}

Suppose $\cup_t \partial\Omega(t)$ is smooth.
For $x\in\partial\Omega(t)$, we write $n(x,t)$ (or simply $n$) as the unit spatial outer normal vector and $c(x,t)$ (or $c$) as the speed of the boundary at $(x,t)$. They are defined such that, if letting
\[ y_s\in argmin_{y_s\in\partial\Omega(t+s)}|y_s-x|,\]
then
\[c(x,t)n(x,t)=\lim_{s\to 0}\frac{y_s-x}{s}.\]

Throughout the paper, we fix a time $T>0$ which is assumed to be large. We say a constant is universal if it only depends on $T$ and constants in conditions \textsc{(O1)(O2)(C1)-(C4)} ($\lambda,r_p, L$ and bounds about $c, V, W$). We denote by $C$ a constant which may depend on universal constants and $\mu_0$, possibly changing from one estimate to another. 

A spatial ball in $\mathbb{R}^d$ centered at $x$ with radius $r$ is denoted by $B_r(x)$, and we may simply write $B_r$ if $x$ is the origin. Given $S\subseteq \mathbb{R}^d$, we use the notation $vol\{S\}$ as the Lebesgue measure of $S$.

%By $\nabla,\Delta$ we mean taking gradient and laplacian in space.

Given a probability measure $\mu$, we write $m_2(\mu)=\int_{\mathbb{R}^d} |x|^2 d\mu$ as the second moment of $\mu$.
The set of all probability 
measures on $\overline{\Omega}$ with 
finite second moment is denoted by $\mathcal{P}_2(\overline{\Omega})$. The set of absolutely continuous (with respect to Lebesgue measure) probability measures with finite second moment is written as $\mathcal{P}_2^a$. For $\mu\in\mathcal{P}_2^a$, we usually write $\mu=u\mathcal{L}^d$ where $u$ is its density.
For probability measures supported in $\overline{\Omega}$, we will think of them as measures in $ \mathbb{R}^d $, extended by $0$ outside $\overline{\Omega}$. 

%$\circ$ {\it Weak measure solutions}

\medskip

Now we give the definition of weak (measure) solutions to equations \eqref{eqn1} and \eqref{eqn2}.
\begin{definition}
Assume $\mu_0\in \mathcal{P}_2(\overline{\Omega(0)})$. A locally absolutely continuous (in Wasserstein metric) curve $\mu(\cdot)\in\mathcal{P}_2(\mathbb{R}^d)$ is a weak solution to \eqref{eqn1} for $t\in [0,T]$ if
\[v= P_{x,t} (-\nabla V-\nabla W*\mu) \in L^1_{loc}([0,T); L^2(\mu(t)))\]
and for all $\varphi\in C_c^\infty\left( \mathbb{R}^d \times (0,T)\right) $:
\[\int_{ \mathbb{R}^d \times (0,T)}\varphi_t d\mu dt+\int_{ \mathbb{R}^d \times (0,T)}(\nabla\varphi\cdot v) d\mu dt=0,\quad \mu(0)=\mu_0,\]
and for all $t\in [0,T]$, $supp(\mu(t))\subseteq \overline{\Omega(t)}.$
\end{definition}

\begin{definition}
Assume $\mu_0\in \mathcal{P}^a_2({\Omega(0)})$. A locally absolutely continuous curve $\mu(\cdot)\in\mathcal{P}^a_2(\mathbb{R}^d)$ is a weak solution to \eqref{eqn2} for $t\in[0,T]$ if the density $u(\cdot)$ of $\mu(\cdot)$ satisfies
\[\nabla u \in L^{1}(\Omega_T)\]
and
for all $\varphi\in C_c^\infty\left( \mathbb{R}^d \times (0,T)\right) $:
\[\int_{ \mathbb{R}^d \times (0,T)}\left(\varphi_t-\nabla V\cdot\nabla\varphi-(\nabla W*\mu)\cdot \nabla\varphi\right) d\mu dt-\int_{ \Omega_T} \epsilon{\nabla u}\cdot\nabla\varphi\, dxdt =0,\quad \mu(0)=\mu_0,\]
and for all $t\in[0,T]$, 
$supp(\mu(t))\subseteq {\Omega(t)}.$
\end{definition}

\medskip

Now we discuss the Wasserstein metric and we refer readers to \cite{gflow} for details.
Suppose $X,Y$ are measurable subsets of $ \mathbb{R}^d $ and $\mu_1\in \mathcal{P}_2(X), \mu_2\in \mathcal{P}_2(Y)$. A plan between $\mu_1,\mu_2$ is any Borel measure $\gamma$ on ${ X\times Y }$ which has $\mu_1$ as its first marginal and $\mu_2$ as its second marginal. We write $\gamma\in\Gamma(\mu_1,\mu_2)$. It has been shown that there exists an optimal transport plan $\gamma\in \Gamma(\mu_1,\mu_2)$ such that
\[\int_{ X \times  Y }|x-y|^2d\gamma(x,y)=\min\left\{\int_{ X \times  Y }|x-y|^2d\gamma'(x,y),\gamma'\in\Gamma(\mu_1,\mu_2)\right\}. \]
The above quantity is defined to be the \textit{2-Wasserstein distance} between $\mu_1,\mu_2$ (the Kantorovich's formulation). Throughout this paper we use this distance for probability measures with notation $d_W(\cdot,\cdot)$ unless otherwise stated. And later by Wasserstein distance (metric) we mean 2-Wasserstein distance (metric).
We denote the set of optimal transport plans between $\mu_1$ and $\mu_2$ by $\Gamma_0(\mu_1,\mu_2)$.

Let $\mu_2\in \mathcal{P}_2(Y)$, a measurable function $\textbf{t}: Y\rightarrow X$ transports $\mu_2$ onto $\mu_1\in\mathcal{P}_2(X)$ if $\mu_1(B)=\mu_2(\textbf{t}^{-1}(B))$ for all measurable $B\subseteq X$, and we write $\mu_1=\textbf{t}_\# \mu_2$.
If $\mu_2\in \mathcal{P}^a_2(Y)$, then for any $\mu_1\in \mathcal{P}_2(X)$ there is an optimal transport map $\textbf{t}_{\mu_2}^{\mu_1}: Y\rightarrow X$ such that ${\textbf{t}_{\mu_2}^{\mu_1}}_\# \mu_2=\mu_1$ (With reference to \cite{mccann1995}). And we have, in Monge's formulation,
\[d_W^2(\mu_1,\mu_2)=\int_Y |\textbf{t}_{\mu_2}^{\mu_1}(x)-x|^2d \mu_2(x).\]

Given ${\mu_1},{\mu_2}\in\mathcal{P}_2(X),\mu\in\mathcal{P}^a_2(X)$. Let $\textbf{t}_\mu^{\mu_1},\textbf{t}_\mu^{\mu_2}$ be an optimal transport maps from $\mu$ to $\mu_1$ and $\mu_2$
respectively. Then the \textit{Pseudo-Wasserstein distance} with base $\mu$ is defined as
\[d_\mu^2(\mu_1,\mu_2)=\int_{X}|\textbf{t}_\mu^{\mu_1}-\textbf{t}_\mu^{\mu_2}|^2d\mu.\]
By Proposition 1.15 \cite{katy}, $d_\mu$ is a metric on \[\mathcal{P}_\mu(X):=\left\{\mu' \in \text{probability measures on } X, d_W(\mu,\mu')<+\infty \right\}.\]
And we have for any $\mu$,
$d_W(\cdot,\cdot)\leq d_\mu(\cdot,\cdot).$

\medskip

Finally let us recall the Brunn-Minkowski theorem.

\begin{lemma}\label{minkowski}
Let $d \geq 1$ and let $A$ and $B$ be two nonempty compact subsets of $\mathbb{R}^d$. Then the following inequality holds:
\[ vol\{A+B\}^{1/d}\geq vol\{ A\}^{1/d}+vol\{ B\}^{1/d},\]
where
$A+B$ denotes the Minkowski sum:
\[
A+B:=\{\,a+b\in \mathbb {R}^{d} \,|\, a\in A,\, b\in B\,\}.\]
\end{lemma}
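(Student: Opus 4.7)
The plan is the classical three-step route: prove the inequality first for axis-aligned boxes by AM--GM, extend it to finite disjoint unions of open boxes by induction on the total number of boxes, and then pass to general nonempty compact sets by inner approximation. I deliberately avoid invoking heavier machinery like Pr\'ekopa--Leindler, since this lemma is used only as a black box in Lemma \ref{modi}.

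First I would treat the box case. If $A=\prod_{i=1}^d[0,a_i]$ and $B=\prod_{i=1}^d[0,b_i]$, then $A+B=\prod_{i=1}^d[0,a_i+b_i]$, and the claim reduces to
\[\Big(\prod_{i=1}^d(a_i+b_i)\Big)^{1/d}\geq \Big(\prod_{i=1}^d a_i\Big)^{1/d}+\Big(\prod_{i=1}^d b_i\Big)^{1/d}.\]
Dividing both sides by the left-hand side and applying AM--GM to each of the two resulting geometric means yields
\[\prod_{i=1}^d\Big(\tfrac{a_i}{a_i+b_i}\Big)^{1/d}+\prod_{i=1}^d\Big(\tfrac{b_i}{a_i+b_i}\Big)^{1/d}\leq \tfrac{1}{d}\sum_{i=1}^d\tfrac{a_i}{a_i+b_i}+\tfrac{1}{d}\sum_{i=1}^d\tfrac{b_i}{a_i+b_i}=1,\]
which is exactly the required estimate.

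For the induction step, suppose $A,B$ are finite disjoint unions of open boxes with at least three boxes in total. Translate coordinates so that a hyperplane $H=\{x_1=0\}$ separates at least one box of $A$ from another; this produces $A=A_+\cup A_-$ with strictly fewer boxes on each side. Translate $B$ along the $x_1$-axis so that the same hyperplane splits $B=B_+\cup B_-$ with $\alpha_\pm:=vol\{A_\pm\}/vol\{A\}=vol\{B_\pm\}/vol\{B\}$, which is possible by continuity. Since $A_\pm+B_\pm$ lie in disjoint open half-spaces, the inductive hypothesis gives
\begin{align*}
vol\{A+B\}&\geq vol\{A_++B_+\}+vol\{A_-+B_-\}\\
&\geq \sum_{\sigma\in\{+,-\}}\bigl(vol\{A_\sigma\}^{1/d}+vol\{B_\sigma\}^{1/d}\bigr)^d\\
&=(\alpha_++\alpha_-)\bigl(vol\{A\}^{1/d}+vol\{B\}^{1/d}\bigr)^d,
\end{align*}
where the last line uses $vol\{A_\sigma\}^{1/d}+vol\{B_\sigma\}^{1/d}=\alpha_\sigma^{1/d}\bigl(vol\{A\}^{1/d}+vol\{B\}^{1/d}\bigr)$ together with $\alpha_++\alpha_-=1$. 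For general nonempty compact $A,B$, inner-approximate by finite unions of half-open dyadic cubes $A_n,B_n$; since $A_n+B_n\subseteq A+B$, taking $n\to\infty$ transfers the bound to the limit.

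The main obstacle is the bookkeeping in the induction step: one must verify that a separating hyperplane can always be placed so that both $A_+$ and $A_-$ contain strictly fewer boxes than $A$, and that a single one-parameter translation of $B$ suffices to match the volume ratios $\alpha_\pm$. Both points are standard but must be arranged carefully so that the scaling identity above and the relation $\alpha_++\alpha_-=1$ cause the two applications of the induction hypothesis to telescope to exactly the target constant $(vol\{A\}^{1/d}+vol\{B\}^{1/d})^d$.
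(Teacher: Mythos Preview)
The paper does not actually prove Lemma~\ref{minkowski}; it is merely recalled as the classical Brunn--Minkowski theorem (see the sentence ``Finally let us recall the Brunn--Minkowski theorem'' preceding the statement) and then invoked as a black box in Lemma~\ref{modi}. Your proposal supplies the standard Hadwiger--Ohmann proof, which is correct and self-contained; the bookkeeping issues you flag in the last paragraph are genuine but routine, and your outline handles them in the usual way.
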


\section{Part One. Nonlocal First Order Equations}

\subsection{Settings and Assumptions}\label{sub2.1}

We study equation \eqref{eqn1} in the first part of this paper. 

Suppose $S\subset\mathbb{R}^d$ is open and the boundary is $C^1$. Then we say $S$ is \textit{$r$-prox-regular} if for any point $x \in \partial S,y\in S$ we have
\[\langle n(x), y-x\rangle \leq \frac{|y-x|^2}{r}\]
where $n(x)$ is the unit normal at $x$ (see \cite{proxregular} for more results).
This is the same as: for any boundary point $x$, there is a ball of radius $r$ that intersects $\overline{S}$ at exactly $x$. 

Recall a $C^1$ function $f(x)$ is called \textit{$\lambda-$convex} in $S \subseteq  \mathbb{R}^d $ if
\[\langle \nabla f(x)-\nabla f(y),x-y \rangle \geq \lambda |x-y|^2 \text{  for all }x,y\in S.\]
Now we list the assumptions below.

\begin{flushleft}
\textbf{(O1)} For each $t$, $\Omega(t)$ is a non-empty open subset of $ \mathbb{R}^d $ which is always \textit{$r_p$-prox-regular} for some $r_p>0$. The lateral boundary $\cup_t\left(\partial\Omega(t)\times\{t\}\right)$ is $C^1$ in both time and space direction, particularly the boundary speed $c(x,t)$ is continuous if restricted to the boundary. In addition, we require
\begin{equation*}|c(x,t)|\leq C (1+|x|).
\end{equation*}
\end{flushleft}

\begin{flushleft}
\textbf{(C1)} $V\in C^1(\mathbb{R}^d)$ and $|\nabla V(x)|\leq C(1+|x|)$
for all $x\in \mathbb{R}^d$.\\
\textbf{(C2)} $W\in C^1(\mathbb{R}^d)$ with $|\nabla W(x)|\leq C(1+|x|)$ 
for all $x\in \mathbb{R}^d$, and $W(x)=W(-x)$. If \[\{(x,t),c(x,t)<0\}\ne \emptyset,\]
we require $W\in C^2(\mathbb{R}^d)$.\\
\textbf{(C3)} There exists some $\lambda\in\mathbb{R}$ such that $V,W$ are $\lambda$-convex in $\mathbb{R}^d$. 

\end{flushleft}

\subsection{Particle Approximations}

As stated in the introduction, we use particle approximations. Consider: $\mu_0=\sum_{i=1}^{N} m_i \delta_{x_{i,0}}$ where $N$ is a large integer and $ \sum_{i=1}^N m_i=1 $. We look for a solution of the form $\mu(t)=\sum_{i=1}^{N} m_i \delta_{x_i(t)}$. By the weak formulation, equation \eqref{eqn1} becomes
\begin{equation}\label{ODE}
\left\{\begin{aligned}
&\dot{x_i}(t)=P_{x,t}\left(w(\mu)(x_i(t),t)\right)  \text{ a.e. for } t>0\\
&x_i(0)=x_{i,0} \in \overline{\Omega(0)}
\end{aligned}
\right.
\end{equation}
where 
\begin{equation}\label{Fxmu}
w(\mu)(y,t):=-\nabla V(y)-\sum_j m_j \nabla W \left(y-x_j(t)\right).\end{equation}

The ODE can be solved by a differential inclusions argument; the proof is in the appendix.

\begin{proposition}\label{2.1}
Assume conditions \textsc{(O1)(C1)(C2)} hold. Let $\mu_0=\sum_{i=1}^N m_i\delta_{x_{i,0}}$ be the finite sum of delta masses. Then the ODE system \eqref{ODE}
has a locally absolutely continuous solution (for each $i$, $x_i(\cdot)$ is absolutely continuous).
\end{proposition}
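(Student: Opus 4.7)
My plan is to recast \eqref{ODE} as a differential inclusion on a moving prox-regular set and construct a solution by a time-stepping (catching-up) scheme. First, observe that by the definition \eqref{Pt},
\[
P_{x,t}(w) = w - (w\cdot n - c)_+\, n
\]
is simply the Euclidean projection of $w$ onto the half-space $\{\xi \in \mathbb{R}^d : \xi\cdot n(x,t) \leq c(x,t)\}$, where $c,n$ are extended by $0$ off the boundary. Hence the ODE is equivalent to the differential inclusion
\[
\dot x_i(t) \;\in\; w(\mu)(x_i,t) - N_{\overline{\Omega(t)}}(x_i(t)), \qquad \dot x_i \cdot n \leq c \text{ a.e.},
\]
with $\dot x_i$ selected as the element closest to $w$ in this set. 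The $r_p$-prox-regularity in \textsc{(O1)} together with the $C^1$ lateral boundary puts this into the framework of Moreau's sweeping processes on moving prox-regular sets, in which the metric projection onto $\overline{\Omega(t)}$ is well defined and Lipschitz on a uniform tubular neighbourhood.

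With this reformulation in hand, fix a time step $h = T/M$, set $t_k = kh$, and define $x_i^0 = x_{i,0}$ and inductively
\[
x_i^{k+1} = \pi_{\overline{\Omega(t_{k+1})}}\bigl( x_i^k + h\, w(\mu^k)(x_i^k, t_k)\bigr), \qquad \mu^k := \sum_j m_j \delta_{x_j^k},
\]
where $\pi_K$ denotes the metric projection onto $K$, and let $x_i^h(\cdot)$ be the piecewise affine interpolant. The linear-growth hypotheses on $\nabla V$, $\nabla W$ and $c$ in \textsc{(O1)(C1)(C2)}, combined with $\sum_j m_j = 1$, give $|x_i^{k+1}| \leq |x_i^k| + Ch(1 + \max_j |x_j^k|)$; a discrete Gr\"onwall argument then yields $\max_{i,\,t\in[0,T]} |x_i^h(t)| \leq R(T)$ uniformly in $h$. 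Consequently the velocities $\dot x_i^h$ are uniformly bounded, so by Arzel\`a--Ascoli a subsequence (not relabelled) converges uniformly on $[0,T]$ to an absolutely continuous curve $x_i(\cdot)$, with $\dot x_i^h \rightharpoonup \dot x_i$ weakly in $L^2([0,T];\mathbb{R}^d)$.

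The main obstacle is that $w \mapsto P_{x,t}(w)$ is discontinuous across $\{w\cdot n = c\}$, so one cannot pass to the limit pointwise. The resolution is to pass to the limit inside the inclusion rather than in $P_{x,t}$ itself: the discrete residual $h^{-1}(x_i^{k+1}-x_i^k) - w(\mu^k)(x_i^k, t_k)$ lies in the proximal normal cone of $\overline{\Omega(t_{k+1})}$ at $x_i^{k+1}$, and for uniformly $r_p$-prox-regular sets this cone has closed graph on bounded vectors. Combining this with the joint continuity of $(x,\mu,t)\mapsto w(\mu)(x,t)$ from \textsc{(C1)(C2)} and the uniform convergence of the $x_j^h$ yields, in the limit, $\dot x_i - w(\mu)(x_i,t) \in -N_{\overline{\Omega(t)}}(x_i)$ together with $\dot x_i \cdot n \leq c$ almost everywhere. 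Finally, because $\dot x_i^h$ is the minimal-norm selection at every discrete step, so is the weak limit $\dot x_i$; and the minimal-norm element of the admissible set is precisely $P_{x,t}(w)$, which identifies $\dot x_i = P_{x_i,t}(w(\mu)(x_i,t))$ a.e.\ and recovers \eqref{ODE}. Verifying this closed-graph property and the stability of the minimal-norm selection in the presence of the nonlocal coupling through $\mu$ is the most delicate step, and is precisely why the detailed argument is deferred to the appendix.
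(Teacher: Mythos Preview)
Your reformulation as a perturbed sweeping process is exactly the paper's starting point, and your catching-up construction is a legitimate, more hands-on substitute for the paper's route, which simply invokes the black-box existence result of Edmond--Thibault (Theorem~5.1 in \cite{BV}) after checking its hypotheses. Both approaches deliver an absolutely continuous curve with $x_i(t)\in\overline{\Omega(t)}$ satisfying the inclusion $\dot x_i \in w(\mu)(x_i,t) - N_{\overline{\Omega(t)}}(x_i)$ almost everywhere; your scheme is essentially a reproof of that theorem in this specific setting.

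The substantive gap is in your identification step. The assertion that ``$\dot x_i^h$ is the minimal-norm selection at every discrete step, so is the weak limit $\dot x_i$'' is not justified: weak $L^2$-limits do not in general preserve minimal-norm (or closest-to-$w$) selections, and you give no argument specific to this situation. More to the point, the detour through minimality is unnecessary. The paper recovers the projected ODE from the inclusion and the constraint \emph{alone}, by a pointwise argument that applies to any absolutely continuous solution of the inclusion regardless of how it was produced. At a two-sided differentiability point with $x_i(t)\in\partial\Omega(t)$, expanding the signed distance $d\bigl(x_i(t+s),\partial\Omega(t+s)\bigr)\leq 0$ to first order in $s$ of both signs forces $\dot x_i\cdot n = c$; writing $\dot x_i = w - k_i n$ with $k_i\geq 0$ from the inclusion then gives $k_i = w\cdot n - c\geq 0$, hence $\dot x_i = w - (w\cdot n - c)_+\, n = P_{x_i,t}(w)$. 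At interior points the normal cone is $\{0\}$ and $\dot x_i = w = P_{x_i,t}(w)$ trivially. So once your catching-up scheme has produced a solution of the inclusion, the identification follows directly, with no need to track how minimal selections behave under weak limits.
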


\subsubsection{Some Estimates for Discrete Systems}

Since $\frac{d}{d t}x_j(t)= P_{x,t} \left(w(\mu)(x_j(t),t)\right)$ a.e. for $t>0$, we have
\[
|\dot{x_j}(t)|\leq |w(x_j(t),t)|+|c(x_j(t),t)|.
\]
By direct computations, \textsc{(C1),(C2)} and the fact that $\sum_j m_j=1$
\[|w(x_i(t),t)|\leq C\l 1+|x_i(t)|+\sum_j\l m_j|x_i(t)-x_j(t)|\r\r\leq C(1+|x_i(t)|+m_2(\mu(t))^\frac{1}{2}).\]
We know that $m_2(\mu_0)=\sum_i m_i x_{i,0}^2$ is bounded. Then
\begin{equation}\label{boundm2}
\begin{aligned}
\frac{d}{d t}\left(m_2(\mu(t)\right)&\leq 2\sum_i m_i |x_i(t)||\dot{x}_i(t)|\leq 2\sum_i m_i |x_i(t)|(|w(x_j(t),t)|+|c(x_i(t),t)|)\\
&\leq C \sum_i\left( m_i(|x_i(t)|+|x_i(t)|^2)+\sum_j m_i|x_i(t)|m_2(\mu(t))^\frac{1}{2} \right)
\leq C (1+ m_2(\mu(t)) ).
\end{aligned}
\end{equation}
This provides us a uniform bound for $m_2\left(\mu(t)\right)$ which only depends on $T, m_2(\mu_0)$. And then
\begin{equation}\label{bound F}
|w(x_i(t),t)| \leq C(1+|x_i(t)|+m_2\left(\mu(t)\right)^\frac{1}{2})\leq C(1+|x_i(t)|).\end{equation}
Also we have
\[\frac{d}{d t}|x_i(t)|^2\leq C|x_i(t)|(|w(x_i(t),t)|+|c(x_i(t),t)|)\leq C(1+|x_i(t)|^2).\]
This shows the linear growth of $x_i$ in time that
\begin{equation}\label{bound x2}
|x_i(t)|^2\leq C(1+|x_{i,0}|^2) \quad \text{ for }t\leq T.
\end{equation}
And this illustrates that the solutions $\mu(t)$ are always compactly supported in finite time. Also for particles starting outside $B_{CR}$, they will be outside $B_R$ for $t\in [0,T]$. This will be used in Theorem \ref{converge}.

Let $k(x,t) n=w(x,t)-P_{x,t}w(x,t)$ and so $k$ actually depends on $\mu$. Then
\begin{equation} \label{bound k}
|k(x_i(t),t)|\leq |w(x_i(t),t)|+|c(x(t),t)|\leq C(1+|x_i(t)|).\end{equation}
From the above \begin{equation}\label{wmi}
d_W^2(\mu(t),\mu(s))\leq \sum_i m_i|x_i(t)-x_i(s)|^2\leq C\sum_i m_i\int_s^t(1+|x_i(r)|)^2dr\leq C|t-s|
\end{equation}
which gives the $\frac{1}{2}$-H\"{o}lder continuity of $\mu(\cdot)$ in Wasserstein distance.

\medskip

Now let us recall the \textit{metric derivative} of an absolutely continuous curve in $\mathcal{P}_2(\mathbb{R}^d)$,
\[|\mu'|(t):=\lim_{h\rightarrow 0} \frac{d_W\left(\mu(t+h),\mu(t)\right)}{h}.\]
Then we can show the following proposition regarding the well-posedness of solutions for the projected discrete systems.

\begin{proposition}\label{thm1} Assume \textsc{(O1)(C1)(C2)} and $\mu_0=\sum_{i=1}^N m_i\delta_{x_{i,0}}\in\mathcal{P}(\overline{\Omega(0)})$, then equation \eqref{eqn1} has a weak solution $\mu(t)\in \mathcal{P}_2\left(\overline{\Omega(t)}\right)$ with $\mu(0)=\mu_0$. 
And $v(x,t):= P_{x,t} \left(-\nabla V(x)-\nabla W*\mu(x)\right)$ satisfies that \[ |\mu'|^2(t)\leq\int_{ \mathbb{R}^d }|v|^2 d\mu(x,t) \text{ for a.e. $t>0$}.\] \end{proposition}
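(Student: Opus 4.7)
The plan is to set $\mu(t) := \sum_{i=1}^N m_i \delta_{x_i(t)}$, where the trajectories $x_i(\cdot)$ are the absolutely continuous curves produced by Proposition \ref{2.1}, and then verify in turn that (i) each trajectory remains in $\overline{\Omega(t)}$, (ii) $\mu(\cdot)$ is a weak solution of \eqref{eqn1} in the sense of the definition in Section \ref{1.1}, and (iii) the metric derivative inequality holds. The growth estimates \eqref{bound F}--\eqref{bound k} and the H\"older bound \eqref{wmi} already give local absolute continuity in Wasserstein distance and $v \in L^1_{loc}([0,T);L^2(\mu(t)))$, so these estimates will be invoked freely.

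For (i), the ODE \eqref{ODE} forces $\dot{x}_i(t) = P_{x_i(t),t}(w(\mu)(x_i(t),t))$ a.e. By the very definition \eqref{Pt} of the projection, whenever $x_i(t)\in\partial\Omega(t)$ one has $\dot{x}_i(t)\cdot n(x_i(t),t) \leq c(x_i(t),t)$, which matches the outward normal speed of the boundary. To promote this infinitesimal condition into invariance of $\overline{\Omega(t)}$ I would use the $r_p$-prox-regularity hypothesis \textsc{(O1)}: for $y\in\overline{\Omega(t)}$ near a boundary point $x$, the inequality $\langle n(x),y-x\rangle\leq |y-x|^2/r_p$ controls the signed distance to the domain quadratically in the excursion. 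A Gr\"onwall argument applied to $\mathrm{dist}(x_i(t),\Omega(t))$ (which is zero at $t=0$) together with $\dot{x}_i\cdot n \leq c$ then yields that this distance remains zero on $[0,T]$.

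For (ii), the weak formulation reduces for any $\varphi\in C_c^\infty(\mathbb{R}^d\times(0,T))$ to
\[\int_0^T\sum_i m_i\bigl(\varphi_t(x_i(t),t)+\nabla\varphi(x_i(t),t)\cdot\dot{x}_i(t)\bigr)\,dt = \int_0^T \frac{d}{dt}\sum_i m_i\varphi(x_i(t),t)\,dt = 0,\]
which vanishes because $\varphi$ has compact support in time. For (iii), fix $t$ and $h>0$; the coupling $\sum_i m_i\delta_{(x_i(t),x_i(t+h))}$ lies in $\Gamma(\mu(t),\mu(t+h))$, so by Cauchy--Schwarz
\[d_W^2(\mu(t),\mu(t+h)) \leq \sum_i m_i|x_i(t+h)-x_i(t)|^2 \leq h\sum_i m_i\int_t^{t+h}|\dot{x}_i(r)|^2\,dr.\]
Dividing by $h^2$, letting $h\to 0$, and using Lebesgue differentiation together with $\dot{x}_i(r)=v(x_i(r),r)$ gives $|\mu'|^2(t)\leq \sum_i m_i|v(x_i(t),t)|^2 = \int_{\mathbb{R}^d}|v|^2 d\mu(t)$ at almost every $t$.

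I expect step (i) to be the main technical hurdle: the projection $P_{x,t}$ is only defined on $\overline{\Omega}_T$ and is discontinuous across the lateral boundary, so one must rule out null-set excursions of $x_i(t)$ outside $\overline{\Omega(t)}$ in a quantitative way. Prox-regularity is precisely the mechanism that makes this possible, and joint $C^1$ regularity of the lateral boundary in space-time is used to handle the moving constraint. Once invariance is in place, steps (ii) and (iii) are essentially bookkeeping given the a priori bounds already derived.
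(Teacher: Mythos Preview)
Your argument is essentially correct, but step (i) is redundant and your assessment of where the difficulty lies is off. Invariance of the trajectories in $\overline{\Omega(t)}$ is already part of the output of Proposition \ref{2.1}: its proof (in the appendix) goes through the sweeping-process theorem of Edmond--Thibault \cite{BV}, which by construction produces solutions $x(t)\in\overline{\Omega(t)}^N$, and then checks that these satisfy the projected ODE. So the ``main technical hurdle'' you anticipate has already been absorbed upstream; there is nothing to re-derive here. (Trying to prove invariance \emph{from} the ODE \eqref{ODE} would in any case be awkward, since $P_{x,t}$ is only defined on $\overline{\Omega}_T$ to begin with.)

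Step (ii) is identical to the paper's argument. For step (iii), the paper simply invokes Theorem 8.3.1 of \cite{gflow} --- the general fact that for an absolutely continuous curve satisfying a continuity equation with velocity $v\in L^1_{loc}(L^2(\mu))$, one has $|\mu'|^2(t)\leq\int|v|^2\,d\mu$ --- whereas you give the direct coupling argument tailored to the discrete case. Your computation is correct and arguably more transparent here, since the coupling $\sum_i m_i\delta_{(x_i(t),x_i(t+h))}$ is explicit; the cited theorem has the advantage of applying verbatim to the continuum limit, which is what is eventually needed in Theorem \ref{thmu}.
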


\begin{proof}
Let $\zeta(x,t)\in C_c^\infty\left( \mathbb{R}^d \times (0,T)\right)$ be a test function.
By Proposition \ref{2.1},
\begin{align*}
    0&=\frac{d}{d t}\int_{ \mathbb{R}^d }\zeta(x,t)d\mu(t)
=\sum m_i \frac{\partial\zeta}{\partial t}(x_i(t),t)+m_i \nabla_x \zeta(x_i(t),t)\cdot P_{x,t}\left(w(x_i(t),t))\right)\\
&=\int_{ \mathbb{R}^d }\frac{\partial\zeta}{\partial t}(x,t)d\mu(t)+\int_{ \mathbb{R}^d }\nabla \zeta(x,t)\cdot  P_{x,t} \left(-\nabla V(x)-\nabla W*\mu (x)\right)d\mu.
\end{align*}
By \eqref{wmi}, $\mu(t)$ is an absolutely continuous curve with respect to Wasserstein metric. By the previous estimates we have $v(x,t) \in L^1([0,T); L^2(\mu))$. 
By Theorem 8.3.1 \cite{gflow}, 
\[
|\mu'|^2(t)\leq\int_{ \mathbb{R}^d }| P_{x,t} \left(-\nabla V(x)-\nabla W*\mu(x)\right)|^2 d\mu(x,t) \text{ for a.e. } t>0.\]
%The vector field $v$ is often called the tangent velocity field of $\mu$, also see Theorem 3.4 \cite{prox}. 

\end{proof}

\subsubsection{Stability of Discrete Solutions}

The following proposition gives the stability result of solutions in the discrete case. The proof is similar to the one in Proposition 5.1 \cite{prox}. The only difference is the movement of the boundary which can be controlled by condition \textsc{(O1)}.

\begin{proposition}\label{thm2}
Assume \textsc{(O1)(C1)-(C3)} hold. Suppose $\mu^1,\mu^2$ are solutions with discrete type initial measures $\mu^1_0,\mu^2_0$ as in Proposition \ref{thm1}. Then there exists a constant $C$ depending on the support of the initial data, the conditions and $T$, such that
\[d_W\left(\mu^1(t),\mu^2(t)\right)\leq Cd_W\left(\mu^1_0,\mu^2_0\right)  \quad\text{ for all }0<t\leq T.
\]
\end{proposition}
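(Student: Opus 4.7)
The plan is to run a Gronwall argument on a natural coupling of the particle systems. Write $\mu^k(t) = \sum_i m^k_i \delta_{x^k_i(t)}$ for $k=1,2$; take an optimal coupling $\gamma_0 = \sum_{i,j} m_{ij}\delta_{(x^1_{i,0},\, x^2_{j,0})}$ realizing $d_W^2(\mu^1_0,\mu^2_0)$, and let $\gamma(t) := \sum_{i,j} m_{ij}\delta_{(x^1_i(t),\, x^2_j(t))}$, which is a (generally non-optimal) transport plan between $\mu^1(t)$ and $\mu^2(t)$. Set
\[ D(t) := \sum_{i,j} m_{ij}\,|x^1_i(t) - x^2_j(t)|^2 \geq d_W^2(\mu^1(t),\mu^2(t)),\qquad D(0) = d_W^2(\mu^1_0,\mu^2_0), \]
and aim for $\dot D(t) \leq C D(t)$, after which Gronwall finishes the argument.

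Differentiating in $t$ using \eqref{ODE} and decomposing $P_{x,t} w = w - k(x,t)\,n(x,t)$ with $k := (w\cdot n - c)_+ \geq 0$ supported on the lateral boundary, I write (with the shorthand $w^k_i := w(\mu^k)(x^k_i,t)$)
\[ \tfrac{1}{2}\dot D(t) = \sum_{i,j} m_{ij}(x^1_i - x^2_j)\cdot(w^1_i - w^2_j) \;-\; \sum_{i,j} m_{ij}(x^1_i - x^2_j)\cdot(k^1_i n^1_i - k^2_j n^2_j) \;=:\; I - II. \]
For $I$, I would insert $w = -\nabla V - \nabla W * \mu$ from \eqref{Fxmu}, use the $\lambda$-convexity of $V$ directly from \textsc{(C3)}, and apply the standard symmetrization for the $\nabla W * \mu$ contribution (using $W(x) = W(-x)$ together with $\lambda$-convexity), just as in Proposition 5.1 of \cite{prox}, to obtain $I \leq |\lambda|\,D(t)$ up to a constant.

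The main effort, and the only genuine departure from \cite{prox}, is term $II$, arising from the moving boundary. Whenever $k^1_i > 0$, the particle $x^1_i$ lies on $\partial\Omega(t)$ and $x^2_j \in \overline{\Omega(t)}$, so prox-regularity from \textsc{(O1)} gives
\[ n^1_i \cdot (x^2_j - x^1_i) \leq \frac{|x^1_i - x^2_j|^2}{r_p}. \]
Combining this with the pointwise bound $|k^1_i| \leq C(1 + |x^1_i|)$ from \eqref{bound k} and the uniform compact-support bound \eqref{bound x2} for $t \in [0,T]$, one obtains $-(x^1_i - x^2_j)\cdot k^1_i n^1_i \leq C |x^1_i - x^2_j|^2$. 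Treating $k^2_j n^2_j$ symmetrically gives $|II| \leq C D(t)$, after which combining with the bound on $I$ and invoking Gronwall produces $D(t) \leq e^{CT} D(0)$, which is the claim.

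The principal obstacle is precisely the presence of the boundary speed $c$ inside the projection's normal coefficient $k = (w\cdot n - c)_+$: in the stationary case $c \equiv 0$ of \cite{prox}, $k n$ reduces to the pure normal component of $w$, and prox-regularity applies immediately. The rescue here is that the $c$-modification only shifts the \emph{magnitude} of $k$, not its direction, so $k^1_i n^1_i$ remains a purely normal vector and the prox-regularity inequality still yields the correct sign. The linear-growth hypothesis $|c(x,t)| \leq C(1+|x|)$ in \textsc{(O1)} is exactly what is needed to preserve the pointwise bound on $k$ that closes the Gronwall estimate.
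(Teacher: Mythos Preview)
Your argument is correct and follows essentially the same strategy as the paper: split $P_{x,t}w = w - k\,n$, use $\lambda$-convexity of $V$ and the standard symmetrization for $W$ on the $w$-term, and use $r_p$-prox-regularity together with the bound \eqref{bound k} on the $kn$-term to close a Gronwall loop.

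The one genuine (though minor) difference is in the quantity being differentiated. The paper differentiates $d_W^2(\mu^1(t),\mu^2(t))$ directly, invoking Theorem~8.4.7 of \cite{gflow} to justify that $\tfrac{1}{2}\tfrac{d}{dt}d_W^2 = \int \langle v^1(x)-v^2(y),\,x-y\rangle\,d\gamma_t$ for the \emph{optimal} plan $\gamma_t$ at each time. You instead fix the optimal plan at $t=0$, push it forward along the trajectories to a (generally suboptimal) plan $\gamma(t)$, and differentiate the resulting transport cost $D(t)\ge d_W^2(\mu^1(t),\mu^2(t))$. Your route is slightly more elementary---it avoids the measure-theoretic differentiation lemma from \cite{gflow} and reduces everything to an ODE computation---at the cost of controlling an upper bound $D(t)$ rather than $d_W^2$ itself. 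Since $D(0)=d_W^2(\mu^1_0,\mu^2_0)$, the conclusion is identical; the symmetrization for the $\nabla W*\mu$ term works verbatim with any coupling $\gamma$ having the correct marginals, so no optimality is needed there either.
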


\begin{proof}
For $i=1,2$, let $\mu^i(t),v^i(x,t),w^i(\mu)(x,t) (=w(x_i,t))$ be defined as in Proposition \ref{2.1} and let $k^i(x,t)$ be such that $v^i(x,t)=w^i(x,t)-k^i(x,t) n(x,t)$. By $r_p$-prox regularity for each $y\in \overline{\Omega(t)}$
\[\langle k^i(x,t)n(x,t),y-x\rangle \leq \frac{|k^i(x,t)|}{r_p}|y-x|^2.\]
Estimate \eqref{bound x2} shows that for $t\in [0, T]$, $\mu^i(t)$ are compactly supported. By \eqref{bound k}, $\frac{|k^i(x,t)|}{r_p}\leq C$ in the support of $\mu^i(t)$.
Let $\gamma_t$ be an optimal transport plan between $\mu^1(t)$ and $\mu^2(t)$, then
\[-\int_{\mathbb{R}^{2d}}\langle k^1(x,t)n(x,t)-k^2(y,t)n(y,t),x-y\rangle d\gamma_t(x,y) \leq Cd_W^2\left(\mu^1(t),\mu^2(t)\right).
\]
By Theorem 8.4.7 from \cite{gflow}:
\begin{align*}
    &\frac{1}{2}\frac{d}{d t}d_W^2\left(\mu^1(t),\mu^2(t)\right)=\int_{\mathbb{R}^{2d}}\langle v^1(x,t)-v^2(y,t),x-y\rangle d\gamma_t(x,y)\\
\leq &\int_{\mathbb{R}^{2d}}\langle w^1(x,t)-w^2(y,t),x-y\rangle d\gamma_t(x,y)+Cd_W^2\left(\mu^1(t),\mu^2(t)\right).
\end{align*}
Since $w^i(x,t)=-(\nabla V+\nabla W*\mu^i )(x,t)$, by condition \textsc{(C3)},\[
-\int_{\mathbb{R}^{d}}\langle\nabla V(x)-\nabla V(y),x-y\rangle d\gamma_t(x,y)\leq -\lambda d_W^2(\mu^1(t),\mu^2(t)).\]
Since $W$ is $\lambda$-convex and even, we get 
\begin{align*}
&-\int_{\mathbb{R}^{2d}}\langle \nabla W*\mu^1(x)-\nabla W*\mu^2(y),x-y\rangle d\gamma_t(x,y)\\
=&-\frac{1}{2}\int_{\mathbb{R}^{2d}}\langle\nabla W(x-x')-\nabla W(y-y'),x-x'-y+y'\rangle d\gamma_t(x',y')d\gamma_t(x,y)\\
\leq& -\frac{\lambda}{2}\int_{\mathbb{R}^{2d}}|x-x'-y+y'|^2 d\gamma_t(x',y')d\gamma_t(x,y)\leq C(\lambda) d_W^2\left(\mu^1(t),\mu^2(t)\right).
\end{align*}
Here $C(\lambda)=\min\{0,2\lambda\}$. So in all
$\frac{d}{d t}d_W^2\left(\mu^1(t),\mu^2(t)\right)\leq C d_W^2\left(\mu^1(t),\mu^2(t)\right)$ which gives \[d_W^2\left(\mu^1(t),\mu^2(t)\right)\leq Cd_W^2\left(\mu^1(0),\mu^2(0)\right) \quad \text{ for all }0\leq t\leq T.\]
\end{proof}

\begin{remark}
If we were more careful on the dependence of the constant on $T$ and the support of the initial data (suppose $supp(\mu^i_0)\subset B_R$), we would find out that the constant in the above proposition can be bounded by $C \exp (CRT \exp(CT))$ where $C$ is a universal constant. 
\end{remark}

\subsection{Compactly Supported Initial Data}

Suppose $\mu_0\in\mathcal{P}_2(\overline{\Omega(0)})$ and consider a sequence of delta masses $\mu^n_0=\sum_{i=1}^{k(n)}m_i\delta_{x_i^n}$ converging to $\mu_0$ in Wasserstein metric. Without loss of generality, we assume that $\mu^n_0$ are supported in a compact set for all $n$.
Suppose $\mu^n(t)$ is a solution to \eqref{eqn1} given by Proposition \ref{thm1} with initial value $\mu^n_0$.
Proposition \ref{thm2} shows that for each $t$, $\mu^n(t)$ is a Cauchy sequence once $\mu^n_0$ is Cauchy. So we can write the limit as $\mu(t)$ which again is compactly supported.

Now we need to show that the limit $\mu(t)$ is indeed a solution to \eqref{eqn1}.
We do not expect showing that
\begin{equation}\label{vpf}v(x,t):= P_{x,t} (-\nabla V-\nabla W*\mu)(x,t)\end{equation} is the tangent velocity field of $\mu(t)$ by simply letting $n$ goes to infinity due to the discontinuity of $ P_{x,t} $ in $(x,t)$, which is also explained in Remark 3.2 \cite{prox}. To overcome this problem, we use gradient flow method.

Let us start with the following definition.

\begin{definition}
Let $\mu(\cdot)$ be an absolutely continuous curve in $\mathcal{P}_2(\mathbb{R}^d)$ with compact support in $\overline{\Omega_T}$.
We say that $\mu(\cdot)$ is \textit{a curve of maximal slope with respect to $\phi$ in a time-dependent domain}, if for all $0\leq s<t< T$:
\begin{align}\nonumber
&\phi\left(\mu(s)\right)\geq \phi\left(\mu(t)\right)+\frac{1}{2}\int^t_s|\mu'|^2(r)dr+\frac{1}{2}\int^t_s\int_{ \mathbb{R}^d }\left| P_{x,r} w(x,r))\right|^2d\mu(x,r)dr\\\label{extra}
&\quad +\int^t_s\int_{ \mathbb{R}^d }\left(w(x,r)- P_{x,r} w(x,r)\right) \cdot P_{x,r} w(x,r) d\mu(x,r)dr.
\end{align}
\end{definition}
Here as before, $w(x,r)=w(\mu)(x,r)=(-\nabla V-W*\mu)(x,r)$.
As mentioned in the introduction, the last term of \eqref{extra} appears because of the time-dependence of the domain. For $0\leq s<t\leq T$ we denote
\[\mathcal{E}(\mu):=\int^t_s\int_{ \mathbb{R}^d }w(x,r)\cdot P_{x,r} w(x,r)-\frac{1}{2}\left| P_{x,r} w(x,r)\right|^2d\mu(x,r)dr.\] 
We need the following lemma which shows the lower semi-continuity of $\mathcal{E}(\cdot)$ as $\mu^n\rightarrow \mu$. We require that $W\in C^2$ if the domain is compressing locally.

\begin{lemma}\label{lemu}
Notations are as above and suppose $\mu^n(t)$ converges to $\mu(t)$ in Wasserstein distance uniformly for all $t\in[0,T]$. If \textsc{(C1)(C2)} hold, then for all $0\leq s\leq t\leq T$ we have
\begin{equation}\label{lemu1}
\liminf_{n\rightarrow\infty}\mathcal{E}(\mu^n)\geq \mathcal{E}(\mu). 
\end{equation}
\end{lemma}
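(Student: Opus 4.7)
The plan is to reduce the lemma to an application of Fatou's lemma for narrowly convergent probability measures against a lower semi-continuous integrand. First I would rewrite the integrand in a form that makes the structure transparent: a direct calculation splitting the two cases in \eqref{Pt} gives the compact identity
\[ g(v,x,r) := v\cdot P_{x,r} v - \tfrac12 |P_{x,r} v|^2 = \tfrac12 |v|^2 - \tfrac12 \bigl((v\cdot n(x,r) - c(x,r))_+\bigr)^2, \]
so that $\mathcal{E}(\mu) = \int_s^t \!\int g(w(\mu)(x,r),x,r)\,d\mu(x,r)\,dr$ with $w(\mu) = -\nabla V - \nabla W * \mu$. The subtracted term switches on only where $v\cdot n > c$, and since the convention $n = c = 0$ in the interior forces $g = \tfrac12|v|^2$ there, I would argue that $g(v,\cdot,\cdot)$ is l.s.c.\ on $\overline{\Omega}_T$ for each fixed $v$: on interior-approach sequences the integrand equals $\tfrac12|v|^2$, which dominates the boundary value $\tfrac12|v|^2 - \tfrac12((v\cdot n - c)_+)^2$, while on sequences through the lateral boundary continuity of $n, c$ from \textsc{(O1)} makes $g$ continuous.

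Next, I would quantify the uniform convergence $w(\mu^n) \to w(\mu)$. The particle bound \eqref{bound x2} confines all supports to a common ball $B_R$ for $r \in [0,T]$. Assumption \textsc{(C2)} with $W \in C^2$ makes $\nabla W$ Lipschitz on $B_{2R}$, and Kantorovich--Rubinstein duality together with the uniform Wasserstein hypothesis then yields
\[\sup_{(x,r)\in B_R \times [0,T]} |w(\mu^n)(x,r) - w(\mu)(x,r)| \leq \mathrm{Lip}(\nabla W|_{B_{2R}}) \cdot \sup_{r\in[0,T]} d_W(\mu^n(r),\mu(r)) \to 0.\]
Since $g$ is locally Lipschitz in $v$ uniformly in $(x,r)$ (read off from the explicit form), this will let me swap $w(\mu^n)$ for $w(\mu)$ inside the integrand at the cost of an $o(1)$ error:
\[ \int_s^t\!\int g(w(\mu^n),x,r)\,d\mu^n\,dr = \int_s^t\!\int g(w(\mu),x,r)\,d\mu^n\,dr + o(1). \]

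Lastly I would invoke Fatou. The function $w(\mu)$ is continuous in $(x,r)$ by \textsc{(C1)(C2)} and the $\tfrac12$-Hölder continuity of $r \mapsto \mu(r)$ in $W_2$ (analogous to \eqref{wmi}, inherited under the uniform limit), so $F(x,r) := g(w(\mu)(x,r),x,r)$ is l.s.c.\ and bounded on $B_R \times [0,T]$. Applying the standard Fatou lemma for narrowly convergent probability measures against bounded l.s.c.\ integrands (cf.\ Chapter 5 of \cite{gflow}) slice by slice in $r$, together with Fatou in $r$, then yields $\liminf_n \int_s^t\!\int F\,d\mu^n\,dr \geq \int_s^t\!\int F\,d\mu\,dr = \mathcal{E}(\mu)$, which combined with the replacement above gives \eqref{lemu1}. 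The main obstacle, as the introduction flags, is the l.s.c.\ of $g$ when $c < 0$ (it is not convex in $v$); the whole argument hinges on exploiting the uniform convergence of $w(\mu^n)$ to reduce the question to l.s.c.\ in $(x,r)$ alone, where the rescue is that the nonconvex boundary piece only lets the integrand jump \emph{down} as one approaches the boundary from the interior, never up, which is exactly what an l.s.c.\ argument needs.
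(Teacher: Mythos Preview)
Your explicit identity
\[ g(v,x,r) = \tfrac12|v|^2 - \tfrac12\bigl((v\cdot n(x,r) - c(x,r))_+\bigr)^2 \]
is correct and is a genuine simplification: it makes the lower semicontinuity in $(x,r)$ immediate (the subtracted term can only switch on at the boundary, so $g$ jumps down there, never up) and replaces the paper's case-by-case computation. From that point on your argument---freeze $w$ at the limit via uniform convergence of $w(\mu^n)\to w(\mu)$, then pass to the limit in the measure using l.s.c.\ of $(x,r)\mapsto g(w(\mu)(x,r),x,r)$---is exactly the paper's argument for the case $W\in C^2$.

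The gap is in coverage. Condition \textsc{(C2)} imposes $W\in C^2$ \emph{only} when $\{c<0\}\neq\emptyset$; in the non-compressing regime $c\geq 0$ one merely has $W\in C^1$, and your Kantorovich--Rubinstein step ``$\mathrm{Lip}(\nabla W|_{B_{2R}})$'' is not available. The paper closes this case by a separate argument: it shows $E(w)$ is \emph{convex} in $w$ whenever $c\geq 0$, writes $E$ as a supremum of affine functions, and passes to the limit using only narrow convergence and $W\in C^1$. You should either add that convexity argument for the $c\geq 0$ branch, or patch your own route by observing that $W\in C^1$ already makes $\nabla W$ uniformly continuous on $B_{2R}$ with some concave modulus $\omega$, whence
\[ |w(\mu^n)(x,r)-w(\mu)(x,r)| \leq \int \omega(|y-y'|)\,d\gamma \leq \omega\!\left(d_W(\mu^n(r),\mu(r))\right)\to 0 \]
uniformly; this is enough for your replacement step and the rest of your proof goes through unchanged.
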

\begin{proof}
We consider the integrals on the boundary of $c(x,t)\geq 0$ and $c(x,t)<  0$ separately. Let us still write $t$ for time dependence and $w$ (or $w_i$) are continuous vector fields in $\overline{\Omega_T}$. For each $(x,t)\in\overline{\Omega_T}$, we define 
\[E(w):=w\cdot P_{x,t}w-\frac{1}{2}| P_{x,t} w|^2\] and we claim that the function $E(w)$ is convex in $w$ if $c(x,t)\geq 0$ which is equivalent to
\begin{equation}\label{wtleq}
E(w_s)\leq (1-s)E(w_0)+s E(w_1)
\end{equation}
where $w_s=(1-s)w_0+s w_1$. We denote $w_{i}^{N}:=w_i\cdot n$.
If $w_{0}^{N}\leq c,w_{1}^{N} \leq c$ or $w_{0}^{N}\geq c, w_{1}^{N} \geq c$, the proof is clear by definition. If $w_0^N\geq c, w_{1}^{N}\leq c$ and $w_{s}^{N} \leq c$, then  
\begin{align*}
&(1-s)E(w_0)+s E(w_1)-E(w_s)\\
   \geq&   (1-s)\left(w_0^N c-\frac{ c^2}{2}\right)+s\left(\frac{| w_1^N |^2}{2}\right)-\frac{| w_s^N|^2}{2}\\
=& \frac{1}{2} s(1-s)\l w_0^N-w_1^N \r^2-\frac{1}{2}(1-s)\l w^N_0-c\r^2. 
\end{align*}
Note $w_0^N-w_s^N=s \l w_0^N-w_1^N \r$. The above
\[\geq \frac{1-s}{2s} \l w_0^N-w_s^N \r^2-\frac{1-s}{2}\l w^N_0-c\r^2 \geq 0. \]
Similarly if $w_0^N\geq c, w_{1}^{N}\leq c$ and $w_{s}^{N} \geq c$, then 
\begin{align*}
    & (1-s)E(w_0)+s E(w_1)-E(w_s)\\
\geq &  (1-s)\left(c w_0^N-\frac{ c^2}{2}\right)+s\left(\frac{| w_1^N|^2}{2}\right)-\l cw_s^N -\frac{c^2}{2}\r\\
=& \frac{s}{2} \l w_1^N-c\r^2\geq 0. 
\end{align*}
So $E(w)$ is convex in $w$ if $c\geq 0$. For $(x,t)$ not on the boundary, $E(w)=\frac{w^2}{2}$. Notice \[E(w)-\frac{w^2}{2}\leq 0\]
and so $E(w)$ is lower semi-continuous in $x,t$ if $c\geq 0$. Then as did in Lemma 3.7 \cite{prox}, by Proposition 6.42 \cite{10} that for each $t$, there are two families of countable many bounded and continuous functions $a_{i,t}(x),b_{i,t}(x)$ such that
\[E(w)(x,t)=\sup_{i\in \mathbb{N}}\left\{a_{i,t}(x)+b_{i,t}(x)w\right\}.
\]
Let $w(x,t)=(-\nabla V(x)-\nabla W*\mu) (x,t)$ and similarly for $w^n(x,t)$. Then for each ${N\in\mathbb{N}}$ and $t$, \begin{align*}
\liminf_n \int_{ \mathbb{R}^d \cap \{c\geq 0\}}E(w^n)d\mu^n &\geq
    \liminf_n \int_{ \mathbb{R}^d \cap \{c\geq 0\}}\max\{(a_{i,t}+b_{i,t}w^n), 0\leq i\leq N\}d\mu^n\\
    &=\int_{ \mathbb{R}^d \cap \{c\geq 0\}}\max\{(a_{i,t}+b_{i,t}w),0\leq i\leq N\}d\mu. 
\end{align*}
Then take sup over $i\in\mathbb{N}$ and use Lebesgue's monotone convergence theorem. We see
\begin{equation}\label{cl}
    \liminf_n \int_s^t\int_{ \mathbb{R}^d \cap \{c\geq 0\}}E(w^n)d\mu^n dr \geq \int_s^t\int_{ \mathbb{R}^d \cap \{c\geq 0\}}E(w)d\mu dr.
\end{equation}

If $W$ is $C^2$, we can show the lower semi-continuity of $\mathcal{E}$ without non-negative assumptions on $c$. We want to show that $E(w)(x,t)$ is lower semi-continuous on $w,x,t$. Continuity on $w$ is clear by definition and we have uniform continuity for all $(x,t)\in supp(\mu(t))\subseteq {\Omega_T}$ since the support is compact and $c$ is continuous. Let $\gamma(y,y')$ be the optimal transport plan between $\mu^n$ and $\mu$. By \textsc{(C2)}, $|D^2W|$ is bounded in compact sets. By definition
\begin{align*}
    |w^n(x,t)-w(x,t)|&=\left|\int_{\mathbb{R}^d}\nabla W(x-y) d\l\mu^n(y,t)-\mu(y,t)\r\right|\\
    &\leq \int_{\mathbb{R}^{2d}}|\nabla W(x-y)-\nabla W(x-y')|d\gamma(y,y')\\
    &\leq C \|D^2W\|_{L^\infty( supp\{\mu\})} \int_{\mathbb{R}^{2d}}|y-y'|d\gamma(y,y')\\
    &\leq C d_W(\mu^n,\mu).
\end{align*}
So for all $\epsilon>0$, if $n$ is large enough, we have for $t\in[0,T]$ 
\begin{equation}\label{clq0}
\int_{\mathbb{R}^{d}}E(w^n)d\mu^n\geq \int_{\mathbb{R}^{d}}\l E(w)-\epsilon \r d\mu^n\geq \int_{\mathbb{R}^{d}} E(w) d\mu^n-\epsilon .
\end{equation}

Now we prove lower semi-continuity of $E(\cdot)$ in $(x,t)$. For $(x,t)$ inside the domain
$E(w)(x,t)=\frac{1}{2}w^2\geq 0.
$
But for $(x,t)$ on the boundary
\[
 E(w)(x,t)=\begin{cases}\frac{1}{2}\left|w\right|^2 &\text{ if }w^N=w\cdot n\leq c,\\
c w^N-\frac{1}{2}c^2 &\text{ if }w^N> c.
\end{cases}
\]
And there is $c w^N-\frac{1}{2}c^2\leq \frac{1}{2}\l w^N \r^2\leq \frac{1}{2}\left|w\right|^2$. From this, $E(w)(x,t)$ is lower semi-continuous in $(x,t)$. Combining that $\mu^n$ converges narrowly to $\mu$, we have
\[ \liminf_{n\rightarrow \infty}\int_{\mathbb{R}^{d}} E(w) d\mu^n \geq \liminf_{n\rightarrow \infty}\int_{\mathbb{R}^{d}} E(w) d\mu.
\]
Take $\epsilon\to 0$ in \eqref{clq0} and integrate in time, we proved
\[ \liminf_{n\rightarrow \infty}\int_s^t\int_{\mathbb{R}^{d}} E(w^n) d\mu^ndr \geq \liminf_{n\rightarrow \infty}\int_s^t\int_{\mathbb{R}^{d}} E(w) d\mu dr.
\]

\end{proof}

Now we give the main theorem:
\begin{theorem}\label{thmu}
Assume conditions \textsc{(O1)(C1)-(C3)} hold. Suppose $\mu_0$ is a probability measure compactly supported in $\overline{\Omega(0)}$ and $\{ \mu^n(\cdot)\}$ are the discrete solutions as stated above. %Let $$ be the solutions of equation \eqref{eqn1} with initial data $\mu^n_0$, t
Then $\mu^n(\cdot)$ converges to $\mu(\cdot)$ which is an absolutely continuous curve in $\mathcal{P}_2(\mathbb{R}^d)$ of maximal slope with respect to $\phi$ (see definition \ref{extra}). And $\mu(\cdot)$ satisfies the equation \eqref{eqn1} in the weak sense.

Suppose we have two such solutions $\mu^i(\cdot), i=1,2$ with initial data $\mu^i_0\in \mathcal{P}_2(\mathbb{R}^d)$, and $\mu^i(\cdot)$ are compactly supported for $t\in[0,T]$. Then there exists a constant $C$ that 
\[d_W\left(\mu^1(t),\mu^2(t)\right)\leq C d_W(\mu^1_0,\mu^2_0) \text{ for all }0\leq t\leq T.\]
Here $C$ depends on universal constants and the support of $\mu^i_0$.
\end{theorem}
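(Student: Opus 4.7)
The plan is to realize the solution as a Wasserstein limit of particle solutions from Proposition \ref{thm1}, show this limit is a curve of maximal slope \eqref{extra}, and then extract from the maximal-slope inequality that it solves \eqref{eqn1} weakly. Direct passage to the limit in the continuity equation fails because $P_{x,t}$ is discontinuous on the lateral boundary, which is exactly why maximal-slope curves are the natural intermediary.

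Fix discrete measures $\mu_0^n$, uniformly compactly supported, converging to $\mu_0$ in $d_W$, and let $\mu^n(\cdot)$ be the discrete solutions from Proposition \ref{thm1}. Proposition \ref{thm2} yields $d_W(\mu^n(t),\mu^m(t)) \leq C d_W(\mu_0^n,\mu_0^m)$ uniformly in $t\in[0,T]$, so the sequence is Cauchy and the limit $\mu(\cdot)$ is a continuous curve in $\mathcal{P}_2(\mathbb{R}^d)$; uniform compact support on $[0,T]$ follows from \eqref{bound x2}. The last statement of the theorem is simply Proposition \ref{thm2} in the limit.

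For each discrete $\mu^n$, the chain rule applied to \eqref{ODE}, using $V,W\in C^1$, yields
\[
\phi(\mu^n(s)) - \phi(\mu^n(t)) = \int_s^t \int_{\mathbb{R}^d} w^n\cdot P_{x,r} w^n\, d\mu^n(x,r)\, dr.
\]
Writing $w\cdot P_{x,r} w = \tfrac{1}{2}|P_{x,r} w|^2 + \bigl(\tfrac{1}{2}|P_{x,r} w|^2 + (w - P_{x,r} w)\cdot P_{x,r} w\bigr)$, the first half bounds $\tfrac{1}{2}|(\mu^n)'|^2$ through the metric-derivative estimate from Proposition \ref{thm1}, while the second half assembles into $\mathcal{E}(\mu^n)$, giving exactly \eqref{extra} for $\mu^n$. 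Passing $n\to\infty$: $\phi(\mu^n(t))\to\phi(\mu(t))$ because $V,W$ are continuous with at most quadratic growth and the second moments are uniformly bounded; lower semi-continuity of the metric derivative under narrow convergence of absolutely continuous curves handles $\int_s^t |\mu'|^2\, dr$; and Lemma \ref{lemu}, whose hypotheses (uniform Wasserstein convergence, plus $C^2$ regularity of $W$ on regions where $c<0$) are met here, delivers $\liminf \mathcal{E}(\mu^n) \geq \mathcal{E}(\mu)$. Thus \eqref{extra} holds for $\mu$.

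Finally I pass from maximal slope to weak formulation. By Theorem 8.3.1 of \cite{gflow}, the absolutely continuous curve $\mu$ carries a minimal tangent velocity $v$ realizing the continuity equation with $\int |v|^2 d\mu = |\mu'|^2$ a.e. Since $supp(\mu(t))\subseteq \overline{\Omega(t)}$, this $v$ must satisfy $v\cdot n \leq c$ on the boundary, and decomposing $w = P_{x,t}w + kn$ with $k \geq 0$ and noting $(P_{x,t}w)\cdot n = c$ wherever $k>0$, the pointwise Young-type inequality
\[
w\cdot v \leq \tfrac{1}{2}|v|^2 + \tfrac{1}{2}|P_{x,t}w|^2 + (w - P_{x,t}w)\cdot P_{x,t}w
\]
holds. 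Integrating this against $d\mu\, dr$ and combining with the Wasserstein chain rule $\phi(\mu(s))-\phi(\mu(t)) = \int_s^t \int w\cdot v\, d\mu\, dr$ produces the reverse of \eqref{extra}; equality throughout then forces $v = P_{x,t}w$ in $L^2(d\mu\, dt)$, which is the weak formulation of \eqref{eqn1}. The main obstacle is precisely this last identification: $\mu$ may concentrate mass on $\partial\Omega(t)$ where $P_{x,t}$ is discontinuous, so one cannot identify $v$ with $P_{x,t}w$ by any pointwise/limit argument and must instead rely on the equality case of the Young bound inside the variational characterization of the minimal tangent velocity, leveraging the same regularity on $V,W$ that powered Lemma \ref{lemu}.
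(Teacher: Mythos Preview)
Your overall architecture matches the paper's: pass the maximal-slope inequality \eqref{extra} to the limit via Lemma \ref{lemu}, then compare with a reverse inequality obtained from the chain rule applied to the tangent velocity $\hat{v}$, forcing $\hat{v}=P_{x,t}w$. The gap is in how you obtain the reverse inequality.

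You assert that ``since $supp(\mu(t))\subseteq\overline{\Omega(t)}$, this $v$ must satisfy $v\cdot n\le c$ on the boundary,'' and your Young-type bound $w\cdot v\le \tfrac12|v|^2+\tfrac12|P_{x,t}w|^2+(w-P_{x,t}w)\cdot P_{x,t}w$ rests entirely on this. But the tangent velocity from Theorem 8.3.1 of \cite{gflow} is an $L^2(\mu)$ object with no pointwise boundary meaning; that it respects the constraint is not automatic and is precisely the technical heart of the paper's proof. There one shows only the integrated form
\[
\int_s^t\!\int k\,n\cdot\hat{v}\,d\mu\,dr\ \le\ \int_s^t\!\int k\,c\,d\mu\,dr
\]
(inequality \eqref{vhat}), and the argument is not soft: one uses Proposition 8.4.6 of \cite{gflow} to write $\hat{v}$ as a weak $L^2$ limit of difference quotients $(y-x)/\tau$ with $y\in\Omega(r+\tau)$, splits $y-x=(y-x')+(x'-x)$ with $x'$ the nearest boundary point on $\partial\Omega(r+\tau)$, and then invokes $r_p$-prox-regularity to bound $n\cdot(y-x')\le |y-x'|^2/r_p$, showing the error terms vanish because $d_W^2(\mu(r),\mu(r+\tau))/\tau\to 0$. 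Without this step your pointwise inequality is unjustified and the ``equality case of Young'' argument cannot start. A second, smaller omission: the chain rule $\phi(\mu(s))-\phi(\mu(t))=\int\!\int w\cdot \hat{v}\,d\mu\,dr$ for the \emph{limit} curve $\mu$ also needs proof (the paper establishes it via the $\lambda$-convexity of $V,W$ and the same Proposition 8.4.6); you invoked it only for the discrete $\mu^n$.
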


\begin{proof}
First we show $t\rightarrow \phi\left(\mu^n(t)\right)$ is absolutely continuous:
\begin{align*}
    &\left|\phi\left(\mu^n(t)\right)-\phi\left(\mu^n(s)\right) \right|\leq \sum_j m_j \left|V\left(x_j(t)\right)-V\left(x_j(s)\right)\right|+\\
    &\quad \sum_{i<j}m_i m_j \left|W\left(x_i(t)-x_j(t)\right)-W\left(x_i(s)-x_j(s)\right)\right|\\
\leq &\sum_j m_j|\nabla V(\theta)||x_j(t)-x_j(s)|+\sum_{i<j}m_i m_j |\nabla W(\theta')|(|x_j(t)-x_j(s)|+|x_i(t)-x_i(s)|)
\end{align*}
where $\theta=h x_j(t)+(1-h)x_j(s)$ and $\theta'=h' \left(x_i(t)-x_j(t)\right)+(1-h')\left(x_i(s)-x_j(s)\right)$ for some $0\leq h,h'\leq 1$. By \textsc{(C1)(C2)}, $|\nabla V(\theta)|,|\nabla W(\theta')|$ is bounded by $C(1+|x_j(t)|+|x_j(s)|)$, $C(1+|x_j(t)|+|x_j(s)|+|x_i(t)|+|x_i(s)|)$ respectively. So 
\[
\left|\phi\left(\mu^n(t)\right)-\phi\left(\mu^n(s)\right) \right|\leq C\left(1+m_2\left(\mu^n(t)\right)+m_2\left(\mu^n(s)\right)\right)d_W\left(\mu^n(t),\mu^n(s)\right).\]
Thus $\phi\left(\mu^n(t)\right)$ is absolutely continuous in $t$ since $\mu^n(\cdot)$ is absolutely continuous curves and $\{m_2(\mu^n)\}$ are uniformly bounded. We do not need $\mu^n$ to be compactly supported here.

Then we show that the discrete solutions $\mu^n$ are curves of maximal slope. By direct computation
\begin{align*}
    \frac{d}{d t}\phi\left(\mu^n(t)\right)&=\sum_j m_j\nabla V(x_j(t))\cdot \dot{x_j}(t)+\frac{1}{2}\sum_i\sum_j m_i m_j\nabla W(x_i(t)-x_j(t))\cdot (\dot{x_i}(t)-\dot{x_j}(t))\\
    &=\sum_i m_i\nabla V(x_i(t))\cdot \dot{x_i}(t)+\sum_i\sum_j m_i m_j\nabla W(x_i(t)-x_j(t))\cdot (\dot{x_i}(t)) \text{  (by symmetry of $W$)}\\
&=-\int_{ \mathbb{R}^d }|v^n|^2d\mu^n(t)+\int_{ \mathbb{R}^d }(v^n-w^n)\cdot v^n d\mu^n(t).
\end{align*}
In the above we used the notation \eqref{vpf}.
By Proposition \ref{thm1} \[|(\mu^n)'|^2(t)\leq \int_{\Omega(t)}|v^n|^2 d\mu^n \text{ a.e. in time},\] 
we deduce that $\mu^n(\cdot)$ is a curve of maximal slope according to the definition \eqref{extra}:
\begin{equation}
\phi\left(\mu^n(s)\right)\geq \phi\left(\mu^n(t)\right)+\frac{1}{2}\int^t_s|(\mu^n)'|^2(r)dr+\mathcal{E}(\mu^n).\label{dn}
\end{equation}

Now we want to show that $\mu(t)$ is also a curve of maximal slope. By Theorem 3.6 \cite{prox},
\begin{equation}\label{u'9}
\liminf_n\int^t_s|(\mu^n)'|^2(r)dr\geq \int^t_s|\mu'|^2(r)dr.
\end{equation}
Because $\mu^n(t),\mu(t)$ are compactly supported locally uniformly in time, $W,V\in C^1(\mathbb{R}^d)$ and $\mu^n(t)\to\mu(t)$ in $d_W$, we have for each $t$,
\begin{equation}
    \label{phi n convergence}
    \lim_{n\to\infty}\phi(\mu^n(t))=\phi(\mu(t)).
\end{equation}

Recall the notation \begin{equation}\label{knpf}
k(x,t) n(x,t)=w(x,t)- P_{x,t} w(x,t).
\end{equation}
Then by \eqref{dn} \eqref{u'9} and Lemma \ref{lemu}, sending $n$ to infinity shows
\[
\phi\left(\mu(s)\right)\geq \phi\left(\mu(t)\right)+\frac{1}{2}\int^t_s|\mu'|^2(r)dr+\frac{1}{2}\int^t_s\int_{ \mathbb{R}^d }|v|^2(x,r)d\mu dt\]\[+\int^t_s\int_{ \mathbb{R}^d }k(x,t)n(x,r)\cdot v(x,r) d\mu dt.
\]
Notice $k\geq 0$ and if $k(x,t)>0$ \[n(x,t)\cdot v(x,t)=n(x,t) \cdot  P_{x,t} w(x,t)=c(x,t).\] This gives
\begin{equation}\label{phig}
\begin{aligned}
&\phi\left(\mu(s)\right)\geq \phi\left(\mu(t)\right)+\frac{1}{2}\int^t_s|\mu'|^2(r)dr+\frac{1}{2}\int^t_s\int_{ \mathbb{R}^d }|v|^2d\mu dr\\
&\quad +\int_s^t\int_{ \mathbb{R}^d }k(x,r)c(x,r)d\mu dr.
\end{aligned}
\end{equation}

Next because $\mu(\cdot)$ is an absolutely continuous curve in $\mathcal{P}_2( \mathbb{R}^d) $, by Theorem 8.3.1 \cite{gflow}, there exists a unique tangent vector field $\hat{v}(x,t)\in L^2(\mu(t),\mathbb{R}^d)$ such that the continuity equation holds
\[ \frac{\partial}{\partial t}\mu +\nabla\cdot(\hat{v}\mu)=0\]
\begin{equation}\label{vh}
\text{and  }
\int_{ \mathbb{R}^d }|\hat{v}|^2d\mu(x,t)=|\mu'|^2(t).\end{equation}
The goal is to show $v=\hat{v}$. 

\medskip
We \textbf{claim} the following chain rule that for a.e. $t>0$
\[
\frac{d}{d t}\phi\left(\mu(t)\right)=-\int_{ \mathbb{R}^d }w(x,t)\cdot \hat{v}(x,t)d\mu(x,t).
\]
A similar result is proved in Theorem 1.5 \cite{prox}. For the convenience of readers, we give a sketch of proof below. For $\tau>0$, select $\gamma_t^\tau\in \Gamma_0\left(\mu(t),\mu(t+\tau)\right)$. Then
\begin{align*}
    &\phi(\mu(t+\tau))-\phi(\mu(t))\\
    =&\int_{\mathbb{R}^{2d}}V(y)-V(x)d \gamma^\tau_t(x,y)+\frac{1}{2}\int_{\mathbb{R}^{4d}}W(y_2-y_1)-W(x_2-x_1)d \gamma^\tau_t(x_1,y_1)d \gamma^\tau_t(x_2,y_2)\\
    \geq &\int_{\mathbb{R}^{2d}}\langle\nabla V(x),y-x\rangle -C|y-x|^2d \gamma^\tau_t(x,y)+\\
    &\quad \quad\int_{\mathbb{R}^{4d}}\left(\langle(\nabla W)(x_2-x_1), y_2-x_2\rangle d \gamma^\tau_t(x_1,y_1)-C|y_2-x_2|^2\right)d \gamma^\tau_t(x_2,y_2)\\
    =&\int_{\mathbb{R}^{2d}}\langle\nabla V(x),y-x\rangle d \gamma^\tau_t(x,y)+\int_{\mathbb{R}^{4d}}\langle(\nabla W *\mu(\cdot, t))(x_2), y_2-x_2\rangle d \gamma^\tau_t(x_2,y_2)-C' d_W^2(\mu(t),\mu(t+\tau))
\end{align*}
We used (C2)(C3) ($\lambda$-convexity of $V,W$ and symmetry of $W$) in the above inequality and the constants $C,C'$ only depend on $\lambda$. Then applying that $\mu(t)$ is absolutely continuous, for a.e. $t>0$
\begin{align*}
    &\lim_{\tau\to 0^+}\frac{\phi(\mu(t+\tau)-\phi(\mu(t)))}{\tau}\\
    \geq & \limsup_{\tau\to 0^+}\left( \int_{\mathbb{R}^{2d}}\langle \nabla V(x)+\nabla W *\mu(x),\frac{y-x}{\tau}\rangle d \gamma^\tau_t(x,y)-\frac{C'}{\tau}d_W^2(\mu(t),\mu(t+\tau))\right).
\end{align*}
By Proposition 8.4.6 \cite{gflow}, $\int_{\Omega(t)}\frac{y-\cdot}{\tau}dr^\tau(y)$ converges to $\hat{v}(\cdot,t)$ weakly in $L^2(\mu(t))$ where $r^\tau(y)$ is the disintegration of $\gamma_t^\tau$ with respect to $\mu(t)$. Note $\mu(t)$ is absolutely continuous, therefore for a.e. $t>0$
\begin{align*}
    \lim_{\tau\to 0^+}\frac{\phi(\mu(t+\tau)-\phi(\mu(t)))}{\tau}
    \geq  \int_{\mathbb{R}^{2d}}\langle \nabla V(x)+\nabla W *\mu(x),\hat{v}(x,t)\rangle d \mu(t).
\end{align*}
Similarly, we have
\begin{align*}
    \lim_{\tau\to 0^+}\frac{\phi(\mu(t)-\phi(\mu(t-\tau)))}{\tau}
    \leq  \int_{\mathbb{R}^{2d}}\langle \nabla V(x)+\nabla W *\mu(x),\hat{v}(x,t)\rangle d \mu(t).
\end{align*}
Again since $\mu(t)$ is absolutely continuous, as done in the discrete case (see Theorem \ref{thmu}) we have $t\to \phi(\mu(t))$ is absolutely continuous. Thus, we can conclude with the chain rule. 

\medskip
Then using the notation \eqref{knpf}
\begin{equation}\label{phil}\phi\left(\mu(s)\right)-\phi\left(\mu(t)\right)=\int^t_s\int_{ \mathbb{R}^d }v(x,r)\cdot \hat{v}(x,r)d\mu(r)dr+\int^t_s\int_{ \mathbb{R}^d }k(x,r)n(x,r)\cdot \hat{v}(x,r)d\mu(r)dr
\end{equation}
Recall $r^\tau$ as defined above. We have
\[\int^t_s\int_{ \mathbb{R}^d }k(x,r)n(x,r)\cdot \hat{v}(x,r)d\mu(r)dr=\]
\[\int^t_s\int_{\Omega(r)}k(x,r)\lim_{\tau\rightarrow  0} \int_{\Omega(r+\tau)}n(x,r)\cdot \frac{y-x}{\tau}dr^\tau(y)du(x,r).\]
Note in the above equation, $y\in\Omega(r+\tau),x\in\partial\Omega(r)$. Take $x'(x,r,\tau)$ to be one of the closest point to $x$ on $\partial\Omega(r+\tau)$. So for $\tau$ small enough, $\frac{(x'-x)}{\tau} = c(x,r)n+O(\tau)$. Also since $c(x,r)$ is continuous and $\mu(r)$ is compactly supported, we get
\[\int^t_s\int_{ \mathbb{R}^d }k(x,r)n(x,r)\cdot \hat{v}(x,r)d\mu(r)dr\leq \int^t_s\int_{ \mathbb{R}^d }k(x,r)c(x,r)d\mu(r)dr\]
\begin{equation}+\limsup_{\tau\rightarrow  0}\int^t_s\int_{\Omega(r)} \int_{\Omega(r+\tau)}k(x,r)n(x,r)\cdot \frac{y-x'}{\tau}dr^\tau(y)d\mu(r)dr.\label{tauk}
\end{equation}
Now by $r_p$-prox regularity of $\Omega(r)$, within the support of $\mu$,
\begin{align*}
    k(x,r)n(x,r)\cdot \frac{y-x'}{\tau}&\leq \frac{C_k}{r_p\tau}|y-x'|^2\\
    &\leq \frac{C_k}{r_p\tau}|y-x|^2+\frac{C_k}{r_p\tau}|x-x'||2y-x-x'|\\
&\leq \frac{C_k}{r_p\tau}|y-x|^2+\frac{2C_k}{r_p}|c(x)||y-x|+o(\tau).
\end{align*}
$C_k$ here is the bound of $k(x,r)$ for $(x,r)\in\overline{\Omega}_T$ and it depends on the support of $\mu_0$.
By the dominated convergence theorem the last term of (\ref{tauk})
\begin{align*}
    &\leq \limsup_{\tau\rightarrow  0}\int^t_s\int_{\Omega(r)\times {\Omega(r+\tau)}} \frac{C}{\tau}|y-x|^2d\gamma_r^\tau dr+\limsup_{\tau\rightarrow  0}\int^t_s\left(\int_{\Omega(r)\times {\Omega(r+\tau)}} C|y-x|^2d\gamma_r^\tau\right)^\frac{1}{2}dr\\
&\leq \limsup_{\tau\rightarrow  0}\int^t_s\frac{C}{\tau}d_W^2\left(\mu(r),\mu(r+\tau)\right)dr+\limsup_{\tau\rightarrow  0}\left(\int^t_s\int_{\Omega(r)\times {\Omega(r+\tau)}} C|y-x|^2d\gamma_t^\tau dr\right)^\frac{1}{2}\\
&\leq \limsup_{\tau\rightarrow  0}\frac{C}{\tau}\int^t_s d_W^2\left(\mu(r),\mu(r+\tau)\right)dr+\limsup_{\tau\rightarrow  0}C\left(\int^t_s d_W^2(\mu(r),\mu(r+\tau)dr\right)^\frac{1}{2}.
\end{align*}
In view of the fact that $\mu(\cdot)$ is an absolutely continuous curve with respect to the Wasserstein metirc, we have the above expressions vanish. So
\begin{equation}\label{vhat}
\int^t_s\int_{ \mathbb{R}^d }k(x,r)n(x,r)\cdot \hat{v}(x,r)d\mu(r)dr\leq \int^t_s\int_{ \mathbb{R}^d }k(x,r)c(x,r)d\mu(r)dr.
\end{equation}
Apply this to \eqref{phil}, we get
\begin{equation}\label{phil1}
\phi\left(\mu(s)\right)\leq \phi\left(\mu(t)\right)+\int^t_s\int_{ \mathbb{R}^d }v(x,r)\cdot \hat{v}(x,r)d\mu(r)dr+\int^t_s\int_{ \mathbb{R}^d }k(x,r)c(x,r)d\mu(r)dr.
\end{equation}
Finally compare \eqref{phig} with \eqref{phil1} and make use of \eqref{vh}, we find in $\overline{\Omega}_T$ a.e. $d\mu dt$
\begin{equation}\label{hateq}
\hat{v}(x,t)=v(x,t)= P_{x,t} w(\mu)(x,t).
\end{equation}

For the stability result, considering that $\mu_1, \mu_2$ are compactly supported, the proof is essentially the same as the one in Proposition \ref{thm2}.
\end{proof}

\begin{remark}
As can be seen in the proof, for Theorem \ref{thmu} we only need $\lambda$-convexity of $V,W$ locally. Also we can weaken the condition \textsc{(O1)} to be local prox-regularity: for every ball $B_R$, $\partial \Omega(t)\cap B_R$ is $r_p$-prox-regular for some $r_p(R)>0$.
\end{remark}

\begin{remark}\label{rmkunique}
From the theorem we have the uniqueness of solutions to \eqref{eqn1} with compact support for any finite time. But it is not clear to us whether solutions can spread out to infinity far of the domain within a finite time, even with compactly supported initial data. It is unknown to us about the general uniqueness result.
\end{remark}

\begin{remark}
If the domain is non-compressing which is equivalent to $c\geq 0$, we don't require $W\in C^2$. Lemma \ref{lemu} is the only place in part one of the paper where we need the assumption.
\end{remark}

\subsection{Non-Compactly Supported Data}\label{sub2.4}

In this section we consider the equation \eqref{eqn1} with non-compactly supported initial data. 
Let $\mu\in\mathcal{P}^2(\mathbb{R}^d)$, define
\[\delta(R,\mu):= \left\{\int_{B_R^C}|x|^2 d\mu\right\}\] where $B_R^C$ denotes the complement of $B_R$. We say $\mu$ satisfies condition \textsc{(R)} if

\medskip

\begin{flushleft}
\textbf{(R)} There exists some constant $c_r>0$ such that $\delta(R,\mu) exp(c_rR)\rightarrow 0$ as $R\rightarrow +\infty$.
\end{flushleft}
This condition requires some exponentially decay of measures which is slightly more general than compact supported ones. We say a curve of measures $\{\mu(t),t\geq 0\}$ satisfy condition \textsc{(R)} locally uniformly if for each $t\in [0,T]$, $\mu(t)$ satisfies the condition for some $c_r(T).$

\begin{theorem}\label{converge}
Suppose conditions \textsc{(O1)(C1)-(C3)} hold and $\mu_0$ satisfies \textsc{(R)}. Then there exists a weak solution $\mu(\cdot)$ to equation \eqref{eqn1}. And it is an absolutely continuous curve in $\mathcal{P}_2(\mathbb{R}^d)$ which satisfies condition \textsc{(R)} locally uniformly.

Suppose $\mu^i(\cdot), i=1,2$ are two solutions with initial data $\mu^i_0$ satisfying \textsc{(R)} locally uniformly with constant $c_r$. Then for any $0<p<1$, there is $t_p,\varepsilon>0$ that for all $0\leq t\leq t_p$, $d_W(\mu^1_0,\mu^2_0)\leq \varepsilon$ we have
\[d_W\left(\mu^1(t),\mu^2(t)\right)\leq 2d_W(\mu^1_0,\mu^2_0)^p.\]
Here $t_p,\varepsilon$ only depend on $p,c_r$ and universal constants.
\end{theorem}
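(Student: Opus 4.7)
The plan is to approximate $\mu_0$ by the normalized restrictions $\mu_0^R := \mu_0(B_R)^{-1}\mu_0|_{B_R}$, which are compactly supported in $\overline{\Omega(0)}$ and converge to $\mu_0$ in $d_W$ by \textsc{(R)}. Theorem \ref{thmu}(a) then gives compactly supported weak solutions $\mu^R(\cdot)$ enjoying the uniform-in-$R$ second moment bound from \eqref{boundm2} and the half-Hölder-in-$t$ equicontinuity \eqref{wmi}. The key new ingredient is uniform propagation of \textsc{(R)}: the particle-level linear growth estimate \eqref{bound x2} shows that a point outside $B_R$ at time $t\le T$ must have started outside $B_{R/C'}$ for some universal $C'$, so
\begin{equation*}
\int_{B_R^c}|x|^2\,d\mu^R(t)\;\le\;C\int_{B_{R/C'}^c}\bigl(1+|x|^2\bigr)\,d\mu_0^R\;\le\;C\,\delta(R/C',\mu_0),
\end{equation*}
which satisfies \textsc{(R)} with a modified decay rate $c_r/C'$ uniformly in $R$.

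\textbf{Passing to the limit.} Uniform tails, second moments, and time-equicontinuity render $\{\mu^R\}$ precompact in $C([0,T];\mathcal{P}_2(\mathbb{R}^d))$ in the uniform Wasserstein topology; extract a subsequence $\mu^R\to\mu$. Since $|V|$ and $|W|$ grow at most quadratically by (C1)(C2) and the quadratic tails are uniformly controlled, $\phi(\mu^R)\to\phi(\mu)$; combined with Lemma \ref{lemu} (lower semicontinuity of the dissipation $\mathcal{E}$), the curve-of-maximal-slope inequality proved for $\mu^R$ in Theorem \ref{thmu} transfers to $\mu$, and the tangent-velocity identification via Theorem 8.3.1 of \cite{gflow} yields that $\mu$ is a weak solution to \eqref{eqn1}; the uniform tail bound above shows $\mu(t)$ inherits \textsc{(R)}.

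\textbf{Stability with exponent $p<1$.} Let $\eta:=d_W(\mu^1_0,\mu^2_0)$ and let $\gamma_t$ be an optimal transport plan between $\mu^1(t),\mu^2(t)$. Following Proposition \ref{thm2}, Theorem 8.4.7 \cite{gflow} together with $\lambda$-convexity of $V,W$, $r_p$-prox regularity, and the bound $|k^i(x,t)|\le C(1+|x|)$ from \eqref{bound k} yields
\begin{equation*}
\tfrac{d}{dt}d_W^2\bigl(\mu^1(t),\mu^2(t)\bigr)\;\le\;C\,d_W^2+C\!\int_{\mathbb{R}^{2d}}\!\bigl(1+|x|+|y|\bigr)|x-y|^2\,d\gamma_t.
\end{equation*}
Split the last integrand at $|x|+|y|\le R$ versus $>R$: the inner region contributes $CR\,d_W^2$, while the outer region is bounded by $C\int_{B_{R/2}^c}(|x|+|x|^3)\,d\mu^i(t)\le C e^{-c_r'R}$, obtained from \textsc{(R)} on $\mu^i(t)$ by a layer-cake argument that promotes the $|x|^2$-tail to an $|x|^3$-tail with a slightly smaller decay rate $c_r'<c_r$. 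Gronwall then gives
\begin{equation*}
d_W^2\bigl(\mu^1(t),\mu^2(t)\bigr)\;\le\;e^{CRt}\,\eta^2+C\,e^{CRt-c_r'R}.
\end{equation*}
Choosing $R=(2/c_r')\log(1/\eta)$ balances the two terms and produces $d_W(\mu^1(t),\mu^2(t))\le C\,\eta^{1-Ct/c_r'}$. For any $p\in(0,1)$, set $t_p:=c_r'(1-p)/C$ so that $1-Ct/c_r'\ge p$ for $t\le t_p$, and take $\varepsilon$ small enough that the prefactor can be absorbed via $\eta^{p(t)-p}\le 2/C$.

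\textbf{Main obstacle.} The heart of the matter is trading the $R$-linear Gronwall constant—coming from the projection correction $|k^i|\le C(1+|x|)$ that forces stability constants to grow with the support radius—against the exponential decay $e^{-c_r'R}$ supplied by \textsc{(R)}. This trade-off inevitably produces the factor $1-Ct/c_r'<1$, and the loss of the linear exponent is genuine: Theorem \ref{example} will exhibit counterexamples showing that neither the restriction $p<1$ nor the exponential tail hypothesis \textsc{(R)} can be relaxed in general.
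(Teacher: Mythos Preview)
Your overall strategy matches the paper's: approximate $\mu_0$ by compactly supported data, pass the curve-of-maximal-slope structure to the limit, and for stability split the prox-regularity correction into a near region (giving $CR\,d_W^2$) and a far region (controlled by exponential tails), then Gronwall and optimize in $R$. Two technical points deserve comment. First, the paper approximates by \emph{discrete} particle data and shows the sequence is Cauchy directly using the same stability estimate you derive, rather than truncating and invoking compactness; both routes work (the paper notes truncation as an alternative), but be aware that Lemma~\ref{lemu} and the chain-rule inequality \eqref{vhat} were established under compact support and must be re-justified via a tail cut-off---the paper does this explicitly, and your sketch should too. Second, and more substantively, in the far region the paper does \emph{not} use prox-regularity: it uses the trivial bound $|\langle k\,n,x-y\rangle|\le |k|\,|x-y|$, yielding an integrand $(1+|x|+|y|)|x-y|$ that is only quadratic in $(x,y)$ and hence controlled by the second-moment tail $\delta(L,\mu_0)$ directly. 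Your integrand $(1+|x|+|y|)|x-y|^2$ is cubic, which forces third-moment tails; these still decay exponentially under \textsc{(R)}, but your stated bound $C\int_{B_{R/2}^c}(|x|+|x|^3)\,d\mu^i$ is incomplete---it drops cross terms such as $\int_{\{|x|>R/2\}}|y|^3\,d\gamma_t$, which are not a priori tail integrals of a single marginal. They can be handled by a further sub-split on $|y|\lessgtr R/2$, but as written that step is a gap; the paper's use of the linear-in-$|x-y|$ bound avoids this complication entirely.
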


\begin{proof}
For existence, we use the particle approximation method as before: let $\mu^n(\cdot)=\sum_{i=1}^{k(n)}m_i \delta_{x_i^n(\cdot)}$ be solutions to equation \eqref{eqn1} with discrete initial data $\mu^n_0\rightarrow \mu_0$. First let us assume the convergence of $\mu^n(\cdot)$ and show the limit is a solution. Expressions or estimates \eqref{cl} \eqref{dn} \eqref{u'9} still hold. But we need to be careful on \eqref{phi n convergence} \eqref{vhat} and Lemma \ref{lemu} since the solutions are no longer supported in a compact set. If \eqref{phi n convergence}, \eqref{lemu1} and \eqref{vhat} are valid, we deduce \eqref{phig} \eqref{hateq} and from which we draw the conclusion.

To show \eqref{lemu1}, we use truncation method. Recall estimate \eqref{bound x2}, within time $T$, we get
\begin{equation}
    \label{delta R uniform}
    \delta(R,\mu^n(t))\leq \delta(\frac{R}{C_T},\mu^n_0)
\end{equation} which converges to $0$ exponentially fast as $R\rightarrow \infty$. 

By \eqref{bound F}\eqref{bound x2} and definition of $ P_{x,t} $
\[|k^n(x,t)|\leq C(1+|x|),\quad | P_{x,t} w^n(x,t)|\leq |w^n(x,t)|+|c(x,t)|\leq C(1+|x|).\]
And similar linear bounds also hold for $k(x,t), P_{x,t} w (x,t)$. So for any small $\epsilon>0$, we can choose $R$ large enough such that
\begin{equation*}\label{BR}
\left|\int_s^t\int_{ B_R^C}E(w^n)d\mu^ndr\right|\leq C\int_{B_R^C}|x|^2d\mu^ndr<\epsilon
\end{equation*}
for all $n$. Then we only need to consider $
\tilde{\mu}^n(t)=\sum_j m_j\delta_{\{x_j(t),|x_j(0)|\leq R\}}$. The contribution for particles starting outside $B_R$ will be under control by \eqref{bound x2} again. Now since the integration is inside a compact set, the proof will then follows from Lemma \ref{lemu}. The proof of \eqref{vhat} is similar. 

For \eqref{phi n convergence}, let us only write down the proof of $\lim_{n\to\infty}\mathcal{W}({\mu^n(t)})=\mathcal{W}({\mu(t)})$. Recall that $\gamma$ denotes the optimal transport plan between $\mu^n(t)$ and $\mu(t)$. For any $\epsilon>0$
\begin{align*}
    \left|\mathcal{W}({\mu^n(t)})-\mathcal{W}({\mu(t)})\right|&\leq \frac{1}{2} \int_{\mathbb{R}^{4d}}|W(x-x')-W(y-y')|d\gamma(x,y)d\gamma(x',y')\\
    &\leq C\int_{\mathbb{R}^{4d}}(1+|x|+|x'|+|y|+|y'|)(|x-x'-y+y'|)d\gamma(x,y)d\gamma(x',y')\\
    &\leq \epsilon \int_{\mathbb{R}^{4d}}(1+|x|^2+|x'|^2+|y|^2+|y'|^2)d\gamma(x,y)d\gamma(x',y')\\
    &\quad +C_\epsilon \int_{\mathbb{R}^{4d}}(|x-y|^2+|x'-y'|^2)d\gamma(x,y)d\gamma(x',y') \\
    &\leq \epsilon(1+2m_2(\mu^n(t))+2m_2(\mu(t)))+2C_\epsilon d^2_W(\mu^n(t),\mu(t)).
\end{align*}
In the above we used condition (C2). By \eqref{delta R uniform}, we know that the second moment of $\mu^n,\mu$ are bounded locally uniformly in time. Then if $\mu^n(t)$ converges $\mu(t)$ in $d_W$, we deduce
$\lim_{n\to\infty}\mathcal{W}({\mu^n(t)})=\mathcal{W}({\mu(t)})$. Similarly $\lim_{n\to\infty}\mathcal{V}({\mu^n(t)})=\mathcal{V}({\mu(t)})$ and \eqref{phi n convergence} follows.

\medskip

Now we show the convergence of $\mu^n(\cdot)$. We use the notation $\gamma^B_A$ as the restriction of $\gamma\in \mathcal{P}(\mathbb{R}^{2d})$ in $A\times B \subset  \mathbb{R}^d \times  \mathbb{R}^d $. Also we denote $A^C$ as the complement of $A$ in $ \mathbb{R}^d $ while $(\gamma^B_A)^C$ as the restriction of $\gamma$ on $(A\times B)^C$.
Without loss of generality, assume that $\delta(R,\mu^n_0)$ are comparable to $\delta(R,\mu_0)$ for all $n$ and $R>1$. 
For simplicity of notation, we write $\gamma_t$ as the optimal transport plan between $\mu^n(t)$ and $\mu^m(t)$. Similarly as before, we have
\begin{align}
\frac{1}{2}\frac{d}{d t}d_W^2\left(\mu^n(t),\mu^m(t)\right)&=\int_{\mathbb{R}^{2d}}\langle v^n(x,t)-v^m(y,t),x-y\rangle d\gamma_t(x,y)\nonumber\\
&\leq Cd_W^2(\mu^n,\mu^m)-\int_{\mathbb{R}^{2d}}\langle k^n(x,t) n(x,t)-k^m(y,t)n(y,t),x-y\rangle d\gamma_t(x,y).\label{kterm}
\end{align}
By prox-regularity, the last of \eqref{kterm}
\[
\leq \int_{\mathbb{R}^{2d}}\min \left\{\frac{|k^n|+|k^m|}{r_p}|x-y|^2,(|k^n|+|k^m|)|x-y|\right\}d\gamma_t(x,y).\]
For any $L>>1$, let $C'$ be the constant as given in \eqref{bound x2} and $L':=C'L$. 
Then the above
\begin{equation*}\label{2part}
\leq \int_{\mathbb{R}^{2d}} \frac{|k^n|+|k^m|}{r_p}|x-y|^2d\gamma^{B_{L'}}_{B_{L'}}
+\int_{\mathbb{R}^{2d}}(|k^n|+|k^m|)|x-y|d(\gamma^{B_{L'}}_{B_{L'}})^C.\end{equation*}
By \eqref{bound k}, 
\[\int_{\mathbb{R}^{2d}} \frac{|k^n|+|k^m|}{r_p}|x-y|^2d\gamma^{B_{L'}}_{B_{L'}}
\leq CLd_W^2(\mu^n,\mu^m),\]
\[\int_{\mathbb{R}^{2d}}(|k^n|+|k^m|)|x-y|d(\gamma^{B_{L'}}_{B_{L'}})^C
\leq C\int_{\mathbb{R}^{2d}}(|x|^2+|y|^2)d(\gamma^{B_{L'}}_{B_{L'}})^C
\]
%\[\leq C\int_{\mathbb{R}^{2d}}(1+|x|+|y|)|x-y|d_W(\gamma^{B_{L'}}_{B_{L'}})^C\]
\begin{equation}\label{4term}
\leq C\left(\int_{|x|\geq {L'}} |x|^2d\mu^n+\int_{|y|\geq {L'}} |y|^2d\mu^m+m_2(\mu^m)\int_{|x|\geq {L'}} 1d\mu^n +m_2(\mu^n)\int_{|y|\geq {L'}} 1d\mu^m \right)
\end{equation}
By \eqref{bound x2}, for all $0\leq t\leq T$, if $x_i(t)\in B_{L'}^C$, then $x_i(0)\in B_{L}^C$ and $|x_i(t)|\leq C'(|x_{i}(0)|+1)$. This, combining with \eqref{boundm2}, gives 
\[\eqref{4term}\leq CC'^2\left(\int_{|x|\geq L} |x|^2d\mu^n(0)+\int_{|y|\geq L} |y|^2d\mu^m(0)\right)\leq CC'^2\delta(L,\mu_0).\]
Note $C'$ is a universal constant. In all, finally we have
\[
\frac{d}{d t}d_W^2(\mu^n,\mu^m) \leq CLd_W^2(\mu^n,\mu^m) + C\delta(L,\mu_0).
\]
This shows
\begin{equation*}\label{cauchy}
d_W^2\left(\mu^n(t),\mu^m(t)\right)\leq C\delta(L,\mu_0) \frac{\exp(CLt)-1}{L}+d_W^2(\mu^n_0,\mu^m_0)\text{ } \exp(CLt).
\end{equation*} Select $t_0=\frac{c_r}{C}$ where $c_r$ comes from \textsc{(R)}. 
By the condition, $\delta(L,\mu_0)\exp(c_rL)$ can be any small if $L$ is large. Recall that $\{\mu^n_0\}$ is Cauchy and we further take $d_W(\mu_0^n,\mu_0^m)$ to be small, thus $\{\mu^n(t)\}$ is also Cauchy for $0\leq t\leq t_0$. Then we can consider each time interval: $[0,t_0],[t_0,2t_0]...$ inductively and we proved that $\mu^n(t)\rightarrow \mu(t)$ for all $t\leq T$. 

\medskip

For any $0< p<1$, write $\varepsilon:=d_W(\mu^n_0,\mu^m_0)$ and choose $L=\frac{-2 \log\varepsilon}{c_r}$, $t_0$ as above. For $t\leq t_0(1-p)$, 
\[d_W^2\left(\mu^n(t),\mu^m(t)\right)\leq C\delta(L,\mu_0) \exp(c_rL-c_rpL)+\varepsilon^{2p}.\]
Let $\varepsilon$ be small enough and $L$ is then large enough. By \textsc{(R)},
\[d_W^2\left(\mu^n(t),\mu^m(t)\right) \leq C\delta(L,\mu_0) \exp(c_rL)\varepsilon^{2p}+\varepsilon^{2p} \leq 4d_W^{2p}(\mu^n_0,\mu^m_0).\]

\medskip

Notice if $\mu(\cdot)$ solves the equation \eqref{eqn1},
then $m_2(\mu(t))\leq C$ for $t\in[0,T]$. The second claim about the stability of solutions satisfying condition \textsc{(R)} follows from the above argument for the discrete type solutions. 
This shows that solutions satisfying condition \textsc{(R)} are unique.
\end{proof}

\begin{remark}\label{convex}
We comment on several situations where the condition (R) can (or possibly) be dropped.
\begin{flushleft}
(i) As in Theorem 1.9 \cite{prox}, if $\Omega(t)$ is convex for all $t$ and $\mu^i(t)$($i=1,2$) are solutions with general initial data $\mu^i_0\in\mathcal{P}_2(\overline{\Omega(t)})$, then there exists a universal constant $C$ such that
\[d_W\left(\mu^1(t),\mu^2(t)\right)\leq Cd_W(\mu^1_0,\mu^2_0).\]
Here we do not need any assumptions on the decay of solutions. 
The proof follows from the observation that in \eqref{kterm}
$\langle k(x,t)n(x,t),x-y\rangle\geq 0 $ by the convexity. And as a corollary we have the uniqueness result.\\

(ii) If $\nabla V,\nabla W$ and $c(x,t)$ are uniformly bounded, we can conclude the same as in (i).\\

(iii) We guess that for the non-local term if $ W$ is compactly supported, there is a better stability result. 

\end{flushleft}
\end{remark}

\subsection{Examples and Stability of Solutions}

In this section, we will show that the stability result in Theorem \ref{converge} cannot be improved to 
$
d_W\left(\mu^1(t),\mu^2(t)\right)\leq Cd_W(\mu^1_0,\mu^2_0)$
as long as the domain is unbounded and non-convex.
Moreover we give examples showing that without condition \textsc{(R)}, the stability of solutions is even weaker than that in Theorem \ref{converge}. All these suggest that the stability of solutions to \eqref{eqn1} is strongly related to the decay of initial data at infinity. 

\begin{theorem}\label{example}
There exists $V,W,\Omega$ satisfy conditions \textsc{(C1)-(C3)(O1)} such that the following holds for any $t_0>0$. 
\begin{flushleft}
(i) There is $\mu_0\in\mathcal{P}_2(\overline{\Omega})$ and $\mu^n_0\rightarrow \mu_0$ (write $\mu^n(t)$ as a solution to equation \eqref{eqn1} with initial data $\mu^n_0$) that
$\mu(\cdot),\mu^n(\cdot)$ satisfy \textsc{(R)} locally uniformly and
\[
\liminf_n d_W\left(\mu(t_0),\mu^n(t_0)\right)/d_W(\mu_0,\mu^n_0)=+\infty.\]

(ii) For any $p>\frac{1}{2}$, there is $\mu_0\in\mathcal{P}_2(\overline{\Omega})$ and $\mu^n_0\rightarrow \mu_0$ that
\[
\liminf_n d_W\left(\mu(t_0),\mu^n(t_0)\right)/d_W^p(\mu_0,\mu^n_0)=+\infty.\]
\end{flushleft}
\end{theorem}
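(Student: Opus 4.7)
The plan is to construct a single triple $(\Omega,V,W)$ in which sliding along a circular obstacle placed at distance $a$ from the origin produces an angular amplification rate proportional to $a$; a chain of such obstacles going to infinity will then allow the amplification to outrun any decay rate on the masses. Specifically, take
\[
\Omega:=\mathbb{R}^2\setminus\bigcup_{k\geq 1}B_1((3k,0)),\qquad V(x)=\tfrac12|x|^2,\qquad W\equiv 0.
\]
The disks are pairwise disjoint and each is $r$-prox-regular for every $r<1$, so (O1) holds with e.g.\ $r_p=1/2$; $V$ is $1$-convex with $|\nabla V(x)|=|x|\leq 1+|x|$, hence (C1)--(C3) are satisfied. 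Inside $\Omega$ the flow is $\dot x=-x$. The key computation: on $\partial B_1((a,0))$, parameterizing boundary points by $(a+\cos\theta,\sin\theta)$, the tangential component of the force $-\nabla V=-x$ equals exactly $a\sin\theta$, so a sliding particle obeys $\dot\theta=a\sin\theta$ and $\tan(\theta(t)/2)=\tan(\theta_0/2)\,e^{at}$. The amplification factor over time $t_0$ is therefore $e^{at_0}$, unbounded as $a\to\infty$. Detachment occurs when $v\cdot n=a\cos\theta+1$ becomes negative, i.e.\ just past $\theta=\pi/2$, and subsequent radial motion never brings the perturbed particle back near its unperturbed twin; a direct trajectory-tracking argument then gives $|P_n-(3n+1,0)|\geq c(t_0)>0$ uniformly in $n$ (using that $y\geq e^{-t_0}$ is preserved throughout $[0,t_0]$).

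\textbf{Part (i).} Set $\mu_0:=Z^{-1}\sum_{k\geq 1}e^{-k^2}\delta_{(3k+1.1,\,0)}$, which satisfies \textsc{(R)} for every $c_r>0$, and define $\mu^n_0$ by replacing the $n$-th atom of $\mu_0$ with $\delta_{(3n+1.1,\,1/n)}$; the identity coupling gives $d_W(\mu_0,\mu^n_0)=Z^{-1/2}e^{-n^2/2}/n$. The $n$-th particle of $\mu(t)$ moves radially to $(3n+1,0)$ in time $O(1/n)$ and then stops (the projection of the purely radial velocity vanishes there). The $n$-th particle of $\mu^n(t)$ reaches $\partial B_1((3n,0))$ at angle $\theta_0\approx 1/n$, then slides until detachment at $\theta\approx\pi/2+1/(3n)$ after a sliding time $\log(6n)/(3n)\ll t_0$, and by $t_0$ sits at distance $\geq c(t_0)>0$ from $(3n+1,0)$. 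Therefore $d_W(\mu(t_0),\mu^n(t_0))\geq c(t_0)Z^{-1/2}e^{-n^2/2}$, so the ratio is at least $c(t_0)\,n\to+\infty$. Since $V$ pulls every particle inward and the projection only permits sliding, $|x_k(t)|\leq|x_k(0)|$ for all $k,t$, so $\delta(L,\mu(t))\leq\delta(L,\mu_0)$ and \textsc{(R)} holds locally uniformly along $\mu(\cdot)$ and $\mu^n(\cdot)$.

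\textbf{Part (ii).} Keeping the same $(\Omega,V,W)$, take $\mu_0:=C\sum_{k\geq 1}k^{-4}\,\delta_{(3k+1.1,\,0)}$ (finite second moment, only polynomial decay, so \textsc{(R)} fails) and $\mu^n_0$ obtained by shifting the $n$-th atom to $(3n+1.1, n^{-2})$. Then $d_W(\mu_0,\mu^n_0)\sim n^{-4}$. The same sliding analysis with $\theta_0\approx n^{-2}$ amplified by $e^{3nt_0}$ again saturates for $n$ large (since $e^{3nt_0}/n^2\to\infty$), yielding $d_W(\mu(t_0),\mu^n(t_0))\gtrsim n^{-2}$, and hence
\[
\frac{d_W(\mu(t_0),\mu^n(t_0))}{d_W(\mu_0,\mu^n_0)^p}\;\gtrsim\; n^{\,4p-2},
\]
which diverges precisely when $p>1/2$.

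The main conceptual step is finding this geometry: the factor $a$ in $\dot\theta=a\sin\theta$ grows with the distance of the obstacle to the origin, and this growth is exactly what lets the amplification beat any fixed decay rate of $m_n$ in the Wasserstein ratio. The main technical obstacle is that in part (ii) Theorem \ref{converge} does not directly supply a weak solution since $\mu_0$ fails \textsc{(R)}; I would instead construct $\mu(t)=\sum_k m_k\delta_{x_k(t)}$ by hand, noting that each trajectory $x_k(t)$ is an explicit concatenation of radial segments and sliding arcs on single disks, so that the weak formulation of \eqref{eqn1} is verified atom by atom against any $\varphi\in C_c^\infty(\mathbb{R}^d\times(0,T))$. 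A secondary bookkeeping point is that the hit times $O(1/n)$ and sliding times $O(\log n/n)$ are uniformly negligible compared to $t_0$, which follows directly from the explicit formulas.
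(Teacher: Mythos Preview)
Your construction is correct and constitutes a genuinely different example from the one in the paper. The paper works with the stationary domain $\Omega=\{y\geq\cos(2\pi x)\}$ and the indefinite potential $V=xy$; particles slide along the wavy boundary, and the autonomous ODE $x_t'=\vartheta_t$ has an unstable equilibrium near each integer $j$ whose linearization rate grows like $j$, so a perturbation of size $j^{-\alpha}$ pushes the particle toward the \emph{next} stable equilibrium, producing a uniform order-one separation. The paper then perturbs \emph{all} atoms with index $j\geq n$ and chooses weights $m_j\sim e^{-j}$ for (i) and $m_j\sim j^{-\beta}$ for (ii). Your mechanism is different: you exploit that on $\partial B_1((a,0))$ the sliding ODE for the confining potential $\tfrac12|x|^2$ is $\dot\theta=a\sin\theta$, so the unstable point $\theta=0$ has rate $a=3k$, and you perturb a \emph{single} atom off the $x$-axis. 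What your approach buys is a strictly convex potential and a very transparent separation argument: since every atom of $\mu(t_0)$ lies on $\{y=0\}$ while the perturbed atom has $y\geq e^{-t_0}(1+o(1))$, the lower bound $d_W^2\bigl(\mu(t_0),\mu^n(t_0)\bigr)\geq m_n\,c(t_0)^2$ follows immediately by projecting any admissible plan onto the second coordinate (this step deserves one explicit sentence, since the ``identity coupling'' need not be optimal in general). What the paper's approach buys is that no trajectory tracking past the first obstacle is needed: once a particle is pushed past the unstable equilibrium it simply converges to the adjacent stable one, whereas in your picture the detached particle may graze further disks, and you need the differential inequality $\dot y\geq -y$ (valid both in free fall and while sliding, the latter precisely because the sliding condition is $a\cos\theta+1>0$) to control the subsequent motion. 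Both routes give the same threshold $p=\tfrac12$ in (ii) by the same balance $m_n^{1/2}/(m_n^{1/2}\cdot\text{shift})^{p}$.
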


\begin{proof}
Consider the following stationary domain in $\mathbb{R}^2$
\[\Omega:=\{(x,y)| \quad y\geq \cos (2\pi x)\}\]
and the equation
\begin{equation}\label{eqexam}
\frac{\partial}{\partial t}\mu+\nabla\cdot (\mu P(-\nabla V) )=0\quad \text{with } V=xy.
\end{equation}
Here $P$ is the projection operator defined in \eqref{Pt} with zero boundary speed.

Let us start by solving the equation with initial data $\mu_0=\delta_{(x_0,\cos(2\pi x_0))}$ where $x_0$ is close to some large integer. Suppose $\delta_{(x_t,y_t)}$ is a solution, then by simple calculations 
\[(x'_t,y'_t)=P(-\nabla V(x_t,y_t)=P(-y_t,-x_t).\]
It is not hard to see that if $x_t$ is large enough, the delta mass moves along the boundary. So
$P(-y_t,-x_t)=P(- \cos(2\pi x_t),-x_t)$ and
\begin{equation}
v_t:=(x'_t,y'_t)=\left(\vartheta_t,-2\pi \sin(2\pi x_t)\vartheta_t\right) \text{ where }\vartheta_t=\frac{-(\cos(2\pi x_t)-2\pi x_t \sin(2\pi x_t))}{1+4\pi^2 \sin^2(2\pi x_t)}.\label{Nsim}
\end{equation} 
Notice the equation $x_t'=\vartheta_t$ is an autonomous system. For each large integer $N$, there are two equilibrium points in $[N-\frac{1}{2},N+\frac{1}{2}]$ namely the solutions of $2\pi x=\cot(2\pi x)$. One equilibrium point is close to $N-\frac{1}{2}$ which is stable and the other is close to $N$ which is unstable. We denote the stable one as $ (N)_*^1$ and the other as $(N)_*^2$. We can show $(N)_*^1- (N-\frac{1}{2})\approx  (N)_*^2-N\approx \frac{1}{N}$.

Now consider
\[\mu_0=\sum_{j=1}^{\infty} m_j \delta_{\left(j,\cos(2\pi j)\right)}.\]
Also we construct a family of $\mu^n_0\rightarrow\mu_0$. Denote $j_\alpha:= j+(j)^{-\alpha}$ with $0<\alpha<1$ and let
\[\mu^n_0=\sum_{j=1}^{n-1} m_j \delta_{(j,\cos(2\pi j))}+\sum_{j=n}^{\infty} m_j \delta_{\left(j_\alpha,\cos(2\pi j_\alpha)\right)}.\]

As usual, we have the solutions $\mu^n(t)=\sum_j\delta_{(x_j^n(t),y_j^n(t))}$ and $\mu(t)=\sum_j\delta_{(x_j(t),y_j(t))}$ with $\mu^n(t)=\mu_0^n,\mu(t)=\mu_0$. So for any fix $t_0>0$:
\[
d_W^2\left(\mu(t_0),\mu^n(t_0)\right)/d_W^2(\mu_0,\mu^n_0)\gtrsim \]
\[\sum_{j\geq n} m_j\left(\left|x_j(t_0)-x_j^n(t_0)\right|^2+\left|y_j(t_0)-y_j^n(t_0)\right|^2\right)/\left(\sum_{j\geq n} m_j (3j)^{-2\alpha}\right).\]
Because $\alpha<1$ and $j_\alpha>(j)_*^1$, for $j$ large enough we have $(x_j^n(t),y_j^n(t))$ starts at $\left(j_\alpha,\cos(2\pi j_\alpha)\right)$ and moves towards $((j+1)_*^1,\cos(2\pi (j+1)_*^1))$. However the mass starting from $(j,\cos(2\pi j))$ will move towards $((j)^1_*,\cos(2\pi (j)_*^1))$. 
Note by \eqref{Nsim} if $x_j^n(t)-j\leq \frac{1}{4}$, we get $\frac{d}{d t}x_j^n(t)\geq C j^{1-\alpha}$. Hence for any fix $t_0$ if $n$ is large enough, $x_j^n(t_0)-3j\geq \frac{1}{4}$. While $(x_j(t),y_j(t))$ goes to the opposite direction, so the distance between them is larger than some constant say $\frac{1}{4}$. And it cannot be too large since their limits are $((j+1)_*^1,\cos (j+1)_*^1),( (j)_*^1,\cos (j)_*^1))$. 

At last we choose $m_j=C_m e^{-j}$ where $C_m$ is a constant satisfying $C_m\sum_j e^{-j}=1$. Then it is straight forward to check that $\mu,\mu^n$ satisfy \textsc{(R)}. By direct computation we have
\[
\liminf_n d_W^2\left(\mu(t_0),\mu^n(t_0)\right)/d_W^2(\mu_0,\mu^n_0)\gtrsim 
\liminf_n(\sum_{j\geq n} C m_j)/(\sum_{j\geq n} m_j j^{-2\alpha})\geq \lim_n C n^{2\alpha}=+\infty.\]
We claim that this is the example promised which shows that $d_W(\mu(t),\mu^n(t))$ cannot be bounded by $C d_W(\mu_0,\mu^n_0)$ for $C$ independent of $t$ and $d_W(\mu_0,\mu^n_0)$.

Next we select $m_j=C_m' j^{-\beta}$ with $\beta>3$ where $C_m'$ is some constant such that the total mass of $\mu_0$ is $1$. In this case $\delta(R,\mu_0)\approx \sum_{j\geq R}j^{-\beta} j^{2}\approx R^{-\beta+1}$ which fails condition \textsc{(R)}. On the other hand
\[d_W^{2p}(\mu_0,\mu^n_0)\approx (\sum_{j\geq n} j^{-\beta}j^{-2\alpha})^p \approx n^{(-\beta-2\alpha+1)p} \text{ and }\]
\[d_W^{2}\left(\mu(t_0),\mu^n(t_0)\right)\gtrsim (\sum_{j\geq n} j^{-\beta}) \approx n^{-\beta+1}.\]
Thus for any $p> \frac{\beta-1}{\beta+2\alpha-1}$, we deduce
\begin{equation*}
\liminf_n d_W^2\left(\mu(t_0),\mu^n(t_0)\right) / d_W^{2p}(\mu_0,\mu^n_0)=+\infty \quad \text{ for all }t_0>0.
\end{equation*}
If selecting $\alpha=1-\epsilon,\beta=3-\epsilon$, then $p$ can be any close to $\frac{1}{2}$. This shows that in Theorem \ref{converge} if without \textsc{(R)}, $p$ cannot be greater than $\frac{1}{2}$. So at least we can claim that the stability is weaker. %This illustrates that stability of the solution depends largely on the decay of initial measure in some sense.
\end{proof}

\section{Part Two. Second Order Equations}\label{part2}

In the second part of this paper we show the well-posedness of the second order continuity equation \eqref{eqn2} and then we send the diffusion term to $0$. If $\Omega(t)$ is bounded and convex for each $t$, we will prove that \eqref{eqn2} is indeed the vanishing viscosity approximation of \eqref{eqn1}.

\subsection{Assumptions and JKO Scheme}\label{sub3.1}

We make the following assumptions. We will make a remark about several generalizations later.
 
\begin{flushleft}
\textbf{(O2)} The lateral boundary of $\Omega_T$ is uniformly $C^1$ in space. There exists $L>0$ that
 \begin{equation*}
d_H(\Omega(t),\Omega(s) )\leq L|t-s| \text{ for } 0\leq s,t\leq T.
\end{equation*}
\end{flushleft}
Here $d_H$ is the Hausdorff distance. From this we know that there exists $r_p>0$ such that both $\partial \Omega(t),\partial \Omega(t)^C$ are $r_p$-prox regular for all $t\in[0,T]$.
For $V,W$, we assume \textsc{(C1)-(C3)} hold and furthermore we assume: %which are made mainly for the variational scheme and the uniqueness result. 
\begin{flushleft}
\textbf{(C4)} $V(\cdot),W(\cdot)\in C^2(\mathbb{R}^d)$ are bounded below. 
\end{flushleft}

Recall that the associated energy $\phi^\epsilon$ is defined in \eqref{phid}.
We define
\textit{the proper domain of functional $\phi^\epsilon$} is
\[Dom(\phi^\epsilon,t):=\left\{\mu\in \mathcal{P}_2(\overline{\Omega(t)}),\text{ } \phi^\epsilon(\mu)<+\infty\right\}.\]
Notice there is no difference between $\mu\in Dom(\phi^1,t)$ and $\mu\in Dom(\phi^\epsilon,t)$ for some $\epsilon>0$. Next
as a convention, 
\begin{equation*}
    \frac{\nabla u}{u}:=\begin{cases}
\frac{\nabla u}{u} &\text{ if }u\ne 0, \\
0 &\text{ if } \nabla u= 0,\\
+\infty &\text{ if } \nabla u\ne 0, u=0.
\end{cases}
\end{equation*}

\medskip

Without loss of generality, we only prove well-posedness for $\epsilon=1$. We have the following equation:
\begin{equation}\label{eqn2'}
\left\{\begin{aligned}
&\frac{\partial}{\partial t}\mu-\nabla\cdot ( \nabla \mu+\nabla V \mu+(\nabla W *\mu)\mu )= 0 & \text{ in }&\Omega_T,\\
&\left(\nabla \mu+\nabla V \mu+(\nabla W *\mu)\mu + c\mu\right)\cdot n=0 &
\text{ on }& \partial_l\Omega_T,\\
&\mu=\mu_0 & \text{ on }&  \Omega(0).
\end{aligned}
\right.
\end{equation}
Suppose $\mu_0 \in Dom(\phi^1,0)$ and conditions \textsc{(C1)-(C4)(O1)(O2)} hold.
We use the following variant of the celebrated JKO scheme. Fix a small time step $\tau>0$, define $J_{\tau,t}: \mathcal{P}_2^a\left(\Omega(t)\right)\rightarrow \mathcal{P}_2^a\left(\Omega(t+\tau)\right)$ by
\begin{equation}\label{JKOscheme}
J_{\tau,t}(\mu)\in argmin_{v\in \mathcal{P}_2\left(\Omega(t+\tau)\right)}\left\{\frac{1}{2\tau}d_W^2(\mu,v)+\phi^1(v)\right\}.    
\end{equation}
First we show the existence of such minimizers. 
With the assumptions (C1)-(C4) on $V,W$, we have $\phi^1$ is lower semi-continuous, coercive, compact. Then 
\[\inf_{v\in \mathcal{P}_2(\Omega(t+\tau))}\left\{\frac{1}{2\tau}d_W^2(\mu,v)+\phi^1(v)\right\}\] 
is bounded below. And we can find a sequence of measures whose energy converges to the infimum and they all belong to $\mathcal{P}_2^a$ due to the internal energy. Then lower semi-continuity of $\phi^1$ and compactness guarantee the existence of the limit. Details can be found in section 2.1 in \cite{gflow} or Lemma 4.2 of \cite{wuli}. Actually if $\{\Omega(t)\}$ is convex for all $t$, we have the uniqueness of the minimizer. However, here we only need the existence result.

\subsection{Some Estimates}

First we prove a technical lemma which is enlightened by Corollary 2.6 in \cite{sweeping}. It will be used to compare $\mu(t)$ with $J_{\tau,t}(\mu(t))$ whose support is different.  

\begin{proposition}\label{v vs u}
Suppose the domain satisfies condition \textsc{(O2)}. Then for $t\in [0,T-\tau]$ and $\mu\in\mathcal{P}_2^a\left(\Omega(t)\right)$, there exists a Lipschitz continuous map $\textbf{t}: \Omega(t)\rightarrow \Omega(t+\tau)$ such that
\begin{eqnarray}
&&\|\textbf{t}-\textbf{i}\|_{L^2(\Omega(t),\mu)}\leq CL\tau,\label{t i}\\
&&  \det (D\textbf{t})\geq 1-CL\tau \text{ for a.e. }x\in\Omega(t). \label{det D}
\end{eqnarray}
Here $C$ is some constant independent of $t,\tau$ which only depends on the geometry of $\Omega_T$. %$q<1$ which is going to be determined.
\end{proposition}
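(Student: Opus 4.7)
The plan is to build $\mathbf{t}$ as a near-identity perturbation of the form $\mathbf{t}(x)=x+\tau V(x)$, where $V:\mathbb{R}^d\to\mathbb{R}^d$ is a smooth vector field (possibly depending on $t$, but with bounds uniform in $t\in[0,T]$) adapted to how $\partial\Omega(s)$ moves between $s=t$ and $s=t+\tau$. If $V$ can be chosen with $\|V\|_\infty\leq CL$ and $\|\nabla V\|_\infty\leq C$, then \eqref{t i} is immediate in $L^\infty$ (hence in $L^2(\mu)$), and expanding $\det(I+\tau\nabla V)=1+\tau\,\mathrm{div}\,V+O(\tau^2)$ yields \eqref{det D} once $\tau$ is small enough. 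The heart of the matter is the existence of such a $V$ that also satisfies $x+\tau V(x)\in\overline{\Omega(t+\tau)}$ for every $x\in\overline{\Omega(t)}$.

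I would construct $V$ in three steps. First, at each boundary point $x_0\in\partial\Omega(t_0)$, the uniform $C^1$ regularity in (O2) provides local coordinates in which $\partial\Omega(s)$ is a graph $x_d=f_s(x')$ with $f_s$ uniformly $C^1$ in $x'$; the Hausdorff Lipschitz bound $d_H(\Omega(t),\Omega(s))\leq L|t-s|$ forces $f_s$ to be $CL$-Lipschitz in $s$, so for $\alpha$ slightly larger than this constant the translation $(x',x_d)\mapsto(x',x_d-\alpha L\tau)$ sends $\overline{\Omega(t)}\cap U$ into $\Omega(t+\tau)$. Second, since $\cup_{s\in[0,T]}\partial\Omega(s)\times\{s\}$ is covered by finitely many such charts (after a compactness argument in the time parameter and local finite-cover properties of the lateral boundary), a subordinate partition of unity assembles the local inward directions into a single field $V(x,t)$ on a tubular neighborhood $\{x:\mathrm{dist}(x,\partial\Omega(t))<2L\tau\}$ with the required bounds. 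Third, I multiply by a smooth cutoff depending on $\mathrm{dist}(x,\partial\Omega(t))$ so that $V(\cdot,t)\equiv 0$ outside a slightly larger tube; points $x\in\Omega(t)$ lying outside this tube automatically lie in $\Omega(t+\tau)$ by the Hausdorff control, so the identity map on them is consistent. The $r_p$-prox-regularity of $\overline{\Omega(t+\tau)}$ supplied by (O2) then confirms that the displaced point $x+\tau V(x,t)$ remains inside $\overline{\Omega(t+\tau)}$ throughout the tube.

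The main obstacle is the globalization in the second step: producing one vector field whose $C^1$ norm in $x$ is bounded independently of $t$ and $\tau$, despite the fact that the local inward normals rotate along $\partial\Omega(t)$ and that the cover depends on $t$. The uniform $C^1$ regularity assumption in (O2) is exactly what provides a finite atlas of coordinate charts with uniformly controlled transitions, so that the partition-of-unity estimates remain $\tau$-independent. A secondary but nontrivial point is verifying $\mathbf{t}(x)\in\overline{\Omega(t+\tau)}$ for \emph{all} $x\in\overline{\Omega(t)}$ simultaneously; this rests on combining the tube/cutoff design with the Hausdorff bound for the interior part and with prox-regularity for the boundary-adjacent part.
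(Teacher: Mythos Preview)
Your approach has a genuine gap in the cutoff step. You build $V$ on the tubular neighbourhood $\{x:\mathrm{dist}(x,\partial\Omega(t))<2L\tau\}$ and then multiply by a cutoff that vanishes outside a ``slightly larger'' tube. Since that tube has width of order $L\tau$, the cutoff varies from $1$ to $0$ over a distance of order $L\tau$, so its gradient is of order $(L\tau)^{-1}$. Combined with $|V|\sim L$ this gives $|\nabla V|\sim \tau^{-1}$, not a $\tau$-independent bound. Then $\det(I+\tau\nabla V)=1+O(1)$, and the determinant estimate \eqref{det D} fails. Your claim $\|\nabla V\|_\infty\leq C$ is therefore not justified by the construction as written.

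The remedy is to place the cutoff on a tube of \emph{fixed} width (of order $r_p$, independent of $\tau$), and this is exactly how the paper proceeds. The paper avoids charts and partitions of unity altogether: it uses the nearest-boundary-point map $p_t$ and an inward retraction $q_t$ (sending a boundary point to the point at distance $r_p/2$ along the inner normal) to define a single Lipschitz map $Q^t:\overline{\Omega(t)}\to\Omega(t)$ that equals the identity outside the fixed tube $\{\mathrm{dist}(\cdot,\partial\Omega(t))\leq r_p/2\}$. Then $\mathbf{t}(x)=(1-\tfrac{3L\tau}{r_p})x+\tfrac{3L\tau}{r_p}Q^t(x)$, i.e.\ your $V=\tfrac{3L}{r_p}(Q^t-\mathbf{i})$ with $\|V\|_\infty$ and $\|\nabla V\|_\infty$ bounded purely by the geometry. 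This gives \eqref{det D} directly, and the mapping property $\mathbf{t}(\Omega(t))\subset\Omega(t+\tau)$ follows from a short geometric argument using the $r_p$-ball around $Q^t(x)$. Once you widen your tube to a fixed scale, your argument essentially coincides with the paper's, though the paper's construction is more economical (no finite cover or partition of unity needed, which also sidesteps the compactness issue you flag when $\Omega(t)$ is unbounded).
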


\begin{proof}
By \textsc{(O2)}, $\partial\Omega(t)$ is uniformly $C^1$ for all $t$. So there exists $r_p>0$ such that for each $t\in[0,T]$, there is a unique Lipschitz map $q_t: \partial \Omega(t)\rightarrow \Omega(t)$ such that
\[
q_t(y)=z \text{ with } d(z,y)=d\left(z,\partial \Omega(t)\right)=\frac{r_p}{2}.\]
Without loss of generality, we can assume $\partial\Omega(t)^{C}$ is $r_p$-prox regular. So for any $x$ near the boundary with distance $\leq \frac{r_p}{2}$ to the boundary, there exists a unique $y\in \partial \Omega(t)$ such that $d(x,y)=d\left(x,\partial \Omega(t)\right)$. We denote such $y=p_t(x)$. So \[p_t:\Omega^r_t:=\left\{x\in\overline{\Omega(t)}, d(x,\partial\Omega)\leq \frac{r_p}{2}\right\}\rightarrow \partial \Omega(t)\] and $q_t\circ p_t=id$ on $\partial\Omega^r_t/\partial \Omega(t)$.
Now define a continuous map $Q^t$: $\overline{\Omega(t)}\rightarrow \Omega(t)$,
\begin{equation*}
Q^t(x):=
\left\{\begin{aligned}
&q_t(p_t x) &\text{ if } d\left(x,\partial \Omega(t)\right)\leq \frac{r_p}{2},\\
&x & \text{ otherwise}.
\end{aligned}\right.
\end{equation*}
Note by the assumption made on the boundary of the domain
\[|p_t(x)-p_t(y)|\lesssim |x-y|,\quad |q_t(x)-q_t(y)|\lesssim |x-y|.\]
It is not hard to see that $DQ^t$ exists almost everywhere and $|DQ^t|$ is uniformly bounded.
Define
\[\textbf{t}(x)=(1-\frac{3L\tau}{r_p})x+\frac{3L\tau}{r_p}{Q^t(x)}.\] 
Then estimate \eqref{det D} is satisfied.

We want to show that $\textbf{t}: \Omega(t)\rightarrow \Omega(t+\tau)$. Otherwise if there exists $x\in\Omega(t)$ such that $\textbf{t}(x)\ne\Omega(t+\tau)$, by \textsc{(O2)} and from the geometry for any $z\in \partial B(0,\frac{r_p}{3})$, the line segment connecting $x$ and $Q^t(x)+z$ lies in $ \Omega(t)$. Then
\[\textbf{t}(x)+\frac{3L\tau}{r_p}z=(1-\frac{3L\tau}{r_p})x+\frac{3L\tau}{r_p}({Q^t(x)}+z)\in\Omega(t).\]
In view of the fact that $z$ is arbitrary with length  $\leq\frac{r_p}{3}$, we end up with a contradiction to the Lipschitz variation of $\Omega(\cdot)$.

Finaly for \eqref{t i},
\[ \int_{\Omega(t)}|\textbf{t}(x)-x|^2 d\mu=\left(\frac{L\tau}{r_p}\right)^2\int_{\Omega(t)}|-x+Q^t(x)|^2 d\mu
\leq C{L^2\tau^2}.\]
\end{proof}

As a corollary, we have the Lemma \ref{ti} below. Notice that the hypotheses \eqref{eqnti} is weaker than \eqref{t i} which is made to allow possible weaker assumptions (than (O2)) on the domain (see Remark \ref{corner}).

\begin{lemma}\label{ti}
Assume conditions (C1)(C2) hold and fix $t\geq 0$. Suppose for all $\tau>0$ small enough, $J_{\tau,t}$ is well-defined, and there is a map $\textbf{t}:\Omega(t)\rightarrow\Omega(t+\tau)$ and universal constants $C,q<\frac{1}{2}$ such that estimate \eqref{det D} holds and
\begin{equation}\label{eqnti}
\|\textbf{t}-\textbf{i}\|_{L^2(\Omega(t),\mu)}\leq CL\tau(1+m_2(\mu)^q).
\end{equation}
Then for some universal constants $q'<1,C'$ and any $\mu\in\mathcal{P}_2^a\left(\Omega(t)\right)$, we have
\[\frac{1}{2\tau}d_W^2\left(\mu,J_{\tau,t}(\mu)\right)+\phi^1\left(J_{\tau,t}(\mu)\right)
\leq \phi^1(\mu) +C'\tau(1+ m_2(\mu)^{q'}).\]
\end{lemma}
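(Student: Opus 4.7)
The plan is to bound the minimum defining $J_{\tau,t}(\mu)$ by plugging in $\tilde{\mu} := \mathbf{t}_\#\mu$ as a competitor. Since $\mathbf{t}(\Omega(t))\subseteq \Omega(t+\tau)$, this $\tilde{\mu}$ is admissible, so
\[
\frac{1}{2\tau}d_W^2(\mu,J_{\tau,t}(\mu))+\phi^1(J_{\tau,t}(\mu)) \leq \frac{1}{2\tau}d_W^2(\mu,\tilde{\mu})+\phi^1(\tilde{\mu}).
\]
It then remains to show $\frac{1}{2\tau}d_W^2(\mu,\tilde{\mu})+\phi^1(\tilde\mu)-\phi^1(\mu)\leq C'\tau(1+m_2(\mu)^{q'})$ for some universal $q'<1$.

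For the transport cost, $d_W^2(\mu,\tilde{\mu})\leq \|\mathbf{t}-\mathbf{i}\|_{L^2(\mu)}^2\leq C^2L^2\tau^2(1+m_2(\mu)^q)^2$, so after dividing by $2\tau$ we obtain a bound of the form $C\tau(1+m_2(\mu)^{2q})$ with $2q<1$. For the entropy $\mathcal{U}^1$, I write $\mu=u\,dx$ and use the change of variables formula $\tilde{u}(\mathbf{t}(x))=u(x)/\det D\mathbf{t}(x)$ to get
\[
\mathcal{U}^1(\tilde{\mu})-\mathcal{U}^1(\mu)=-\int u(x)\log\det D\mathbf{t}(x)\,dx \leq -\log(1-CL\tau)\leq 2CL\tau
\]
for $\tau$ small, using \eqref{det D} and the fact that $\mu$ is a probability measure.

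For the potential energy, by (C1) and the mean value inequality,
\[
|V(\mathbf{t}(x))-V(x)|\leq C(1+|x|+|\mathbf{t}(x)|)|\mathbf{t}(x)-x|,
\]
and since $|\mathbf{t}(x)|\leq |x|+|\mathbf{t}(x)-x|$, Cauchy--Schwarz together with the hypothesis \eqref{eqnti} gives
\[
|\mathcal{V}(\tilde{\mu})-\mathcal{V}(\mu)|
\leq C\bigl(1+m_2(\mu)\bigr)^{1/2}\cdot CL\tau\bigl(1+m_2(\mu)^{q}\bigr)+\|\mathbf{t}-\mathbf{i}\|_{L^2(\mu)}^2,
\]
which is bounded by $CL\tau(1+m_2(\mu)^{q'})$ for any $q'\in(\tfrac{1}{2}+q,1)$. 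The interaction term $\mathcal{W}$ is handled in exactly the same way on the product measure $\mu\otimes\mu$, using (C2): writing $|W(\mathbf{t}(x)-\mathbf{t}(y))-W(x-y)|\leq C(1+|x-y|+|\mathbf{t}(x)-\mathbf{t}(y)|)(|\mathbf{t}(x)-x|+|\mathbf{t}(y)-y|)$ and applying Cauchy--Schwarz (in $\mu\otimes\mu$) reduces the estimate to the second moment of $\mu$ and $\|\mathbf{t}-\mathbf{i}\|_{L^2(\mu)}$, yielding again $CL\tau(1+m_2(\mu)^{q'})$. Summing the four contributions produces the desired inequality.

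The one delicate point is the arithmetic on exponents: the Cauchy--Schwarz step inevitably loses a factor $m_2(\mu)^{1/2}$, and the hypothesis $q<1/2$ is used precisely to keep $1/2+q<1$ so that the final exponent $q'$ can be chosen below $1$. All other ingredients — the admissibility of $\tilde{\mu}$, the Jacobian bound for entropy, and the linear growth of $\nabla V,\nabla W$ — are straightforward consequences of the assumptions.
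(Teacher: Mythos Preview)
Your proof is correct and follows essentially the same approach as the paper: use $\tilde\mu=\mathbf{t}_\#\mu$ as a competitor in the minimization defining $J_{\tau,t}(\mu)$, then bound the transport cost, entropy, potential, and interaction pieces separately via the Jacobian lower bound \eqref{det D}, conditions (C1)--(C2), and Cauchy--Schwarz, with the hypothesis $q<\tfrac12$ ensuring the final exponent $q'=\tfrac12+q$ stays below $1$. The only cosmetic difference is that the paper restricts the entropy integral to $\{\det D\mathbf{t}\leq 1\}$ before applying the Jacobian bound, whereas you bound $-\log\det D\mathbf{t}$ globally; both give the same $O(\tau)$ estimate.
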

\begin{proof}
Write $v=\textbf{t}\# \mu$ and by the assumption 
\[d_W^2(\mu,v)\leq \int_{\Omega(t)}|\textbf{t}(x)-x|^2 d\mu\leq C\tau^2(1+m_2(\mu)^q)^2.\]
Then we estimate $\phi^1(v)-\phi^1(\mu)$. By simple calculation,
\[\textbf{t}\# \mu\circ \textbf{t}=\frac{\mu}{\det (D\textbf{t})}.\] Write ${U}(\mu)=u\log u$ and then
\begin{align*}
&    \int_{\Omega(t+\tau)} {U}(v)dx-\int_{\Omega(t)} {U}(\mu)dx=\int_{\Omega(t+\tau)} \frac{{U}(\textbf{t}\# \mu)}{\textbf{t}\# u}d\textbf{t}\# \mu-\int_{\Omega(t)} \frac{{U}(\mu)}{u}d\mu\\
=&\int_{\Omega(t)} (\frac{{U}(\textbf{t}\# \mu\circ \textbf{t})}{\textbf{t}\# u\circ \textbf{t}}-\frac{{U}(\mu)}{u})d\mu=\int_{\Omega(t)} \left({U}(\frac{\mu}{\det (D\textbf{t})})\det (D\textbf{t})-{U}(\mu)\right)dx\\
\leq&\int_{\Omega(t)\cap \{\det (D\textbf{t})\leq 1\}} -u\log \det D\textbf{t} dx\leq C\tau.
\end{align*}

Next we compare $\mathcal{V}(v)$ with $\mathcal{V}(\mu)$.
From \textsc{(C1)}, $|\nabla V(x)|^2\leq C(1+|x|^2)$. Then
\begin{align*}
    &\int_{\Omega(t+\tau)} Vdv-\int_{\Omega(t)} Vd\mu=\int_{\Omega(t)} V\left(\textbf{t}(x)\right)-V(x)d\mu\\
    = &\int_{\Omega(t)}\nabla V(\theta)\cdot(\textbf{t}(x)-x)d\mu\leq C\l\int_{\Omega(t)}(1+|x|^2+|\textbf{t}(x)|^2)d\mu \r^{\frac{1}{2}}\l\int_{\Omega(t)}|\textbf{t}(x)-x|^2d\mu\r^{\frac{1}{2}}\\
    \leq & C\tau(1+m_2(\mu)^\frac{1}{2}+\tau m_2(\mu)^q)(1+m_2(\mu)^q)\leq  C\tau(1+m_2(\mu)^{q'}).
\end{align*}
Here the $\theta$ lies in the segment connecting $x,\textbf{t}(x)$ by mean-value theorem. And $q'=\frac{1}{2}+q<1$.
The last but two inequality holds because:
$|\textbf{t}(x)|2\leq 2|x|^2+2|\textbf{t}(x)-x|^2$ and $\|\textbf{t}-\textbf{i}\|_{L^2(\Omega(t),u)}$ can be bounded by $C\tau(1+m_2(\mu)^q)$ by the assumption. The last one by H\"{o}lder inequality and boundedness of $m_2(\mu)$.

Similar computation yields
\begin{align*}
    &\int_{\Omega(t+\tau)\times\Omega(t+\tau)} W(x-y)dv(y)d v(x)-\int_{\Omega(t)\times \Omega(t)} W(x-y)d\mu(y)d \mu(x)
\\
\leq &C\l\int_{\Omega(t)}(1+|x|^2+|\textbf{t}(x)|^2+|y|^2+|\textbf{t}(y)|^2)d\mu^2 \r^{\frac{1}{2}}\l\int_{\Omega(t)}|\textbf{t}(x)-x|^2+|\textbf{t}(y)-y|^2d\mu^2\r^{\frac{1}{2}}\\
\leq & C\tau(1+m_2(\mu)^{q'}).
\end{align*}
In all we proved
\[\phi^1(v)\leq \phi^1(\mu) + C\tau(1+m_2(\mu)^{q'}).\]
Then the optimality of $J_{\tau,t}(\mu)$ gives:
\[\frac{1}{2\tau}d_W^2\left(\mu,J_{\tau,t}(\mu)\right)+\phi^1\left(J_{\tau,t}(\mu)\right)\leq \frac{1}{2\tau}d_W^2(\mu,v)+\phi^1(v)\]\[
\leq \phi^1(\mu) +C\tau(1+ m_2(\mu)^{q'})+C\tau(1+m_2(\mu)^q)^2.\]
Note $2q<q'$, we finished the proof.
\end{proof}

Define \[\mu^k_\tau:=J_{\tau,(k-1)\tau}\circ ...\circ J_{\tau,0}(\mu_0)\in\mathcal{P}(\overline{\Omega(k\tau)}).\] 
We want to apply Lemma \ref{ti} on every $\mu_\tau^k$.

\begin{proposition}\label{prop 2}
Assume (C4) and under the assumption of Lemma \ref{ti}. For fixed $\mu_0\in Dom(\phi^1,0), T$, if $\tau$ is small enough and $n\tau<T$, then there exists $C>0$ independent of $\tau,k,n$ such that
\[\sum_{k=0}^{n-1} d_W^2(\mu_\tau^k,\mu_\tau^{k+1})
\leq C\tau,\quad \phi^1(\mu_\tau^n)\leq C.\]
\end{proposition}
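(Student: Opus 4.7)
My plan is to iterate the one-step estimate of Lemma~\ref{ti}. Applied with $\mu=\mu_\tau^k$ for $k=0,\dots,n-1$ and summed telescopically, it yields
\[
\frac{S_n}{2\tau}+\phi^1(\mu_\tau^n)\ \leq\ \phi^1(\mu_0)+C\tau\sum_{k=0}^{n-1}\bigl(1+m_2(\mu_\tau^k)^{q'}\bigr)\ \leq\ \phi^1(\mu_0)+CT\bigl(1+M_n^{q'}\bigr),
\]
where $S_n:=\sum_{k=0}^{n-1}d_W^2(\mu_\tau^k,\mu_\tau^{k+1})$ and $M_n:=\max_{j\leq n}m_2(\mu_\tau^j)$. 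The task is to turn this into uniform bounds on $S_n/\tau$, $M_n$, and $\phi^1(\mu_\tau^n)$ independent of $n,\tau$.

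Two auxiliary ingredients come into play. First, assumption~(C4) together with the classical Gaussian comparison $\int u\log u\,dx\geq -\tfrac{d}{2}\log(2\pi e\,m_2(\mu)/d)$ yields $\phi^1(\mu)\geq -C_0\bigl(1+\log(1+m_2(\mu))\bigr)$ for a universal~$C_0$. Second, since $\mu\mapsto m_2(\mu)^{1/2}$ is $1$-Lipschitz in $d_W$, the triangle inequality and Cauchy--Schwarz give, for every $j\leq n$,
\[
m_2(\mu_\tau^j)^{1/2}\ \leq\ m_2(\mu_0)^{1/2}+\sum_{k=0}^{j-1}d_W(\mu_\tau^k,\mu_\tau^{k+1})\ \leq\ m_2(\mu_0)^{1/2}+\sqrt{jS_n},
\]
so that $M_n\leq 2m_2(\mu_0)+2(T/\tau)S_n$.

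The main obstacle is a circular dependence: the summed estimate bounds $S_n$ in terms of $M_n$, while the Lipschitz estimate bounds $M_n$ in terms of $S_n/\tau$. The loop closes precisely because $q'<1$: Young's inequality gives $M_n^{q'}+\log(1+M_n)\leq \varepsilon M_n+C_\varepsilon$ for any $\varepsilon>0$. Inserting this and the $\phi^1$ lower bound into the summed estimate, and then substituting $M_n\leq 2m_2(\mu_0)+2(T/\tau)S_n$, produces an inequality of the form
\[
S_n\ \leq\ 2\tau\,C_1(\mu_0,T,\varepsilon)+4(CT+C_0)\varepsilon T\,S_n.
\]
Choosing $\varepsilon$ small enough (depending on $T$, $C$, $C_0$, but not on $\tau$ or $n$) so that $4(CT+C_0)\varepsilon T<\tfrac{1}{2}$ absorbs the $S_n$ on the right, yielding $S_n\leq C(\mu_0,T)\tau$. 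Plugging this back gives $M_n\leq C(\mu_0,T)$, and inserting these bounds into the original summed estimate finally produces $\phi^1(\mu_\tau^n)\leq C(\mu_0,T)$, establishing both conclusions of the proposition.
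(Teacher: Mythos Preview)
Your proposal is correct and follows essentially the same route as the paper: iterate Lemma~\ref{ti} telescopically, invoke the entropy lower bound in terms of the second moment (the ``trick from~\cite{JKO}'' alluded to in the paper) together with the boundedness below of $V,W$ from (C4), and then exploit $q'<1$ to close the circular dependence between the Wasserstein increments and the second moments. The only cosmetic difference is that the paper organizes the closing step as a discrete recursive inequality $m_2(\mu_\tau^n)\leq C+C\tau\sum_{k<n}m_2(\mu_\tau^k)^{q'}$, whereas you package it as a single absorption via Young's inequality on $M_n$ and $S_n$; both arguments are equivalent.
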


\begin{proof}
By Lemma \ref{ti},
\begin{equation*}
\frac{1}{2\tau}d_W^2(\mu^{k}_\tau,\mu^{k+1}_\tau)+\phi^1(\mu^{k+1}_\tau)\leq \phi^1(\mu^{k}_\tau) +C\tau(1+ m_2(\mu^{k}_\tau)^{q'}). %\label{ineq energy}
\end{equation*}
By iteration
\begin{equation}
\frac{1}{2\tau}\sum_{k=0}^{n-1} d^2_W(\mu_\tau^k,\mu_\tau^{k+1})+\phi^1(\mu^{n}_\tau)\leq
\phi^1(\mu_0) +C\tau n+C\tau m_2(u_\tau^0)^{q'}+...
+C\tau m_2(\mu^{n-1}_\tau)^{q'}. \label{tau energy}
\end{equation}
Note $d_W(\mu_\tau^0,\mu_\tau^n)\leq \sum_{k=0}^{n-1} d_W(\mu_\tau^k,\mu_\tau^{k+1})$, we obtain
\begin{equation} \label{ineq dist}
\begin{aligned}
\frac{1}{2n\tau}d_W^2(\mu_\tau^0,\mu_\tau^n)&\leq \frac{1}{2\tau}\sum_{k=0}^{n-1} d_W^2(\mu_\tau^k,\mu_\tau^{k+1})\\
&\leq \phi^1(\mu_0)-\phi^1(\mu_\tau^n)+C\tau \sum_{k=1}^{n-1} m_2(\mu_\tau^k)^{q'}+C.
\end{aligned}
\end{equation}
To give a bound to $m_2(\mu)$, we use the trick as in proposition 4.1 \cite{JKO}. 
Since 
\[
m_2(\mu_\tau^n)\leq 2m_2(\mu_\tau^0)+2d_W^2(\mu_\tau^0,\mu_\tau^n),
\]
and by \eqref{ineq dist} and the lower bound of $V,W$, we see
\[
m_2(\mu_\tau^n)
\leq C+C\tau \sum_{k=0}^{n-1} m_2(\mu_\tau^k)^{q'}.\]
Here ${q'}<1$. Considering that $C$ is independent of $\tau,n$ (which only depends on $T, \mu_0, \Omega_T$) and $n$ can be any positive integer such that $n\tau <T$, the above shows $m_2(\mu_\tau^n)$ is uniformly bounded.

According to \eqref{ineq dist}
\[\sum_{k=0}^{n-1} d_W^2(\mu_\tau^k,\mu_\tau^{k+1})
\leq C\tau.\]
And in view of \eqref{tau energy}, $\{\phi^1(\mu_\tau^k)\}$ are uniformly bounded.
\end{proof}

\subsection{Convergence of Discrete Solutions}

We define a discrete type solution with time step $\tau$ as
\begin{equation}\mu_\tau(t):=J_{\tau,(k-1)\tau}\circ ...\circ J_{\tau,0} (\mu_0)=\mu^k_\tau \quad\text{if  }t\in((k-1)\tau,k\tau].\label{jkoscheme}
\end{equation}
As proved in \cite{JKO},
\[\left\{\mu\in\mathcal{P}_2( \mathbb{R}^d ):\phi^1(\mu)\leq C, m_2(\mu)\leq C\text{ for some }t\leq T\right\}\] is compact in $\mathcal{P}^a_2( \mathbb{R}^d )$. 

Then according to Proposition \ref{prop 2}, $\{\mu_\tau(t), t\leq T, \tau>0\}$ is a compact subset in $\mathcal{P}_2^a( \mathbb{R}^d )$. We connect every pair of consecutive discrete values $(\mu^{k-1}_\tau, \mu^{k}_\tau)$ with a constant speed geodesic parametrized in each interval $[(k-1)\tau,k\tau]$ by
\[\hat{\mu}_\tau\left((k-1)\tau+s\right):=\left((1-s)\textbf{i}+s\textbf{t}\right)_\# \mu^{k-1}_\tau, s\in[0,1].\]
Here $\textbf{t}$ is an optimal transport map from $\mu^{k-1}_\tau$ to $\mu^{k}_\tau$. Again by Proposition \ref{prop 2}, $\{\hat{\mu}_\tau, \tau>0\}$ are H\"{o}lder continuous curves. Ascoli-Arzela Theorem yields the relative compactness of $\hat{\mu}_\tau$ in 
$C^0\left([0,T];\mathcal{P}_2( \mathbb{R}^d )\right).$
Then there is a subsequence of $\hat{\mu}_\tau(\cdot)$ that converge to some $\mu(\cdot)$ in Wasserstein metric uniformly pointwise. Obviously $\mu(t)$ is supported in $\Omega(t)$ and $\mu_\tau(\cdot)\rightarrow \mu(\cdot)$ along the subsequence $\tau\rightarrow 0$. We write $u(t)$ as $\mu(t)$'s density function.

From a by-now standard computation presented in \cite{JKO,la}, the Euler-Lagrange equation associated with \eqref{JKOscheme} is as follows: for any $\psi$, a smooth vector field compactly supported in $\Omega(k\tau)$,
\begin{equation}\label{EL}
\begin{aligned}
&\quad\quad\int_{\Omega((k-1)\tau)\times\Omega(k\tau)}(y-x)\cdot \psi(y)d\gamma^k_\tau+\\
&\tau\int_{\Omega(k\tau)}(-\nabla \cdot \psi+\nabla V\cdot \psi +\nabla W*\mu_\tau^k \cdot \psi )(y)d\mu_\tau^k(y)=0.
\end{aligned}
\end{equation}
Here $\gamma^k_\tau=\gamma^k_\tau(x,y)$ is an optimal transport plan between $\mu^{k-1}_\tau(x)$ and $\mu^{k}_\tau(y)$. 

Suppose $\psi$ is a smooth compactly supported vector field. We mainly apply H\"{o}lder's inequality and conditions (C1)(C2) to get the following estimates.
\begin{align*}
&\left|\int_{\Omega((k-1)\tau)\times\Omega(k\tau)}(y-x)\cdot \psi(y)d\gamma^k_\tau\right|\leq \left(\int_{\Omega((k-1)\tau)\times\Omega(k\tau)}|y-x|^2 d\gamma^k_\tau\right)^\frac{1}{2}\left(\int_{\Omega(k\tau)} |\psi|^2d\mu_\tau^k\right)^\frac{1}{2}\\
&\quad\leq \|\psi\|_{L^2(\mathbb{R}^d,d\mu_\tau^k)}d_W(\mu_\tau^{k-1},\mu_\tau^k),\\
&\left|\int_{\Omega(k\tau)} \nabla V\cdot \psi\, u_\tau^k dy\right|\leq \|\psi\|_{L^2(\mathbb{R}^d,d\mu_\tau^k)}\left(\int_{\Omega(k\tau)\cap B^c_R} |\nabla V|^2 d\mu_\tau^k\right)^\frac{1}{2}\leq C\|\psi\|_{L^2(\mathbb{R}^d,d\mu_\tau^k)}(m_2^\frac{1}{2}\l\mu_\tau^k)+1\r ,\\
&\left|\int_{\Omega(k\tau)} \nabla W*\mu^k_\tau\cdot \psi\, u_\tau^k dy \right|\leq C\|\psi\|_{L^2(\mathbb{R}^d,d\mu_\tau^k)}\left(\int_{\Omega^2(k\tau)\cap\{|y|\geq R\}} 1+|y-z|^2d\mu_\tau^k(z) d\mu_\tau^k(y)\right)^\frac{1}{2}\\
%&\quad\leq C\|\psi\|_{L^2(\mathbb{R}^d,d\mu_\tau^k)}\left(\int_{\mathbb{R}^d\cap\{x\geq R\}} 1+|x|^2+m_2(\mu_\tau^k) d\mu_\tau^k(x)\right)^\frac{1}{2}\\
&\quad\leq C\|\psi\|_{L^2(\mathbb{R}^d,d\mu_\tau^k)}(m_2^\frac{1}{2}\l\mu_\tau^k)+1\r.
\end{align*}
By the uniform bound of $m_2(\mu^k_\tau)$, we find
\begin{equation}
    \label{utaok L2}
    \left|\int_{\Omega(k\tau)}u_\tau^k\nabla\cdot \psi dx\right|\leq \frac{1}{\tau}d_W(\mu_\tau^k,\mu_\tau^{k-1})\|\psi\|_{L^2(\mathbb{R}^d,d\mu_\tau^k)}+C\|\psi\|_{L^2(\mathbb{R}^d,d\mu_\tau^k)}.
\end{equation}

Now we assume that $\psi$ is $0$ outside of $B_R$, which is an empty set if $R=0$. Denote 
\[\delta^k_{\tau,R}:=\int_{B^c_R}d\mu_\tau^k.\]
By the uniform boundedness of the second moment, $\delta^k_{\tau,R}\to 0$ as $R\to\infty$
uniformly in $\tau,k$ with $k\tau \leq T$. Then
\begin{align*}
&\left|\int_{\Omega((k-1)\tau)\times\Omega(k\tau)}(y-x)\cdot \psi(y)d\gamma^k_\tau\right|\leq \|\psi\|_{L^\infty}d_W(\mu_\tau^{k-1},\mu_\tau^k)\left| \delta^k_{\tau,R}\right|^\frac{1}{2},\\
&\left|\int_{\Omega(k\tau)} \nabla V\cdot \psi u_\tau^k dy\right|\leq \|\psi\|_{L^\infty}\int_{\Omega(k\tau)\cap B^c_R} |\nabla V| d\mu_\tau^k\leq C\|\psi\|_{L^\infty}(m_2^\frac{1}{2}\l\mu_\tau^k)+1\r \left| \delta^k_{\tau,R}\right|^\frac{1}{2},\\
&\left|\int_{\Omega(k\tau)} \nabla W*\mu^k_\tau\cdot \psi u_\tau^k dy\right|\leq C\|\psi\|_{L^\infty}\int_{\Omega^2(k\tau)\cap\{|y|\geq R\}} 1+|y-z|\,d\mu_\tau^k(z) d\mu_\tau^k(y)\\
&\quad\leq C\|\psi\|_{L^\infty}
\left(\int_{\{|y|\geq R\}} 1+|y|^2+|z|^2d\mu_\tau^k(z) d\mu_\tau^k(y)\right)^\frac{1}{2}
\left(\int_{\{|y|\geq R\}} d\mu_\tau^k(z) d\mu_\tau^k(y)\right)^\frac{1}{2}\\
&\quad\leq C\|\psi\|_{L^\infty}(m_2^\frac{1}{2}\l\mu_\tau^k)+1\r \left| \delta^k_{\tau,R}\right|^\frac{1}{2}.
\end{align*}
We find
\begin{equation}
    \label{utaok tightness}
    \left|\int_{\Omega(k\tau)}u_\tau^k\nabla\cdot \psi dy\right|\leq \frac{1}{\tau}d_W(\mu_\tau^k,\mu_\tau^{k-1})\|\psi\|_{L^\infty}\left| \delta^k_{\tau,R}\right|^\frac{1}{2}+C\|\psi\|_{L^\infty}\left| \delta^k_{\tau,R}\right|^\frac{1}{2}.
\end{equation}
Then we have
\[\|\nabla u_\tau^k\|_{L^{\infty,*}(\Omega(k\tau),dx)} \leq \frac{C}{\tau}d_W(\mu_\tau^{k-1},\mu_\tau^k)+C\]
which shows that $\nabla u_\tau$ exists in the dual space of $L^{\infty}(\mathbb{R}^d)$ denoted as $L^{\infty,*}(\mathbb{R}^d)$.
Actually applying Proposition \ref{prop 2} gives, $\nabla u_\tau\chi_{\{u_\tau>0\}}$ is uniformly bounded in $L^{\infty,*}(\mathbb{R}^d\times [0,T])$ for all $\tau>0$.
Also 
tightness is guaranteed by \eqref{utaok tightness}. So as $\tau\to 0$ along a subsequence
$\nabla u_\tau\to \nabla u $
weakly in $L^{\infty,*}(\Omega_T)$.

Now we take $\tau\to 0$ in \eqref{utaok L2} to find that 
\[\nabla u/ u\in L^2(\mathbb{R}^d\times [0,T], u\,dx dt).\]
Thus $\nabla u\,\chi_{\{u>0\}} \in L^1(\mathbb{R}^d\times [0,T])$ and
\begin{equation}
    \label{nablau converge}
    \nabla u_\tau\to \nabla u \quad\text{ in }L^1(\Omega_T)
\end{equation}

\medskip

By \eqref{EL} and approximations, for $\psi\in C^\infty_0(\mathbb{R}^d;\mathbb{R}^d)$ 
\begin{equation}\label{EL'}
\begin{aligned}
&\quad\quad\int_{\Omega((k-1)\tau)\times\Omega(k\tau)}(y-x)\cdot \psi(y)d\gamma^k_\tau+\\
&\tau\int_{\Omega(k\tau)}\left(\nabla V\cdot \psi +\nabla W*\mu_\tau^k \cdot \psi \right)d\mu_\tau^k+\tau\int_{\Omega(k\tau)}\psi\cdot\nabla u_\tau^k\,dy=0.
\end{aligned}
\end{equation}
Write $\textbf{t}^k_\tau$ as an optimal transport map from $\mu^{k}_\tau$ to $\mu^{k-1}_\tau$.
Then
\begin{equation}\label{psiktau}
\int_{\Omega((k-1)\tau)\times\Omega(k\tau)}(y-x)\cdot \psi(y)d\gamma^k_\tau=\int_{\Omega(k\tau)}(\textbf{t}^k_\tau-\textbf{i})\psi\left(\textbf{t}^k_\tau(x)\right) d\mu^k_\tau.
\end{equation}
For every test function $\varphi\in C_c^\infty\left(\mathbb{R}^d\times (0,T)\right)$
\begin{equation}\label{phiktau}
\left|\int_{\Omega(k\tau)}\varphi\left(\textbf{t}^k_\tau(x)\right)-\varphi(x)
-\langle\nabla\varphi(\textbf{t}^k_\tau(x)),\textbf{t}^k_\tau-\textbf{i}\rangle d\mu^k_\tau\right|\leq \frac{1}{2}\|\nabla^2 \varphi\|_{L^\infty}d_W^2(\mu^{k-1}_\tau,\mu^k_\tau).
\end{equation}

Take $\psi=\nabla \varphi$. By \eqref{nablau converge} \eqref{EL'} \eqref{psiktau} \eqref{phiktau} and  Proposition \ref{prop 2}, we get
\begin{align*}
    \int_{\Omega_T}\partial_t \varphi d\mu &=\lim_{\tau\rightarrow 0}\int_{\Omega_T}\partial_t \varphi d\mu_\tau=\lim_{\tau\rightarrow 0}\frac{1}{\tau}\sum_k\int_{\Omega(k\tau)}\varphi(x)-\varphi\left(\textbf{t}^k_\tau(x)\right)d\mu_\tau^k
\\
&=\lim_{\tau\rightarrow 0}\sum_k\int_{\Omega(k\tau)}(\nabla V\cdot \nabla \varphi +\nabla W*\mu_\tau^k \cdot \nabla \varphi )d\mu_\tau^k
+\int_{\Omega(k\tau)}\nabla \varphi\cdot \nabla u_\tau^k \,dxdt
\\
&=\int_{\Omega_T}(\nabla V\cdot \nabla \varphi +\nabla W*\mu \cdot \nabla \varphi )d\mu dt+\int_{\Omega_T}\nabla\varphi \cdot\nabla u\,dxdt.
\end{align*}
Till now we proved that $\mu$ is a weak solution to equation \eqref{eqn2'}.
We conclude with the following theorem.
\begin{theorem}\label{existence}
Suppose \textsc{(C1)-(C4)(O2)} hold. Then for $\mu_0\in Dom(\phi^1,0)$, there exists an absolutely continuous curve $\mu(\cdot)$ in $\mathcal{P}_a^2(\mathbb{R}^d)$ which solves equation \eqref{eqn2'} weakly in $\Omega_T$. 
\end{theorem}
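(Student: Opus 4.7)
The plan is to pass to the limit in the JKO-type scheme \eqref{JKOscheme} using the ingredients already assembled in the excerpt. Starting from $\mu_0 \in Dom(\phi^1,0)$, I would iterate the operator $J_{\tau,(k-1)\tau}$ to produce the discrete family $\mu^k_\tau$, noting that existence of each minimizer follows from the coercivity, lower semicontinuity, and compactness of $\phi^1$ together with the internal-energy term forcing absolute continuity. Proposition \ref{v vs u} supplies a Lipschitz retraction $\textbf{t}\colon \Omega(t)\to \Omega(t+\tau)$ satisfying the hypotheses \eqref{eqnti}--\eqref{det D} of Lemma \ref{ti}, which feeds into the one-step dissipation
\[
\tfrac{1}{2\tau}d_W^2(\mu^{k-1}_\tau,\mu^k_\tau)+\phi^1(\mu^k_\tau)\leq \phi^1(\mu^{k-1}_\tau)+C\tau\bigl(1+m_2(\mu^{k-1}_\tau)^{q'}\bigr).
\]
Iterating this and invoking Proposition \ref{prop 2} together with (C4) (lower bound of $V,W$) yields the uniform bounds $\sup_k m_2(\mu^k_\tau)\leq C$, $\sup_k \phi^1(\mu^k_\tau)\leq C$, and $\sum_k d_W^2(\mu^{k-1}_\tau,\mu^k_\tau)\leq C\tau$.

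Next I would pass through the piecewise geodesic interpolant $\hat{\mu}_\tau$ described in the excerpt, which is $\tfrac{1}{2}$-H\"older in time uniformly in $\tau$. Since sublevel sets of $\phi^1$ with bounded second moment are compact in $\mathcal{P}_2^a(\mathbb{R}^d)$ (as in \cite{JKO}), Ascoli--Arzela produces a subsequence $\hat{\mu}_\tau \to \mu$ in $C^0([0,T];\mathcal{P}_2(\mathbb{R}^d))$ with $\mu(t)\in \mathcal{P}_2^a(\overline{\Omega(t)})$, and the same subsequence drives $\mu_\tau \to \mu$ pointwise in time. Absolute continuity of $\mu(\cdot)$ in $d_W$ transfers from the interpolant via the uniform H\"older bound.

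To obtain a weak solution, I would pass to the limit in the discrete Euler--Lagrange equation \eqref{EL}. The bound \eqref{utaok L2} shows $\nabla u_\tau$ is uniformly controlled in $L^{\infty,\ast}(\Omega_T)$, while \eqref{utaok tightness} controls tightness. Coupled with the finite Fisher information $\nabla u/u \in L^2(u\,dx\,dt)$ extracted as $\tau\to 0$ from \eqref{utaok L2}, these give $\nabla u_\tau \to \nabla u$ in $L^1(\Omega_T)$. Testing \eqref{EL'} with $\psi=\nabla\varphi$ for $\varphi\in C_c^\infty(\mathbb{R}^d\times(0,T))$, combining the Taylor expansion \eqref{phiktau} (whose quadratic remainder is absorbed by the bound $\sum_k d_W^2(\mu^{k-1}_\tau,\mu^k_\tau)\leq C\tau$) with the $d_W$-convergence of $\mu_\tau$ and the $L^1$-convergence of $\nabla u_\tau$, the chain of equalities displayed immediately before the theorem goes through and yields the weak formulation of \eqref{eqn2'}.

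The principal technical obstacle is that the support $\overline{\Omega(t)}$ varies with $t$, so the standard JKO comparison using the identity push-forward is unavailable between consecutive steps: one must exhibit a competitor in $\mathcal{P}_2(\Omega(t+\tau))$ with only $O(\tau)$ entropy cost despite the moving boundary. Proposition \ref{v vs u} resolves exactly this issue by using the $r_p$-prox regularity from (O2) to build a Lipschitz retraction with near-unit Jacobian, and this is the key novelty beyond the stationary analysis of \cite{prox,wuli}. Once that competitor is in hand, everything else is the standard machinery of minimizing movements, adapted to track the moving boundary through the uniform estimates above.
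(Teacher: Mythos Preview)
Your proposal is correct and follows essentially the same approach as the paper: the proof of Theorem~\ref{existence} is precisely the limiting procedure you describe, assembled from the JKO scheme~\eqref{JKOscheme}, the retraction of Proposition~\ref{v vs u} feeding Lemma~\ref{ti} and Proposition~\ref{prop 2}, the Ascoli--Arzel\`a compactness of the geodesic interpolant $\hat{\mu}_\tau$, and the passage to the limit in the Euler--Lagrange equation~\eqref{EL'} via \eqref{utaok L2}--\eqref{phiktau}. You have also correctly identified the main technical novelty, namely that the moving support forces the construction of the near-identity competitor in Proposition~\ref{v vs u} in place of the trivial identity comparison available in the stationary case.
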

\begin{proof}
From the above discussion, along a subsequence of $\tau\rightarrow 0$, $\mu^\tau(\cdot)$ converges narrowly to $\mu(\cdot)$ uniformly for all $t\in[0,T]$. The limit $\mu(\cdot)$ is an absolutely continuous curve in $\mathcal{P}_a^2(\mathbb{R}^d)$ and it is a weak solution to equation \eqref{eqn2'}. 
\end{proof}

\begin{remark}
In stationary domain, the rate at which solutions to the discrete gradient flow converge to solutions of the gradient flow was studied in \cite{gflow, katy}.  But we cannot prove the exponential formula in the case that the domain is time-dependent. This can be an interesting direction for future research.
\end{remark}

\begin{remark}\label{corner}
Here we required \textsc{(O2)} on the domain. However uniform $C^1$ regularity is only used in Proposition \ref{v vs u} and so that we can apply Lemma \ref{ti}. But according to the assumptions made in the lemma, \textsc{(O2)} is more than what is needed. For example, bound \eqref{eqnti} can still be achieved if there is a wedge on the boundary. It is technical to construct the map $\textbf{t}$ which depends on the geometry of the time-dependent domain.
\end{remark}

\subsection{Uniqueness Result}

We state two uniqueness results of equation \eqref{eqn2'}. In the first one, we study the $L^2$ stability of solutions in a stationary domain. %We assume the domain to be bounded and stationary, but it may not be convex. 
The proof is postponed to the appendix. The second one is stated in the remark below where we require the space-time domain to be convex.

\begin{theorem}\label{uniqueness}
Suppose the domain $\Omega\subset\mathbb{R}^{d}$ is stationary and bounded with \textsc{(C1)-(C3)(O2)} hold. Suppose $\mu_0\in\mathcal{P}_2^a(\Omega)$ and its density $u_0\in L^2(\Omega)$. Then there exists a unique weak solution $\mu(\cdot)$ to equation \eqref{eqn2'} with density $u(t)\in L^2(\Omega)$ for each $t$.

If $\mu^i(t) (i=1,2)$ are two solutions with initial data $\mu_0^i\in\mathcal{P}_2^a(\Omega)$ with their densities $u_0^i\in L^2(\Omega)$, then there is $C$ depending on the domain and universal constants such that for a.e. $t\in[0,T]$
\begin{equation*}
\|u^1(t)-u^2(t)\|_{L^2(\Omega)}\leq C\|u^1_0-u^2_0\|_{L^2(\Omega)}.
\end{equation*}
\end{theorem}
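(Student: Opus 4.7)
The plan is to prove uniqueness and $L^2$ stability simultaneously by deriving an $L^2$ energy estimate for the difference $v := u^1 - u^2$ of two solutions and then applying Grönwall's inequality. Since $\Omega$ is stationary and bounded, the boundary speed $c$ vanishes, so the co-normal boundary condition in \eqref{eqn2'} becomes $(\nabla u^i + u^i \nabla V + u^i (\nabla W * \mu^i)) \cdot n = 0$ on $\partial\Omega$. Subtracting the equations for $u^1$ and $u^2$, I obtain
\[
\partial_t v = \Delta v + \nabla\cdot(v \nabla V) + \nabla\cdot\bigl(v\,(\nabla W * \mu^1)\bigr) + \nabla\cdot\bigl(u^2\,(\nabla W * v)\bigr),
\]
where I have split $u^1(\nabla W * \mu^1) - u^2(\nabla W*\mu^2) = v(\nabla W*\mu^1) + u^2(\nabla W*v)$.

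The main step is to multiply by $v$, integrate over $\Omega$, and use the boundary condition to cancel the boundary contributions. Writing out the co-normal condition for both $u^1$ and $u^2$ and subtracting yields
\[
\partial_n v + v\,\partial_n V + v\,(\nabla W*\mu^1)\cdot n + u^2\,(\nabla W*v)\cdot n = 0 \quad\text{on } \partial\Omega,
\]
and this is exactly the combination that arises as the boundary integrand from integration by parts on all four terms on the right. Thus the boundary integrals cancel identically, leaving
\[
\tfrac{1}{2}\tfrac{d}{dt}\|v\|_{L^2}^2 = -\|\nabla v\|_{L^2}^2 - \int_\Omega v\,\nabla V\cdot\nabla v - \int_\Omega v\,(\nabla W*\mu^1)\cdot\nabla v - \int_\Omega u^2\,(\nabla W*v)\cdot\nabla v.
\]
Since $\Omega$ is bounded and $V, W \in C^2$, $\|\nabla V\|_{L^\infty(\Omega)}$ and $\|\nabla W * \mu^1\|_{L^\infty(\Omega)}$ are bounded by universal constants. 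For the last term, a Young-type convolution bound gives $\|\nabla W * v\|_{L^\infty} \leq C\|v\|_{L^1} \leq C|\Omega|^{1/2}\|v\|_{L^2}$. Applying Cauchy--Schwarz and Young's inequality $ab \leq \tfrac{1}{4}\|\nabla v\|^2 + C b^2$ to each cross term to absorb the $\|\nabla v\|^2$ into the dissipation, I arrive at
\[
\tfrac{d}{dt}\|v\|_{L^2}^2 \leq C\bigl(1 + \|u^2(t)\|_{L^2}^2\bigr)\|v\|_{L^2}^2.
\]
A Grönwall argument then yields the stated $L^2$ stability, provided $\|u^2(t)\|_{L^2}$ stays bounded on $[0,T]$.

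The main obstacle, and the final step, is to establish a time-uniform $L^2$ bound for any solution with $L^2$ initial data, which also serves to justify the computation above rigorously. I would prove this by the same multiplier technique: multiplying the equation for $u^i$ by $u^i$, integrating, and using the co-normal boundary condition, the boundary term collapses to $0$ and the drift and interaction terms are controlled by Young's inequality exactly as above, giving $\tfrac{d}{dt}\|u^i\|_{L^2}^2 \leq C\|u^i\|_{L^2}^2$ and hence $\|u^i(t)\|_{L^2} \leq \|u_0^i\|_{L^2} e^{Ct}$. To make all these manipulations rigorous for the weak solutions produced by the JKO scheme (which a priori only satisfy $\nabla u \in L^1$), I would approximate by smoothing the initial data, propagate the $L^2$ and higher regularity along smooth solutions of a regularized problem, and pass to the limit using the uniform $L^2$ bound together with lower semicontinuity; uniqueness within the class $L^\infty([0,T]; L^2(\Omega))$ then follows immediately from the stability estimate applied with $\mu_0^1 = \mu_0^2$. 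The technical delicacy will be in handling the nonlocal convolution boundary term in this approximation, since truncation of the domain or mollification must be compatible with the co-normal condition; this is where the boundedness of $\Omega$ plays a crucial role, since it allows one to treat $\nabla W * \mu$ as a globally Lipschitz, bounded vector field.
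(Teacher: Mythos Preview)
Your formal energy computation is correct and matches the paper's: the differential inequality
\[
\tfrac{d}{dt}\|v\|_{L^2}^2 \leq C\bigl(1+\|u^2(t)\|_{L^2}^2\bigr)\|v\|_{L^2}^2
\]
together with a preliminary $L^2$ bound on each solution is exactly what drives the argument. The difference lies in how the computation is made rigorous.

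You propose to justify the multiplier argument by smoothing the \emph{initial data}, working with smooth solutions of a regularized problem, and passing to the limit. This can be made to work, but it introduces an extra layer: you must construct the regularized problem, verify its smooth solutions exist, and show they converge to the given weak solution---which borders on re-proving existence. The paper instead mollifies the \emph{weak solution itself} in space: setting $u^\iota=\eta_\iota*u$ and plugging $\eta_\iota(x-\cdot)\varphi(t)$ into the weak formulation (test functions live on all of $\mathbb{R}^d$, so the co-normal condition is already encoded) yields a genuine ODE for $u^\iota(x,t)$. One then multiplies by $u^\iota$, integrates in $x$, and handles the commutators $(\nabla V\,u)*\eta_\iota-\nabla V\,u^\iota$, $((\nabla W*\mu)u)*\eta_\iota-(\nabla W*\mu)u^\iota$ as $O(\iota)u^\iota$ errors. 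No boundary integral ever appears explicitly, so your careful cancellation of boundary terms---while correct at the formal level---is bypassed entirely. After Gr\"onwall on $\|u^\iota\|_{L^2}^2$ one sends $\iota\to 0$. The same mollification is applied to $\rho=u^1-u^2$ for the stability step. This route is more self-contained: it uses only the weak formulation you already have, and avoids the potential circularity of needing convergence of approximants to the (not-yet-unique) weak solution.
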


\begin{remark}\label{remarkunique}
Assuming $\Omega(t)$ is uniformly bounded and convex for all $t\leq T$, then if $\mu^i(t) (i=1,2)$ are two solutions with initial data $\mu_0^i\in\mathcal{P}_2^a(\Omega(0))$, there is a universal $C(T)$ such that
\begin{equation*}
d_W(u^1(t),u^2(t))\leq Cd_W(u^1_0,u^2_0) \quad \text{ for }t\in [0,T].
\end{equation*}
This claim can be proved in a similar way as done in Theorem 11.1.4 \cite{gflow} or section 2 of \cite{unique}, as long as the domain is convex at any fixed time.

\end{remark}

\subsection{Convergence to the First Order Equation}

We consider equations \eqref{eqn1} and \eqref{eqn2} in bounded, convex domain in this section. Let $\mu^\epsilon$ be the weak solution to \eqref{eqn2} and $\mu$ be the weak solution to \eqref{eqn1}.
%\begin{equation}\label{eqn2}\left\{\begin{aligned}{ccl}\mu^\epsilon_t-\nabla\cdot\left(\epsilon \nabla \mu^\epsilon+\nabla V \mu^\epsilon+\nabla (W*\mu^\epsilon) \mu^\epsilon\right)=0 &\text{ in }& \Omega_T,\\-(\epsilon \nabla (\mu^\epsilon)+\nabla V \mu^\epsilon+(\nabla W*\mu^\epsilon)\mu^\epsilon)\cdot n=c \mu^\epsilon &\text{ on }& \overline{\Omega}_T\backslash\Omega_T\\\mu^\epsilon(x,0)=\mu_0(x) &\text{ in }& \Omega(0).\end{aligned}\right.\end{equation}
We want to show that $\mu^\epsilon$ converges to $\mu$ in Wasserstein metric as $\epsilon\rightarrow 0$.
%\[\frac{\partial}{\partial t}\mu+\nabla\cdot \left( P_{x,t} \left(-\nabla V \mu -(\nabla W *\mu)\right)\mu\right) = 0. \]

Recall \eqref{phid} and write $\phi(\mu), \phi^\epsilon(\mu^\epsilon)$ as the energies.
\textit{The internal energy} is denoted as
\[\mathcal{U}^\epsilon(\mu)=\epsilon\int_{\mathbb{R}^d}u\log u dx \text{ where }\mu=u \mathcal{L}^d.\]
\textit{The metric slope of functional $\phi^\epsilon$} for $\mu\in Dom(\phi^\epsilon,t)$ at time $t$ is 
\[|\partial\phi^\epsilon(t)|(\mu):=\limsup_{w\rightarrow \mu, w\in \mathcal{P}_2^a\left(\Omega(t)\right)}\frac{\left(\phi^\epsilon(\mu)-\phi^\epsilon(w)\right)^+}{d_W(w,\mu)}.\]

Now we give two lemmas. The proof of the first lemma is standard (see Proposition 10.4.13 \cite{gflow}), but we still need to be careful since the domain is time-dependent. We postpone the proof in the appendix.

\begin{lemma}\label{slope}
Suppose \textsc{(C1)-(C4)} hold, the domain is bounded and satisfies conditions \textsc{(O2)}, and $\mu_0\in Dom(\phi^1,0)$. Let $v^\epsilon(x,t)=\l\epsilon\frac{\nabla  u^\epsilon}{u^\epsilon}+\nabla V+\nabla W*\mu^\epsilon \r(x,t)$. Then for any $0<\epsilon<1$, \[\int_{\Omega_T}\left|v^\epsilon(x,t)\right|^2  u^\epsilon(x,t) dxdt \leq C .\]
\end{lemma}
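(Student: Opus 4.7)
The plan is to derive the estimate at the discrete JKO level and pass to the limit $\tau\to 0$. I would run the scheme of Theorem \ref{existence} with $\phi^\epsilon$ in place of $\phi^1$. Since $\mu_0\in Dom(\phi^1,0)$ is supported in the bounded set $\Omega(0)$, the internal energy $\mathcal{U}^\epsilon(\mu_0)=\epsilon\int u_0\log u_0\,dx$ is uniformly bounded in $\epsilon\in(0,1)$; together with $V,W$ bounded below from \textsc{(C4)}, this yields $\phi^\epsilon(\mu_0)\le C$ uniformly in $\epsilon$. The comparison map from Proposition \ref{v vs u} is purely geometric and $\epsilon$-independent, so Proposition \ref{prop 2} goes through verbatim for $\phi^\epsilon$, producing
\[\sum_{k=0}^{n-1}\frac{d_W^2(\mu_\tau^k,\mu_\tau^{k+1})}{\tau}\le C\]
uniformly in $\tau$ and $\epsilon\in(0,1)$.

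Next, I would use the $\epsilon$-scaled analogue of the Euler-Lagrange identity \eqref{EL'}: for every smooth compactly supported vector field $\psi$ on $\Omega(k\tau)$,
\[\int_{\Omega((k-1)\tau)\times\Omega(k\tau)}(y-x)\cdot\psi(y)\,d\gamma^k_\tau+\tau\int_{\Omega(k\tau)}v^{\epsilon,k}_\tau\cdot\psi\,d\mu^k_\tau=0,\]
where $v^{\epsilon,k}_\tau:=\epsilon\nabla u^k_\tau/u^k_\tau+\nabla V+\nabla W*\mu^k_\tau$, after rewriting $\epsilon\int\nabla u^k_\tau\cdot\psi\,dx$ as $\int\epsilon(\nabla u^k_\tau/u^k_\tau)\cdot\psi\,d\mu^k_\tau$ on $\{u^k_\tau>0\}$. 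Applying Cauchy-Schwarz in the first integral (using that $\mu^k_\tau$ is the second marginal of $\gamma^k_\tau$) gives
\[\left|\int v^{\epsilon,k}_\tau\cdot\psi\,d\mu^k_\tau\right|\le \frac{d_W(\mu^{k-1}_\tau,\mu^k_\tau)}{\tau}\,\|\psi\|_{L^2(\mu^k_\tau)},\]
so $\|v^{\epsilon,k}_\tau\|_{L^2(\mu^k_\tau)}\le d_W(\mu^{k-1}_\tau,\mu^k_\tau)/\tau$. Defining the piecewise-in-time velocity $v_\tau(x,t):=v^{\epsilon,k}_\tau(x)$ for $t\in((k-1)\tau,k\tau]$, squaring, summing, and multiplying by $\tau$ yields $\int_{\Omega_T}|v_\tau|^2\,d\mu_\tau\,dt\le C$ uniformly in $\epsilon,\tau$.

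Finally I would pass to the limit along the subsequence from Theorem \ref{existence}. The convergences $u_\tau\to u^\epsilon$ in $L^1(\Omega_T)$ and $\nabla u_\tau\to\nabla u^\epsilon$ weakly in $L^{\infty,*}(\Omega_T)$, together with smoothness of $V,W$ on the bounded domain and the uniform second moment bound, imply that the momentum $v_\tau u_\tau=\epsilon\nabla u_\tau+(\nabla V+\nabla W*\mu_\tau)u_\tau$ converges distributionally to $v^\epsilon u^\epsilon=\epsilon\nabla u^\epsilon+(\nabla V+\nabla W*\mu^\epsilon)u^\epsilon$. Lower semi-continuity of the action $\int|v|^2\,d\mu$ under narrow convergence of $\mu$ and weak convergence of the momentum $v\mu$ (Theorem 5.4.4 of \cite{gflow}) then gives $\int_{\Omega_T}|v^\epsilon|^2 u^\epsilon\,dxdt\le\liminf_\tau\int_{\Omega_T}|v_\tau|^2\,d\mu_\tau\,dt\le C$. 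The main obstacle is precisely this last step: $v^\epsilon$ carries the singular factor $\nabla u^\epsilon/u^\epsilon$, which is not pointwise defined on $\{u^\epsilon=0\}$; the resolution is to work at the level of the momentum rather than the velocity itself, since $v^\epsilon u^\epsilon$ is directly approximable from $v_\tau u_\tau$ without any issue at the zero set. A secondary point is making sure every constant remains $\epsilon$-free, which is guaranteed since all error terms in Proposition \ref{prop 2} come from the geometry of $\Omega_T$ and from the uniform lower bounds on $V,W$.
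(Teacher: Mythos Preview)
Your argument is correct and follows the same overall architecture as the paper's proof: obtain the uniform discrete energy dissipation bound $\sum_k d_W^2(\mu^{k-1}_\tau,\mu^k_\tau)/\tau\le C$ from Proposition~\ref{prop 2}, deduce $\|v^{\epsilon,k}_\tau\|_{L^2(\mu^k_\tau)}\le d_W(\mu^{k-1}_\tau,\mu^k_\tau)/\tau$, sum, and pass to the limit via Theorem~5.4.4 of \cite{gflow}.

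The one genuine difference is how you reach the pointwise slope bound. The paper first proves the abstract metric slope inequality $|\partial\phi^\epsilon(k\tau)|(\mu^{\epsilon,k}_\tau)\le d_W(\mu^{\epsilon,k-1}_\tau,\mu^{\epsilon,k}_\tau)/\tau$ (the standard Lemma~3.1.3 argument from \cite{gflow}), and then separately identifies $\|v^{\epsilon,k}_\tau\|_{L^2(\mu^{\epsilon,k}_\tau)}$ with the slope by differentiating $s\mapsto\phi^\epsilon(\textbf{t}_s\#\mu^{\epsilon,k}_\tau)$ along compactly supported perturbations $\textbf{t}_s=(1-s)\textbf{i}+s\textbf{t}$. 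You shortcut this by reading the same inequality directly off the Euler--Lagrange identity~\eqref{EL'} via Cauchy--Schwarz on the transport term. This is a legitimate streamlining: the EL identity has already been established in the existence proof, and your duality step (density of compactly supported vector fields in $L^2(\mu^k_\tau)$, with $\mu^k_\tau$ supported in the open set $\Omega(k\tau)$) is clean. The paper's route has the advantage of making the gradient-flow structure explicit, which is conceptually useful elsewhere in Section~\ref{part2}; your route is shorter and avoids redoing the first-variation computation. Both land on the identical discrete bound and the identical limiting argument.
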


\begin{cor}\label{3.3}
Settings are as above. For any $0<T'\leq T$, $\int_{0\leq t\leq T'}\mathcal{U}^\epsilon(\mu^\epsilon(t))dt \rightarrow 0$ as $\epsilon\rightarrow 0$.
\end{cor}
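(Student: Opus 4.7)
The goal is $\int_0^{T'}\mathcal{U}^\epsilon(\mu^\epsilon(t))\,dt \to 0$. Since $\mathcal{U}^\epsilon(\mu) = \epsilon \int u\log u\,dx$ already carries an explicit factor of $\epsilon$, the plan is to sandwich the entropy $\int u^\epsilon \log u^\epsilon\,dx$ between two quantities that are uniformly bounded in $\epsilon$; multiplying by $\epsilon$ and integrating in time will then send everything to zero.

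The lower bound is straightforward: since each $\Omega(t)$ is bounded with $|\Omega(t)|\leq M$ uniformly, and $\mu^\epsilon(t)$ is a probability measure supported there, Jensen's inequality applied to the convex function $s \log s$ yields $\int u^\epsilon\log u^\epsilon\,dx \geq -\log M$. Thus $\mathcal{U}^\epsilon(\mu^\epsilon(t)) \geq -\epsilon \log M$, so $\int_0^{T'}\mathcal{U}^\epsilon\,dt \geq -\epsilon T' \log M \to 0$.

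For the upper bound I would differentiate $\int u^\epsilon \log u^\epsilon\,dx$ along the flow. Using the equation $\partial_t u^\epsilon = \nabla \cdot(u^\epsilon v^\epsilon)$ (with $v^\epsilon$ as in the corollary statement), Reynolds' transport theorem for the moving domain, and integration by parts, the co-normal boundary condition causes several cancellations and yields
\[
\frac{d}{dt}\int_{\Omega(t)} u^\epsilon \log u^\epsilon\,dx = -\epsilon\, I(u^\epsilon) + \int u^\epsilon(\Delta V + \Delta W * \mu^\epsilon)\,dx - \int_{\partial\Omega(t)} u^\epsilon\left[(\nabla V + \nabla W *\mu^\epsilon)\cdot n + c\right]dS,
\]
where $I(u) = \int |\nabla u|^2/u$ is the Fisher information. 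The first term on the right is a non-positive dissipation; the second is bounded by a constant uniformly in $\epsilon$ since $V,W\in C^2$, $\Omega_T$ is bounded, and $\int u^\epsilon\,dx = 1$; the third has absolute value at most $C\|u^\epsilon\|_{L^1(\partial\Omega(t))}$.

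The main obstacle is controlling this boundary trace uniformly in $\epsilon$. I would combine the Sobolev trace estimate $\|u^\epsilon\|_{L^1(\partial\Omega(t))} \leq C(1 + \|\nabla u^\epsilon\|_{L^1(\Omega(t))})$ with the Cauchy--Schwarz bound $\|\nabla u^\epsilon\|_{L^1(\Omega(t))} \leq (\int u^\epsilon\,dx)^{1/2}\, I(u^\epsilon)^{1/2} = I(u^\epsilon)^{1/2}$, and absorb the $\sqrt{I(u^\epsilon)}$ contribution into the $-\epsilon I(u^\epsilon)$ dissipation through a weighted Young inequality. The residual is then controlled using the time-integrated Fisher bound $\epsilon^2\int_0^T I(u^\epsilon)\,dt \leq C$, which follows from Lemma~\ref{slope} after expanding $|v^\epsilon|^2 u^\epsilon$ and treating the cross term $\epsilon\int \nabla u^\epsilon \cdot (\nabla V + \nabla W *\mu^\epsilon)\,dx$ via Cauchy--Schwarz. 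The delicate balancing of $\epsilon$-powers in this absorption is the technical heart of the argument. The upshot is $\int u^\epsilon(t)\log u^\epsilon(t)\,dx \leq \int u_0\log u_0\,dx + Ct$ uniformly in $\epsilon\in(0,1)$; since $\mu_0 \in Dom(\phi^1,0)$ guarantees $\int u_0 \log u_0\,dx < \infty$, this gives $\mathcal{U}^\epsilon(\mu^\epsilon(t)) \leq C\epsilon$ and hence $\int_0^{T'}\mathcal{U}^\epsilon\,dt \to 0$, completing the proof together with the lower bound.
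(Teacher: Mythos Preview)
Your lower bound is fine, and your formula for $\frac{d}{dt}\int u^\epsilon\log u^\epsilon$ is correct (the $u\log u$ boundary contributions do cancel nicely via the co-normal condition). The gap is in the upper bound: the absorption you describe does \emph{not} close with a constant independent of $\epsilon$, and in fact the pointwise bound you are aiming for,
\[
\int_{\Omega(t)} u^\epsilon(t)\log u^\epsilon(t)\,dx \ \le\ \int u_0\log u_0\,dx + Ct\quad\text{uniformly in }\epsilon,
\]
is generically \emph{false}. The paper itself emphasizes that the first-order limit $\mu(t)$ can concentrate mass in finite time; since $\mu^\epsilon(t)\to\mu(t)$, for such $t$ one has $\int u^\epsilon(t)\log u^\epsilon(t)\to+\infty$ as $\epsilon\to0$. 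So no argument can deliver that uniform pointwise bound.

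Concretely, your inequality reads $\frac{d}{dt}H(t)\le -\epsilon I + C + C I^{1/2}$ with $H=\int u^\epsilon\log u^\epsilon$. Young's inequality $CI^{1/2}\le \epsilon I + C/\epsilon$ leaves a residual $C/\epsilon$, so at best $H(t)\le H(0)+Ct/\epsilon$ and hence $\mathcal{U}^\epsilon(t)\le \epsilon H(0)+Ct$, which does not vanish. Integrating in time first does not help either: from $\int_0^T I\,dt\le C\epsilon^{-2}$ you only get $\int_0^T I^{1/2}\,dt\le C\epsilon^{-1}$, whence $\int_0^T H\,dt\le C+C\epsilon^{-1}$ and $\int_0^T\mathcal{U}^\epsilon\,dt\le C$, still not $o(1)$.

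The paper avoids this by never seeking a pointwise bound on $H(t)$. It instead combines the Fisher information bound $\int_0^T I\,dt\le C\epsilon^{-2}$ (exactly the estimate you extract from Lemma~\ref{slope}) with the Euclidean log-Sobolev inequality $H(t)\le \tfrac{d}{2}\log(CI(t))$, so that $\int_0^T \exp(2H(t)/d)\,dt\le C\epsilon^{-2}$. A level-set splitting $\{H\le\epsilon^{-1/2}\}\cup\{H>\epsilon^{-1/2}\}$ then yields $\int_0^T H(t)\,dt\le C\epsilon^{-1/2}$, hence $\int_0^T\mathcal{U}^\epsilon\,dt\le C\epsilon^{1/2}\to0$. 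The essential missing ingredient in your proposal is a mechanism (here, log-Sobolev) that converts the $L^1$-in-time control of $I$ into $o(\epsilon^{-1})$ control of $\int H\,dt$; linear absorption of $I^{1/2}$ into $-\epsilon I$ is too coarse for this.
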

\begin{proof}
By the Euclidean Logarithmic Sobolev inequality (see \cite{sobolev}\cite{sobolev1}) and the fact that $u^\epsilon(t)$ is supported in $\Omega(t)$,
\[\int_{\Omega(t)}u^\epsilon\log u^\epsilon dx \leq \frac{d}{2}\log (\frac{1}{2\pi d e}\int_{\Omega(t)}\frac{|\nabla u^\epsilon|^2}{u^\epsilon}dx).\]
Write $\mathcal{U}^\epsilon(t):=\mathcal{U}^\epsilon(\mu^\epsilon(t))$. Then
\[\int_0^T\exp\left(\epsilon^{-1}\mathcal{U}^\epsilon(t)\right)dt\leq C\int_{\Omega_T}\frac{|\nabla u^\epsilon|^2}{u^\epsilon}dxdt\leq C\epsilon^{-2}. \]
We used Lemma \ref{slope} and the regularity of $V,W$ in the last inequality. Now for $\epsilon$ small enough, assume $\epsilon^2 e^N\geq N$ holds for all $N\geq \epsilon^{-\frac{1}{2}}$. Thus
\[\int_0^T \epsilon^{-1}\mathcal{U}^\epsilon(t)dt\leq \int_0^T \epsilon^{-\frac{1}{2}}dt+\int_{\mathcal{U}^\epsilon(t)\geq \epsilon^{\frac{1}{2}}} \epsilon^{-1}\mathcal{U}^\epsilon(t)dt\]
\[\leq C\epsilon^{-\frac{1}{2}}+\int_0^T \epsilon^2\exp\left(\epsilon^{-1}\mathcal{U}^\epsilon(t)\right)dt\leq C\epsilon^{-\frac{1}{2}}\]
which finishes the proof.
\end{proof}

The following lemma is one important ingredient to the proof of the convergence. Note it is possible that $\mu\notin$ the proper domain of $\phi^1$ (or equivalently of $\phi^\epsilon$), the plan is to regularize it and replace it by a $\tilde{\mu}\in\mathcal{P}^2_a$. As explained in the introduction, we look for a $\tilde{\mu}$ with density function uniformly bounded by $\epsilon^{-\alpha}$ for some $0<\alpha<1$. %If this happens, we have $\epsilon \tilde{\mu}\log \tilde{\mu}\rightarrow 0$. Also $u,\tilde{\mu}$ should be close. It turns out that here 
Additionally we need $d_{\mu^\epsilon}(\mu,\tilde{\mu})$ to be small where $d_{\mu^\epsilon}(\cdot,\cdot)$ is the Pseudo-Wasserstein metric with base ${\mu^\epsilon}$. As a remark, this is stronger than requiring $d_W(\mu,\tilde{\mu})$ to be small.

\begin{lemma}\label{modi}
Given any $\mu\in\mathcal{P}_2(\overline{\Omega}),v\in\mathcal{P}^a_2({\Omega})$ where $\Omega$ is a bounded, convex subset of $\mathbb{R}^d$. For any $s>0$ small enough, there exists
$\mu_s\in\mathcal{P}^2_a(\Omega)$ such that 
\begin{eqnarray*}
&d_v(\mu,\mu_s)\leq Cs \text{ and }\\ &\max\left\{\mu_s(x),x\in\Omega\right\}\leq s^{-d}.
\end{eqnarray*}
The constant $C$ only depends on the diameter and the volume of $\Omega$.
\end{lemma}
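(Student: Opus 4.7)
The plan is to construct $\mu_s$ as a generalized geodesic with base $v$ joining $\mu$ to the uniform probability measure $\nu := vol\{\Omega\}^{-1}\chi_\Omega\,dx$. Since $v\in\mathcal{P}_2^a$, Brenier's theorem provides optimal transport maps $T_\mu := \textbf{t}_v^\mu$ and $T_\nu := \textbf{t}_v^\nu$, both gradients of convex functions. I will set $\tilde{s} := s\cdot vol\{\Omega\}^{-1/d}$ (which is $\leq 1$ when $s$ is small enough) and define
\[
T_s := (1-\tilde{s})T_\mu + \tilde{s}T_\nu, \qquad \mu_s := (T_s)_\# v.
\]
Because $\Omega$ is convex, $T_s(x)\in\overline{\Omega}$ for $v$-a.e.~$x$; and since $T_s$ is itself the gradient of a convex function (the sum of two such), Brenier's uniqueness identifies $T_s$ with $\textbf{t}_v^{\mu_s}$.

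The pseudo-Wasserstein estimate then follows directly from the definition of $d_v$:
\[
d_v^2(\mu,\mu_s) = \int_\Omega |T_\mu - T_s|^2\,dv = \tilde{s}^2\int_\Omega |T_\mu - T_\nu|^2\,dv \leq \tilde{s}^2\,\mathrm{diam}(\Omega)^2,
\]
giving $d_v(\mu,\mu_s)\leq Cs$ with $C$ depending only on the diameter and volume of $\Omega$.

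For the density bound, I will invoke Aleksandrov's theorem so that $T_\mu,T_\nu$ are a.e.~differentiable with symmetric positive semi-definite Jacobians, the Monge-Amp\`ere identity so that $T_\nu$ transporting $v$ (density $g$) to $\nu$ (density $vol\{\Omega\}^{-1}$) yields $\det DT_\nu(x) = vol\{\Omega\}\,g(x)$ at $v$-a.e.~$x$, and McCann's theorem (since $T_s$ is the gradient of a convex function pushing forward an a.c.~measure) so that $\mu_s$ has a density $\rho_s$ satisfying $g(x) = \rho_s(T_s(x))\det DT_s(x)$. Brunn-Minkowski (Lemma \ref{minkowski}) then enters via its corollary, Minkowski's determinant inequality for positive semi-definite matrices:
\[
(\det DT_s)^{1/d} \geq (1-\tilde{s})(\det DT_\mu)^{1/d} + \tilde{s}(\det DT_\nu)^{1/d} \geq \tilde{s}(\det DT_\nu)^{1/d},
\]
yielding $\det DT_s(x) \geq \tilde{s}^d\,vol\{\Omega\}\,g(x)$ and hence
\[
\rho_s(T_s(x)) \leq \frac{g(x)}{\tilde{s}^d\,vol\{\Omega\}\,g(x)} = \frac{1}{\tilde{s}^d\,vol\{\Omega\}} = s^{-d}.
\]

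The main subtlety is that $\mu$ is only a probability measure (possibly singular), so $T_\mu$ may have degenerate Jacobian on a set of positive $v$-measure and the Monge-Amp\`ere identity for $T_\mu$ alone need not hold. The argument is designed to sidestep this: only $T_\nu$ contributes to the lower bound for $\det DT_s$, since the $(1-\tilde{s})(\det DT_\mu)^{1/d}$ term in Minkowski's inequality can simply be discarded as nonnegative. The convexity of $\Omega$ plays a twofold role, both keeping the interpolant inside $\Omega$ and underwriting the Brunn-Minkowski step that drives the density bound.
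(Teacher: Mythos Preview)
Your proof is correct and follows essentially the same strategy as the paper: build $\mu_s$ as the generalized geodesic with base $v$ from $\mu$ to the uniform probability measure on $\Omega$, obtain the $d_v$ bound from boundedness of $\Omega$, and obtain the $L^\infty$ density bound via Brunn--Minkowski. The only difference is in the execution of the last step: the paper applies Brunn--Minkowski at the level of sets (bounding $\mu_s(B_r)$ through the volume of images of preimages), whereas you invoke its matrix corollary, Minkowski's determinant inequality, pointwise on the Aleksandrov Jacobians---your route makes the use of the convex-gradient structure of the Brenier maps more explicit and arguably tidier.
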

\begin{proof}

Without loss of generality, suppose $\Omega$ has volume $1$ in Euclidean measure. Let $e$ be the Euclidean measure restricted in $\Omega$ and then $e\in\mathcal{P}^2_a(\Omega)$.  Since $v$ is absolutely continuous, $\textbf{t}_v^e$ and $\textbf{t}_v^{\mu}$ exist and $\textbf{t}_v^e$ is one to one on $\Omega$ outside a $v$ zero measure subset. Let
\[\mu_s:=\left(\left((1-s)\textbf{t}^{\mu}_{v}+s\textbf{t}^{e}_{v}\right)_\# v\right)\] be the generalized geodesic joining ${\mu},e$ with base $v$, which is defined as in Definition 9.2.2 \cite{gflow}. Due to the convexity of the domain, we have $\mu_s\in\mathcal{P}_2(\Omega)$. By Proposition 2.6.4 \cite{katy}, the generalized geodesic is of constant speed in the sense that
\[ d_v({\mu},\mu_s)=s d_v({\mu},e).\]
Since the domain is bounded, $d_v({\mu},e)$ is uniformly bounded for all probability measures $v,{\mu},e$. We deduce that
$d_v({\mu},\mu_s)\leq Cs$.

Now we show the pointwise boundedness of $\mu_s$. Let $\varphi=\chi_{B_r}(x)$ which equals $1$ in $B_r$ and $0$ outside. Thus
\begin{equation}\label{Bxr}
\int_\Omega \varphi d\mu_s=\int_\Omega \varphi\left((1-s)\textbf{t}^{\mu}_{v}+s\textbf{t}^{e}_{v}\right)dv=v\left\{\left((1-s)\textbf{t}^{\mu}_{v}+s\textbf{t}^{e}_{v}\right)^{-1}B_r(x)\right\}    
\end{equation}
%\begin{figure}[t]\centering\includegraphics[width=0.4\textwidth]{onetoone.JPG}\caption{map $\left((1-s)\textbf{t}^{\mu}_{v}+s\textbf{t}^{e}_{v}\right)$}\end{figure}
Write $S:=\left((1-s)\textbf{t}^{\mu}_{v}+s\textbf{t}^{e}_{v}\right)^{-1}B_r(x)$. 
By definition
\[vol\left\{B_r(x)\right\}=vol\left\{\left((1-s)\textbf{t}^{\mu}_{v}+s\textbf{t}^{e}_{v}\right)S\right\}.\]
Now we apply  Brunn-Minkowski inequality (Lemma \ref{minkowski}) to find the above
\[\geq vol\left\{s\textbf{t}^{e}_{v}S\right\}=s^dv(S).\]

So
\[v(S)\leq s^{-d} vol\{B_r(x)\}= vol\left\{B_0(1)\right\}(\frac{r}{s})^d.\] By \eqref{Bxr}, for any $\varphi=\chi_{B_r}(x)$ we find out
\[\frac{1}{vol\left\{B_r(x)\right\}}\int_\Omega \varphi d\mu_s\leq s^{-d}.\]
This shows that $u_s$ is an $L^\infty$ function in $\Omega$ with bound $s^{-d}$.
\end{proof}

Now we give our main theorem in the second part of this paper.

\begin{theorem}\label{convergence}
Assume \textsc{(C1)-(C4)(O1)(O2)} hold and $\mu_0\in Dom(\phi^1,0)$.
Suppose $\Omega_T$ is bounded and $\Omega(t)$ is convex for all $t$. Let $\mu^\epsilon$ be the unique weak solutions to equations \eqref{eqn2} and $\mu$ be the unique weak solution to equation \eqref{eqn1}. Then there exist some constants $c,C$ depending on the universal constants and the domain such that
\[d^2_W(\mu,\mu^\epsilon)(t)\leq C\epsilon^\frac{1}{d+2}t e^{ct} \text{ for all } t\in[0,T].\]
\end{theorem}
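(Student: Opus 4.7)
The plan is to run a Gronwall-type comparison on $d_W^2(\mu(t),\mu^\epsilon(t))$, exploiting that both curves are (possibly projected) gradient flows of $\lambda$-convex functionals. The EVI satisfied by $\mu^\epsilon$, for an admissible test measure $\sigma$, takes the form
\[
\tfrac{1}{2}\tfrac{d^+}{dt}d_W^2(\mu^\epsilon(t),\sigma)+\phi^\epsilon(\mu^\epsilon(t))+\tfrac{\lambda'}{2}d_{\mu^\epsilon(t)}^2(\mu^\epsilon(t),\sigma)\leq \phi^\epsilon(\sigma),
\]
with its counterpart for $\mu(t)$ and $\phi$. The obvious test $\sigma=\mu(t)$ is forbidden because $\phi^\epsilon(\mu(t))=+\infty$ whenever $\mu(t)$ concentrates mass, which can happen in finite time even for smooth initial data \cite{carrillo}. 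This is the principal obstacle.

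The resolution is Lemma \ref{modi}: for each $t$ and each parameter $s>0$, regularize $\mu(t)$ by taking $\tilde\mu_s(t)$ to be the generalized geodesic with base $\mu^\epsilon(t)$ joining $\mu(t)$ to the uniform measure on $\Omega(t)$. The lemma provides simultaneously the pseudo-Wasserstein control $d_{\mu^\epsilon(t)}(\mu(t),\tilde\mu_s(t))\leq Cs$ and, via the Brunn--Minkowski step that requires the spatial convexity of $\Omega(t)$, the pointwise bound $\|\tilde u_s(t)\|_{L^\infty(\Omega(t))}\leq s^{-d}$. Since $\Omega(t)$ is bounded, the latter converts into the entropy bound $\mathcal{U}^\epsilon(\tilde\mu_s(t))\leq C\epsilon\log(s^{-d})$. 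Spatial convexity of $\Omega(t)$ enters a second time: it ensures that $\phi^\epsilon$ is $\lambda$-convex along generalized geodesics with base $\mu^\epsilon(t)$, which is precisely the class of convexity needed for the pseudo-Wasserstein term to appear on the left of the EVI.

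With the modification in hand, test the $\mu^\epsilon$-EVI at the moving measure $\sigma=\tilde\mu_s(t)$, combine with the chain rule for $\phi$ along $\mu(t)$ (which is a curve of maximal slope in the sense of \eqref{extra}), and pass from $\tilde\mu_s$ back to $\mu$ via $d_W\leq d_{\mu^\epsilon}$ and the triangle inequality. After careful bookkeeping of the cross terms — in particular the contribution coming from the $t$-dependence of the base $\mu^\epsilon(t)$ inside $\tilde\mu_s(t)$ — one arrives at a differential inequality of the schematic form
\[
\tfrac{d}{dt}d_W^2(\mu(t),\mu^\epsilon(t))\leq c\,d_W^2(\mu(t),\mu^\epsilon(t))+C\bigl(\mathcal{U}^\epsilon(\tilde\mu_s(t))+\mathrm{err}(s,\epsilon)\bigr)-2\mathcal{U}^\epsilon(\mu^\epsilon(t)),
\]
where $\mathrm{err}(s,\epsilon)$ collects displacement contributions that are polynomial in $s$. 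Corollary \ref{3.3} ensures $\int_0^T \mathcal{U}^\epsilon(\mu^\epsilon(t))\,dt\to 0$ as $\epsilon\to 0$, so the last term is absorbed harmlessly upon integration, and Gronwall's inequality yields a bound of the form $Cte^{ct}(\epsilon\log(s^{-d})+\mathrm{err}(s,\epsilon))$.

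The final step is to optimize $s$. The dimensional exponent $d$ in the Brunn--Minkowski bound $\|\tilde u_s\|_\infty\leq s^{-d}$, weighed against the displacement error, balances at $s=\epsilon^{1/(d+2)}$, producing the announced rate $d_W^2(\mu(t),\mu^\epsilon(t))\leq C\epsilon^{1/(d+2)}te^{ct}$. The hardest technical step will be the manipulation of the EVI when the test measure $\tilde\mu_s(t)$ itself moves in $t$ through its base $\mu^\epsilon(t)$: one has to produce a quantitative bound on $d_W^2(\mu,\mu^\epsilon)$ and not just on $d_W^2(\tilde\mu_s,\mu^\epsilon)$, while keeping the pseudo-Wasserstein / generalized-geodesic structure aligned on both sides of the inequality. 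This alignment is exactly what the spatial convexity of $\Omega(t)$ and Lemma \ref{modi} were engineered to supply, but controlling the resulting error terms without losing either the $\lambda$-convexity or the $L^\infty$ bound of the modification is the main substance of the proof.
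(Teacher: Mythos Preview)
Your plan matches the paper's proof: the variational inequality for $\phi^\epsilon$ at $\mu^\epsilon$ is tested against the Lemma~\ref{modi} modification $\tilde\mu$ of $\mu$ (base $\mu^\epsilon$, $s=\epsilon^{1/(d+2)}$), the subdifferential inequality for $\phi$ at $\mu$ is tested against $\mu^\epsilon$, the cross term $\int\langle\xi^\epsilon,\textbf{t}^{\tilde\mu}_{\mu^\epsilon}-\textbf{t}^{\mu}_{\mu^\epsilon}\rangle d\mu^\epsilon$ is controlled by Lemma~\ref{slope} together with $d_{\mu^\epsilon}(\mu,\tilde\mu)\leq Cs$, and Gronwall closes. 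One simplification you miss: the paper never invokes Corollary~\ref{3.3} for the term $-\mathcal{U}^\epsilon(\mu^\epsilon(t))$; since $u\log u\geq -e^{-1}$ pointwise and $\Omega(t)$ is bounded, one has $-\mathcal{U}^\epsilon(\mu^\epsilon(t))\leq C\epsilon$ directly, so the log-Sobolev detour is unnecessary. Also, the $t$-dependence of the base that you flag as the hardest step is in fact harmless in the paper's argument, because the modification and the variational inequalities are applied pointwise in $t$ and only then integrated; no differentiation of $\tilde\mu_s(t)$ in $t$ is ever needed.
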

\begin{proof}

For any ${\omega_1}\in\mathcal{P}^a_2\left(\Omega(t)\right)$, let $\mu_s:=(s\textbf{t}^{\omega_1}_{\mu^\epsilon}+(1-s)\textbf{i})_\# \mu^\epsilon$ with $\mu^\epsilon=\mu^\epsilon(t)$. The convexity of the domain implies $\mu_s\in \mathcal{P}_2^a(\Omega(t))$. For any Fr\'echet subdifferential of $\phi^\epsilon$ at $\mu^\epsilon$ (see section 10 \cite{gflow}) $\xi^\epsilon\in L^2(\mu^\epsilon;\mathbb{R}^d)$, we have
\[\liminf_{s\rightarrow 0}\frac{\phi^\epsilon(\mu_s)-\phi^\epsilon(\mu^\epsilon)}{s}\geq \int_{\Omega(t)}\langle\xi^\epsilon,\textbf{t}^{\omega_1}_{\mu^\epsilon}-\text{i}\rangle d\mu^\epsilon.\] By \textsc{(C3)}, $\phi^\epsilon$ is $\tilde{\lambda}-$convex for $\tilde{\lambda} = \min\{\lambda, 3\lambda\}$. So by the Characterization by Variational inequalities and monotonicity in 10.1.1  \cite{gflow}, 
\[\frac{\phi^\epsilon(\mu_s)-\phi^\epsilon(\mu^\epsilon)}{s}\leq \phi^\epsilon({\omega_1})-\phi^\epsilon(\mu^\epsilon)-\frac{\tilde{\lambda}}{2}(1-s)d_W^2({\omega_1},\mu^\epsilon).\]
%Here we didn't need the convexity of the domain and it is possible that $\mu_s$ is supported outside $\Omega(t)$. 
Then we take $s\rightarrow 0$ and find \begin{equation}\label{phiexi}
    \phi^\epsilon({\omega_1})-\phi^\epsilon(\mu^\epsilon)\geq \int_{\Omega(t)}\langle\xi^\epsilon,\textbf{t}^{{\omega_1}}_{\mu^\epsilon}-\textbf{i}\rangle d\mu^\epsilon+\frac{\tilde{\lambda}}{2}d_W^2({\omega_1},\mu^\epsilon).
\end{equation}
By the JKO scheme, $\mu^\epsilon$ is a gradient flow solution and we can choose $\xi^\epsilon=-v^\epsilon$, the tangent velocity field of $\mu^\epsilon$.

Similarly since $\mu$ is a gradient flow solution, $\xi:=P_{x,t}(-\nabla V-\nabla W*\mu)=-v$ is one Fr\'echet subdifferential of $\phi$ at $\mu$ and then for any ${\omega_2}\in\mathcal{P}_2(\overline{\Omega(t)})$
\begin{equation}\label{phixi}
    \phi({\omega_2})-\phi(\mu)\geq \int_{\Omega(t)}\langle\xi,\textbf{t}^{{\omega_2}}_{\mu}-\textbf{i}\rangle d\mu+\frac{\tilde{\lambda}}{2}d_W^2({\omega_2},\mu).
\end{equation}

For each $t$ we use Lemma \ref{modi} to modify $\mu$. Take $v=\mu^\epsilon,s=\epsilon^\frac{1}{d+2}$ and let $\tilde{\mu}=\mu_{s}\in \mathcal{P}^a_2(\Omega(t))$ with $\tilde{\mu}=\tilde{u}\mathcal{L}^d$. Then for all $0\leq t\leq T$
\[\max \left\{\tilde{u}(x,t)\right\}\leq \epsilon^{-\frac{d}{d+2}}, \quad d_{\mu^\epsilon}(\tilde{\mu},\mu)\leq C\epsilon^\frac{1}{d+2}.\]
Plug in $\omega_1=\tilde{\mu}$ in \eqref{phiexi}, 
\[\phi^\epsilon\left(\tilde{\mu}(t)\right)-\phi^\epsilon\left(\mu^\epsilon(t)\right)\geq \int_{\Omega(t)}\langle\xi^\epsilon,\textbf{t}^{\tilde{\mu}}_{\mu^\epsilon}-\textbf{i}\rangle d\mu^\epsilon+\frac{\tilde{\lambda}}{2}d_W^2(\tilde{\mu},\mu^\epsilon)\]
\begin{equation*}
 \geq \int_{\Omega(t)}\langle\xi^\epsilon,\textbf{t}^{\mu  }_{\mu^\epsilon}-\textbf{i}\rangle d\mu^\epsilon+\int_{\Omega(t)}\langle\xi^\epsilon,\textbf{t}^{\tilde{\mu}}_{\mu^\epsilon}-\textbf{t}^{{\mu}}_{\mu^\epsilon}\rangle d\mu^\epsilon+\frac{\tilde{\lambda}}{2}(d_W(\mu,\mu^\epsilon)+C\epsilon^\frac{1}{d+2})^2.   
\end{equation*}
Let $\gamma^\epsilon$ be an optimal transport plan between $\mu, \mu^\epsilon$. The above
\begin{equation}\label{phiepsilon}
 \geq \int_{\Omega(t)^2}\langle\xi^\epsilon(y),x-y\rangle d\gamma^\epsilon+\int_{\Omega(t)}\langle\xi^\epsilon,\textbf{t}^{\tilde{\mu}}_{\mu^\epsilon}-\textbf{t}^{{\mu}}_{\mu^\epsilon}\rangle d\mu^\epsilon-Cd^2_W(\mu,\mu^\epsilon)-C\epsilon^\frac{2}{d+2}.   
\end{equation}
Take $w_2=\mu^\epsilon$ in \eqref{phixi},
\begin{equation}\label{phiineq}
    \phi\left(\mu^\epsilon(t)\right)-\phi\left(\mu(t)\right)\geq \int_{\overline{\Omega(t)}}\langle\xi(x),y-x\rangle d\gamma^\epsilon+\frac{\tilde{\lambda}}{2}d_W^2(\mu,\mu^\epsilon).
\end{equation}

%As already proved in Lemma \ref{slope}, for some constant $C$ independent of $\epsilon,t$
Next by H\"{o}lder's inequality
\[\left|\int_{\Omega_T}\langle\xi^\epsilon,\textbf{t}^{\tilde{\mu}}_{\mu^\epsilon}-\textbf{t}^{{\mu}}_{\mu^\epsilon}\rangle d\mu^\epsilon dt \right|\leq \left(\int_{\Omega_T}|\xi^\epsilon|^2 d\mu^\epsilon dt\right)^\frac{1}{2}\left( \int_{\Omega_T}|\textbf{t}^{\tilde{\mu}}_{\mu^\epsilon}-\textbf{t}^{{\mu}}_{\mu^\epsilon}|^2 d\mu^\epsilon dt\right)^\frac{1}{2}.\]
%\[\leq C\left( \int_{\Omega(t)}|\textbf{t}^{\tilde{\mu}}_{\mu^\epsilon}-\textbf{t}^{{\mu}}_{\mu^\epsilon}|^2 d\mu^\epsilon \right)^\frac{1}{2}\]
By Lemma \ref{slope}, $\int_{\Omega_T}|\xi^\epsilon|^2 d\mu^\epsilon dt$ is uniformly bounded and 
\[\left(\int_{\Omega(t)}|\textbf{t}^{\tilde{\mu}}_{\mu^\epsilon}-\textbf{t}^{{\mu}}_{\mu^\epsilon}|^2 d\mu^\epsilon\right)^\frac{1}{2}=d_{\mu^\epsilon}(\mu,\tilde{\mu})\] is the Pseudo-Wasserstein distance induced by $\mu^\epsilon\in \mathcal{P}_2^a$.
So
\[\left|\int_{\Omega_T}\langle\xi^\epsilon,\textbf{t}^{\tilde{\mu}}_{\mu^\epsilon}-\textbf{t}^{{\mu}}_{\mu^\epsilon}\rangle d\mu^\epsilon dt \right|\leq C\int_0^Td_{\mu^\epsilon}(\mu,\tilde{\mu})dt\leq C\epsilon^{\frac{1}{d+2}}T.\]
This inequality as well as \eqref{phiepsilon} \eqref{phiineq} gives for any $T'\in[0,T]$
\[
\int_{\overline{\Omega_{T'}}^2}\langle-\xi(x)+\xi^\epsilon(y),x-y\rangle d\gamma^\epsilon dt\leq \]\[ \int_0^{T'}(\mathcal{U}^\epsilon(\tilde{\mu}(t))-\mathcal{U}^\epsilon({\mu}^\epsilon(t)))dt+C\int_0^{T'}d_W^2(\mu,\mu^\epsilon)dt+C\epsilon^{\frac{1}{d+2}}T'.
\]
Because $\tilde{u}(x,t)\leq \epsilon^{-\frac{d}{d+2}}$ pointwise and the domain is bounded, we have
\[\int_0^{T'}\mathcal{U}^\epsilon(\tilde{\mu})dt=\epsilon\int_{\Omega_{T'}}(\tilde{u}\log \tilde{u})(x,t)dxdt\leq C\epsilon^{\frac{1}{d+2}}T'.\] 
Also note $(u^\epsilon\log u^\epsilon)$ is bounded below, we have $-\mathcal{U}^\epsilon(\mu^\epsilon(t))\leq C\epsilon.$ Then
\begin{equation}
\label{ineqxi}
\int_{\overline{\Omega_{T'}}^2}\langle-\xi(x)+\xi^\epsilon(y),x-y\rangle d\gamma^\epsilon dt\leq  C\int_0^{T'}d_W^2(\mu,\mu^\epsilon)dt+C\epsilon^{\frac{1}{d+2}}T'.
\end{equation}

By Theorem 8.4.7 and Lemma 4.3.4 from \cite{gflow}, we find
\begin{equation*}
\frac{d}{d t}d_W^2(\mu,\mu^\epsilon)\leq 2\int_{\overline{\Omega(t)}^2}\langle v(x)-v^\epsilon(y),x-y\rangle d\gamma^\epsilon
=2\int_{\overline{\Omega(t)}^2}\langle \xi^\epsilon(y)-\xi(x),x-y\rangle d\gamma^\epsilon
.\end{equation*}
By \eqref{ineqxi} and $d_W^2(\mu,\mu^\epsilon)(0)=0$, we deduce that
\[ d_W^2(\mu,\mu^\epsilon)(T')\leq C \int_{0}^{T'}d_W^2(\mu,\mu^\epsilon)dt+\delta(\epsilon)T'\]
for all $T'\in[0, T]$ and $\delta(\epsilon)=C\epsilon^\frac{1}{d+2}$ for some constant $C$ depends only on the domain and universal constants. Then Gronwall's inequality finishes the proof that we have
\[d^2_W(\mu,\mu^\epsilon)(t)\leq \delta{(\epsilon)}te^{ct}.\]

Actually if we keep track of the constants,  $\delta(\epsilon)\leq C\epsilon^\beta$ for all $\beta\in(0,\frac{1}{d+1})$ where $C$ depends on $\beta,\lambda$, the volumes and diameters of $\Omega(t),t\in[0,T]$. 
\end{proof}

\medskip

\appendix

\section{Remark on the Modification Lemma}\label{counterexample}
We make a remark that the modification of $\mu$ done in Lemma \ref{modi} can not be replaced by simply convoluting $\mu$ with a smooth, positive, compactly supported function. We want to show that, the difference between one measure and a ``small perturbation" (including convolutions) of it can be large in the Pseudo-Wasserstein metric for some base measure.
To illustrate the main idea, let us consider the following base measure $v$ which is a sum of delta masses. And instead of convolution, we first consider small shifts.

Suppose in $\mathbb{R}^2$, $\epsilon>0$,
\[v=\frac{1}{2}\delta_{(-1,0)}+\frac{1}{2}\delta_{(1,0)},\; \mu_1=\frac{1}{2}\delta_{(-\epsilon,1)}+\frac{1}{2}\delta_{(\epsilon,-1)},\;
 \mu_2=\frac{1}{2}\delta_{(\epsilon,1)}+\frac{1}{2}\delta_{(-\epsilon,-1)}.
\]
Then the optimal transport maps from $v$ to $\mu_i$ are
\begin{equation*}
    \textbf{t}_v^{\mu_1}(x)=\begin{cases}(\epsilon,-1) &\text{ when }x=(1,0),\\
(-\epsilon,1) &\text{ when }x=(-1,0);
\end{cases}
\quad
\textbf{t}_v^{\mu_2}(x)=\begin{cases}(\epsilon,1) &\text{ when }x=(1,0),\\
(\epsilon,-1) &\text{ when }x=(-1,0).
\end{cases}
\end{equation*}
So
\[d^2_v(\mu_1,\mu_2)=\int_{\mathbb{R}^2} |\textbf{t}_v^{\mu_1}-\textbf{t}_v^{\mu_2}|^2dv=4.\]
For small $\epsilon$, geometrically $\mu_2$ is just a small perturbation of $\mu_1$. This shows that a little shift may cause a large difference in Pseudo-Wasserstein metric. And so it is possible that the convolution of $\mu$ with $\frac{1}{\epsilon^d}\varphi(\frac{\cdot}{\epsilon})$ ($\varphi$ is a bump function and $\epsilon$ is a small positive value) is far away from $\mu$ in view of the Pseudo-Wasserstein metric.

\section{Proof of Proposition \ref{2.1}}
To solve this ODE, we cite the following result from \cite{BV} about the existence of differential inclusions.
\begin{theorem}(Theorem 5.1 \cite{BV}) \label{bv}
Assume $S(t)=\overline{\Omega(t)}^N \subset \mathbb{R}^{dN}$ satisfies the following: 
\begin{eqnarray*}
&&\text{for each }t\in [0,T], S(t) \text{ is nonempty and $r$-prox-regular;}\\
&&\text{the set $S(t)$ varies absolutely continuously (see (H3) \cite{BV}). }
\end{eqnarray*}
Also assume that $F:  \mathbb{R}^{dN} \times [0,T]\rightarrow \left\{\text{nonempty convex compact subsets of  } \mathbb{R}^{dN}\right\}$ satisfies:
\begin{eqnarray*}
&&F(x ,t ) \text{ is upper semicontinuous in $(x,t)$;}\\
&&\text{there exists }\beta(t)\in L^1([0,T],\mathbb{R})\text{ non-negative, such that } |F(x,t)|\leq \beta(t)(1+|x|).
\end{eqnarray*}
Then for any $x_0\in \overline{\Omega(0)}^N$, the following sweeping process with perturbation
\begin{equation*}
\left\{\begin{aligned}
&-\dot{x}(t)\in N\left(\overline{\Omega(t)}^N,x(t)\right)+F(x(t),t) \quad a.e.\quad t\in I,\\
&x(0)=x_0
\end{aligned}\right.
\end{equation*}
has at least one absolutely continuous (in supremum norm) solution $x(t)$. Here $N\left(\overline{\Omega(t)}^N,x(t)\right)$ denotes the normal cone at $x(t)$ if $x(t)$ is on the boundary, otherwise it is an empty set. 
\end{theorem}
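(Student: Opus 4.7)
The plan is to prove Theorem \ref{bv} by Moreau's catching-up algorithm (time-discretization), which is the canonical approach for sweeping processes with perturbation. For each $n\geq 1$, partition $[0,T]$ into subintervals $I_k^n=[t_k^n,t_{k+1}^n]$ of length $h_n=T/n$, set $x_0^n=x_0$, and define iteratively
\[
x_{k+1}^n=\operatorname{proj}_{S(t_{k+1}^n)}\bigl(x_k^n-h_n f_k^n\bigr),\qquad f_k^n\in F(x_k^n,t_k^n),
\]
where $f_k^n$ is obtained by a measurable selection (possible since $F$ has nonempty convex compact values and is upper semicontinuous). The $r$-prox-regularity of $S(t)$ makes the metric projection well-defined and 1-Lipschitz whenever its argument sits within distance $r$ of $S(t_{k+1}^n)$, which is ensured by taking $h_n$ small compared to $r$, the linear growth of $F$, and the variation of $S$. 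The projection property translates into the discrete differential inclusion
\[
-\frac{x_{k+1}^n-x_k^n}{h_n}-f_k^n\in N\bigl(S(t_{k+1}^n),x_{k+1}^n\bigr).
\]

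Next I interpolate: let $x_n(\cdot)$ be piecewise affine between the $x_k^n$, let $\theta_n(t)=t_k^n$ and $f_n(t)=f_k^n$ on $I_k^n$. Using (i) the linear growth $|F(x,t)|\leq \beta(t)(1+|x|)$ and (ii) the absolute continuity of $t\mapsto S(t)$ in Hausdorff distance, I extract a discrete Gronwall estimate that bounds $|x_k^n|$ uniformly in $k,n$ on $[0,T]$ and simultaneously controls $\sum_k |x_{k+1}^n-x_k^n|$ by a uniform absolutely continuous modulus. The key input is that in the catching-up step, the normal-cone component $x_{k+1}^n-(x_k^n-h_nf_k^n)$ has length bounded by the distance from $x_k^n-h_nf_k^n$ to $S(t_{k+1}^n)$, which in turn is controlled by $h_n|f_k^n|$ plus the variation of $S$ on $I_k^n$. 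This provides uniform bounds on $\dot{x}_n$ and $f_n$ in $L^1$, hence in $L^2$ along a further Gronwall iteration if needed.

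By Arzel\`a--Ascoli, a subsequence $x_n\to x$ uniformly on $[0,T]$; by Dunford--Pettis applied to the equi-integrable sequences $\dot{x}_n$ and $f_n$, we have $\dot{x}_n\rightharpoonup \dot{x}$ and $f_n\rightharpoonup f$ weakly in $L^1([0,T];\mathbb{R}^{dN})$. Using $x_n(t)\in S(\theta_n(t)+h_n)$ and the uniform Hausdorff continuity of $S(\cdot)$, one obtains $x(t)\in S(t)$ for every $t$. By upper semicontinuity of $F$, convexity of $F(x,t)$, and Mazur's lemma, the limit $f(t)$ belongs to $F(x(t),t)$ for a.e.\ $t$.

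The main obstacle, and the one place where prox-regularity is essential rather than merely convenient, is passing to the limit in the normal-cone inclusion. For a general closed set, $N(S,\cdot)$ is not upper semicontinuous under weak convergence of the cone element, so one cannot pass directly from $-\dot{x}_n(t)-f_n(t)\in N(S(t_{k+1}^n),x_n(t_{k+1}^n))$ to the limit. For an $r$-prox-regular set, however, the proximal normal cone satisfies a hypomonotonicity estimate: if $\xi_j\in N(S_j,y_j)$ with $|\xi_j|$ uniformly bounded, $S_j\to S$ in Hausdorff distance, $y_j\to y$, and $\xi_j\rightharpoonup\xi$, then $\xi\in N(S,y)$. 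Applying this with $S_j=S(t_{k_j+1}^{n_j})$, together with the weak $L^2$ convergence of $-\dot{x}_n-f_n$ and a standard Egorov/Castaing-type measurable selection argument on $[0,T]$, yields $-\dot{x}(t)-f(t)\in N(S(t),x(t))$ almost everywhere, finishing the proof.
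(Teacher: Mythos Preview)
The paper does not contain its own proof of this theorem: it is quoted verbatim as Theorem~5.1 of \cite{BV} (Edmond--Thibault) and used as a black box to deduce Proposition~\ref{2.1}. So there is nothing in the paper to compare your argument against.

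That said, your outline is the correct one and is precisely the strategy carried out in \cite{BV}: Moreau's catching-up scheme, a discrete Gronwall estimate driven by the linear growth of $F$ together with the absolute-continuity modulus of $t\mapsto S(t)$, Arzel\`a--Ascoli plus weak $L^1$ compactness for $\dot{x}_n$ and $f_n$, Mazur's lemma to place the limit selection inside $F(x(t),t)$, and finally the closedness of the normal-cone graph under the hypomonotonicity that $r$-prox-regularity provides. One small caveat: in the limit step for the normal cone you should make explicit that the sequence $-\dot{x}_n-f_n$ is bounded in $L^2$ (not merely $L^1$), since the hypomonotone closure argument for prox-regular sets requires a uniform bound on the magnitude of the cone elements; this follows once the discrete Gronwall gives $\sup_n\|\dot{x}_n\|_{L^2}<\infty$, which in turn needs $\beta\in L^2$ or a squaring trick on the catching-up estimate. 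With that adjustment your sketch is a faithful condensation of the proof in \cite{BV}.
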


In our case of $C^1$ boundary, the normal cone simply means the collection of all outer normal vectors. Then we prove the following proposition.

\begin{proof}(of Proposition \ref{2.1})
To apply Theorem \ref{bv}, we need to verify all the conditions. 
Proposition 2.5 in \cite{prox} showed that if $\Omega(t)$ is $r$-prox-regular then so is $\Omega(t)^N$. The absolute continuity of $\Omega(t)^N$ follows from condition \textsc{(O1)}. Also the upper semi-continuity of $w(\mu)(x,t)$ follows from the definition \eqref{Fxmu}. We may write $w(x,t)$ for abbreviation.
For each $N$ and all $t$, the linear growth of $|w(\cdot,t)|$ can be proved by definition as well as estimate \eqref{boundm2}. 

In all, the assumptions in Theorem \ref{bv} are satisfied, and thus there exists $x(t)=(x_1,...,x_N)(t)$ absolutely continuous such that
\begin{equation*}%\label{eqn disc}
\left\{\begin{aligned}
 &-\dot{x}(t)\in N\left(\overline{\Omega(t)}^N,x(t)\right)-\left(w(x_1,t),...,w(x_N,t)\right),\quad  a.e. \text{ }t\geq 0,
 \\
 &x(t)\in \overline{\Omega(t)}^N,\quad x(0)=(x_{1,0},x_{2,0}...x_{N,0}).
\end{aligned}
\right.
\end{equation*}

Then we show the solutions above are the solutions for the projected systems similarly as did in Lemma 2.4 \cite{prox}.
Write $-\dot{x_i}(t)=h_i(x,t)-w(x_i,t)$. Note for $x\in \overline{\Omega(t)}^N$,
\[ h(x,t)=(h_1,...,h_N)(x,t)\in N(\overline{\Omega(t)}^N,x(t))\iff h_i(x,t)\in N(\overline{\Omega(t)},x_i(t)).\]
So we can write $k_i(x,t) n(x_i,t)=h_i(x,t)$ such that
\begin{equation*}\label{defki}
\dot{x_i}(t)=w(x_i,t)-k_i(x,t)  n(x_i,t).
\end{equation*}
Set $k_i(x,t)=0$ if $x_i(t) \notin \partial \Omega(t)$ and we have $k_i\geq 0$. Claim
\[w(x_i,t)-k_i(x,t)  n(x_i,t)= P_{x,t} \left(w(x_i,t)\right) \text{ a.e. in time}.\]
Since $x_i(\cdot)$ is absolutely continuous in $\mathbb{R}$, we only need to consider all the $t\in[0,T]$ where $x_i(t)$ are differentiable. Also we only need to consider the case when $x_i(t)\in \partial \Omega(t)$.
Because $x_i(t)$ is supported in $\overline{\Omega(t)}$, $\dot{x_i}(t)\cdot n(x_i,t)\leq c(x_i,t)$.
If $\dot{x_i}\cdot n=c$, we have $w(x_i,t)\cdot n(x_i,t)\geq c(x_i,t)$. By definition of $ P_{x,t} $, we only need to check the equality in the normal direction that
\[ P_{x,t} \left(w(x_i,t)\right)\cdot n(x_i,t)=c(x_i,t)=\left(w(x_i,t)-k_i(x,t)n(x_i,t)\right)\cdot n(x_i,t).\]
If $\dot{x_i}(t)\cdot n(x_i,t)<c(x_i,t),$
%we know $\lim_{t\rightarrow t_0^+}\frac{x_i(t)-x_i(t_0)}{t-t_0}=\lim_{t\rightarrow t_0^-}\frac{x_i(t)-x_i(t_0)}{t-t_0}=\dot{x_i}<c(x_i,t_0)$. 
in view of the continuity of $c(x,t)$, $x_i(t')$ is in the interior of $\Omega(t)$ a.e. for $t'$ close to $t$. Then by continuity, $k_i(x,t)=0$. We also have at $t$
\[ P_{x,t}\left(w(x_i,t)\right)=w(x_i,t)=w(x_i,t)-k_i(x,{t})n(x_i,{t}).
\]
So \eqref{ODE} is satisfied a.e. for all $t$. 
\end{proof}

\section{Proof of Theorem \ref{uniqueness}}
\begin{proof}
Let $\mu$ be a solution to equation \eqref{eqn2'} with initial data $\mu_0$. First we show that $u(t)$ is bounded in $L^2$ norm. Let $\eta_\iota(x)$ be a space mollifier: a non-negative function supported in $B_\iota (\iota<<1)$ with total mass $1$.
Set 
\[u^\iota(x)=\int_{\Omega}\eta_\iota(x-y) u(y)dy=:\eta_\iota*u(x).\] 
Since $u(t)\in L^{1}(\Omega)$, we have for each $t$, $u^\iota(t) \rightarrow u(t)$ in $L^{1}(\Omega)$. Fix any function $\varphi\in C_c^\infty\left((0,T)\right)$ and $x$, then $\eta_\iota(x-\cdot)\varphi(t)$ is a smooth function. From the definition of weak solutions
\[\int_0^T u^\iota (x,t)\varphi'(t) dt=\int_0^T \varphi(t)\int_{\Omega}\left(\nabla u+\nabla V u+(\nabla W*\mu)u\right)(y,t)\cdot\nabla_y\eta_\iota(x-y)dydt.\]
This implies that for all $x$ and $\iota$
\[u^\iota(x,\cdot)\in W^{1,\infty}([0,T],dt)\quad\text{ and }\]
\[\frac{\partial}{\partial t}u^\iota(x,t)=\int_{\Omega}\left(\nabla u+\nabla V u+(\nabla W*\mu)u\right)*\nabla\eta_\iota dy \text{  a.e. }dt.\]
By multiplying $u^\iota$ on both sides and integrating over $\Omega$, we deduce for a.e. $dt$
\begin{align*}
    \frac{1}{2}\frac{d}{d t}\|u^\iota\|^2_{L^2(\Omega)}&=\int_{{\Omega}^{2}}\left(\nabla u+\nabla V u+(\nabla W*\mu)u\right)(y)\cdot\left(\nabla_x\eta_\iota(x-y)\right)u^\iota (x)dydx\\
    &=-\int_{{\Omega}}\left(\nabla u+\nabla V u+(\nabla W*\mu)u\right)(y)\cdot\left(\nabla\eta_\iota *(\eta_\iota*u)\right) (y)dy\\
    &=-\int_{\Omega}\left(\nabla u+\nabla V u+(\nabla W*\mu)u\right)*\eta_\iota\cdot\nabla u^\iota dx.
\end{align*}
Since $|D^2 V|,|D^2 W|$ are bounded in compact sets and $\eta_\iota$ is supported in a small ball, 
\[|(\nabla Vu)*\eta_\iota-\nabla V u^\iota|=O(\iota)u^\iota,\]
\[|\left((\nabla W*\mu) u\right)*\eta_\iota-(\nabla W*\mu) u^\iota|=O(\iota)u^\iota.\]
Then for a.e. $dt$:
\[\frac{1}{2}\frac{d}{d t}\|u^\iota\|^2_{L^2(\Omega)}\leq -\int_{\Omega} (|\nabla u^\iota|^2+u^\iota\nabla V\cdot \nabla u^\iota+u^\iota\nabla W*\mu\cdot \nabla u^\iota)dx+C\iota\int_{\Omega}u^\iota|\nabla u^\iota|dx\]
\[\leq-\|\nabla u^\iota\|^2_{L^2(\Omega)}+C\|u^\iota\|_{L^2(\Omega)}\|\nabla u^\iota\|_{L^2(\Omega)}
\leq C\|u^\iota\|^2_{L^2(\Omega)}.\]
In the above, we used the  boundedness of $\nabla V, \nabla W$ and mean inequality. 
Also notice that $\mu(\cdot)$ is an absolutely continuous curve in $\mathcal{P}^2_a(\Omega)$, so $u^\iota(t)dx \rightarrow u^\iota_0 dx$ in Wasserstein distance as $t\rightarrow 0$. 
By Gronwall inequality, we find out that for $t\leq T$, $u^\iota$ is uniformly bounded in $L^2(\mathbb{R}^d;dx)$. 
Then there's a subsequence of $u^\iota(\cdot)$ that converges to $w(\cdot)$ in $L^2(\Omega\times[0,T];dxdt)$ for some $w(\cdot)$. But since $u^\iota\rightarrow u$ in $L^{1}(\Omega\times[0,T])$, $u(\cdot)=w(\cdot)$ in $L^2(\Omega\times[0,T])$. By uniform boundedness of $u^\iota$ in $L^2(\Omega)$, $u(t)$ is uniformly bounded in $L^2(\Omega)$ for almost all $t\in[0,T]$.

Now suppose there are two solutions $\mu_i=u_i \mathcal{L}^d (i=1,2)$ to equation \eqref{eqn2'} with initial data $\mu^i_0$. 
Let $\rho=u_1-u_2$ and $ \rho^\iota=\rho*\eta_\iota$. We know
\[\rho^\iota_t=\left(\nabla \rho+\nabla V \rho+(\nabla W *\mu_1)u_1-(\nabla W *\mu_2)u_2\right)*\nabla \eta_\iota.\]
Thus
\[
\frac{1}{2}\frac{d}{d t}\|\rho^\iota\|^2_{L^2(\Omega)}=-\int_{\Omega} (|\nabla \rho^\iota|^2+\rho^\iota\nabla V\cdot \nabla\rho^\iota)dx
-\int_{\Omega^{2}}\left(\nabla W(x-y)\cdot \nabla\rho^\iota(x)\,\rho(y)u_1^\iota(x)\right.\]
\begin{equation}\label{rhol2}
\left.+\nabla W(x-y)\cdot\nabla\rho^\iota(x)\,u_2(y)\rho^\iota(x)\right)dydx+O(\iota)\int_{\Omega}\rho^\iota |\nabla \rho^\iota|+u_1^\iota |\nabla \rho^\iota| dx.\end{equation}
Notice
\begin{align*}
\left|\int_{\Omega}(\nabla W*\rho) \cdot \nabla\rho^\iota\, u_1^\iota dx\right|&\leq C\int_{\Omega(t)}|\rho|dx \int_{\Omega}|u_1^\iota \nabla\rho^\iota|dx\\
&\leq C\left\|\rho\right\|_{L^2\left(\Omega(t)\right)}\left\|u_1^\iota\right\|_{L^2(\Omega)}\left\|\nabla\rho^\iota\right\|_{L^2(\Omega)}\\
&\leq C\|\rho\|_{L^2\left(\Omega(t)\right)}\|\nabla\rho^\iota\|_{L^2(\Omega)},
\end{align*}
\[\text{ and  }\left|\int_{\Omega}(\nabla W*\mu_2) \cdot \nabla\rho^\iota\, \rho^\iota  dx\right|\leq C\|\rho^\iota\|_{L^2(\Omega)}\|\nabla\rho^\iota\|_{L^2(\Omega)}.\]
By \eqref{rhol2} and mean inequality, we deduce
\begin{equation*}
\frac{1}{2}\frac{d}{d t}\|\rho^\iota\|^2_{\Omega}
\leq C\|\rho^\iota\|^2_{L^2(\Omega)}+C\|\rho\|^2_{L^2\left(\Omega\right)}.
\end{equation*} 
Since $\rho^\iota(\cdot,t)\rightarrow \rho(\cdot,t)$ in $L^2(\Omega)$ a.e.$dt$, by Gronwall we have the stability result which also shows the uniqueness of solutions in $L^2$ norm.
\end{proof}

\section{Proof of Lemma \ref{slope}}
\begin{proof}
 
Recall the JKO scheme \eqref{jkoscheme}, let $\mu^\epsilon_\tau$ be defined as the discrete solution with time step $\tau$. From the above we know that for $t\leq T$, $\mu^\epsilon_\tau$ converges to $\mu^\epsilon$ uniformly in Wasserstein metric.
For abbreviation, write $\mu^{\epsilon,k}_\tau=\mu^\epsilon_\tau(k\tau)$. We want to show that $\mu^{\epsilon,k}_\tau$ has a finite metric slope at time $k\tau$. As did in Lemma 3.1.3 (Slope estimate) \cite{gflow}, let $v\in Dom(\phi^\epsilon,k\tau)$. Then
\[\phi^\epsilon(\mu^{\epsilon,k}_\tau)-\phi^\epsilon(v)\leq \frac{1}{2\tau}\left(d_W^2(v,\mu^{\epsilon,k-1}_\tau)-d_W^2(\mu^{\epsilon,k-1}_\tau,\mu^{\epsilon,k}_\tau)\right)\]
\[\leq \frac{1}{2\tau}d_W(v,\mu^{\epsilon,k}_\tau)(d_W(v,\mu^{\epsilon,k-1}_\tau)+d_W\left(\mu^{\epsilon,k-1}_\tau,\mu^{\epsilon,k}_\tau)\right).\]
Then
\[|\partial \phi^\epsilon(k\tau)|(\mu^{\epsilon,k}_\tau)=\limsup_{v\rightarrow \mu^{\epsilon,k}_\tau}\frac{\left(\phi^\epsilon(\mu^{\epsilon,k}_\tau)-\phi^\epsilon(v)\right)^+}{d_W(v,\mu^{\epsilon,k}_\tau)}\]\[\leq \limsup_{v\rightarrow \mu^{\epsilon,k}}\frac{1}{2\tau}(d_W(v,\mu^{\epsilon,k-1}_\tau)+d_W\left(\mu^{\epsilon,k-1}_\tau,\mu^{\epsilon,k}_\tau)\right)=\frac{d_W(\mu^{\epsilon,k-1}_\tau,\mu^{\epsilon,k}_\tau)}{\tau}.\]

Next by Proposition \ref{prop 2},
\begin{equation}\label{metricslope}
\tau\sum_{1\leq k\leq T/\tau} |\partial\phi^{\epsilon,k}(k\tau)|(\mu_\tau^{\epsilon,k})\leq \left(\sum_{1\leq k\leq T/\tau}\frac{d_W^2(\mu^{\epsilon,k}_\tau,\mu^{\epsilon,k-1}_\tau)}{\tau}\right)^\frac{1}{2}\leq C.
\end{equation}
The constant $C$ above is independent of $\tau,\epsilon$.

Let $\textbf{t}\in L^2(d\mu^{\epsilon,k}_\tau,\mathbb{R}^d)$ be a transport map which is $\mu^{\epsilon,k}_\tau-$a.e. differentiable and compactly supported in $\Omega(k\tau)$. 
Write $\textbf{t}_s=(1-s)\textbf{i}+s\textbf{t}$.
For $s$ small, %repeat the calculation in Lemma \ref{ti}, we find 
\[\phi^\epsilon(\mu^{\epsilon,k}_\tau)-
\phi^\epsilon(\textbf{t}_s\#\mu^{\epsilon,k}_\tau)=\int_{\Omega(k\tau)}-u^{\epsilon,k}_\tau\log (\det D\textbf{t}_s)dx+\int_{\Omega(k\tau)}\langle \nabla V+\nabla W*\mu^{\epsilon,k}_\tau,\textbf{t}_s-\textbf{i} \rangle d \mu^{\epsilon,k}_\tau\]\[+C\left(\|D^2 V\|_{L^\infty}+\|D^2 W\|_{L^\infty}\right)\left(\int_{\Omega(k\tau)}|\textbf{t}_s-\textbf{i}|^2 d \mu^{\epsilon,k}_\tau\right).\]
Due to the expansion $\det D \textbf{t}_s=1+s \tr\nabla (\textbf{t}-\textbf{i})+o(s)$ as well as the compact support of $\textbf{t}-\textbf{i}$ in $\Omega(k\tau)$, we find the above
\begin{equation*}\label{direct}
    =\int_{\Omega(k\tau)}\langle v^\epsilon_\tau, \textbf{t}_s-\textbf{i}\rangle d \mu^\epsilon_\tau+o\left(d_W(\mu^\epsilon_\tau,\textbf{t}_s\#\mu^\epsilon_\tau)\right).
\end{equation*}
Here $v^{\epsilon,k}_\tau:=\epsilon\frac{\nabla u^{\epsilon,k}_\tau}{u^{\epsilon,k}_\tau}+\nabla V+\nabla W*\mu^{\epsilon,k}_\tau$. Note $d_W(\mu^{\epsilon,k}_\tau,\textbf{t}_s\#\mu^{\epsilon,k}_\tau)=s d_W(\mu^{\epsilon,k}_\tau,\textbf{t}\mu^{\epsilon,k}_\tau)$. We divide the above by $s$ and let $s\rightarrow 0$ to obtain
\[
\int_{\Omega(k\tau)}\langle v^{\epsilon,k}_\tau, \textbf{t}\rangle d \mu^\epsilon_\tau\leq |\partial \phi^\epsilon(k\tau)|(u^{\epsilon,k}_\tau)\|\textbf{t}\|_{L^2(d\mu_\tau^{\epsilon,k})}
\]
Since $\textbf{t}$ can be any compactly supported vector field, we have
\[\left(\int_{\Omega(k\tau)}|v_\tau^{\epsilon,k}|^2d\mu^{\epsilon,k}_\tau\right) \leq  C|\partial \phi^\epsilon(k\tau)|(u^{\epsilon,k}_\tau).\]
Let us define
\[v^\epsilon_\tau(t):=v^{\epsilon,k}_\tau \quad\text{if }t\in ((k-1)\tau,k\tau].\]
Then by \eqref{jkoscheme}, \eqref{metricslope}, $v^\epsilon_\tau\in L^2(u^\epsilon_\tau dxdt, \mathbb{R}^d)$ with a bound independent of both $\tau$ and $\epsilon$. Recall $\mu^\epsilon_\tau\rightarrow \mu^\epsilon$ in Wasserstein metric uniformly for all $t\in[0,T]$, then by Theorem 5.4.4 \cite{gflow} $v^\epsilon_\tau$ converges weakly to some $w\in L^2(u^\epsilon dxdt, \mathbb{R}^d)$. From the previous discussion and the equation, we know that $v^\epsilon_\tau$ converges weakly to $v^\epsilon=\epsilon\frac{\nabla u^\epsilon}{u^\epsilon}+\nabla V+\nabla W*\mu^\epsilon$. % And notice $\mu^\epsilon$ is absolutely continuous with respect to Euclidean measure. 
It is not hard to see that such limit is unique, so we have $v^\epsilon=w$ a.e. $d\mu^\epsilon dt$. This finishes the proof with $C$ independent of $\epsilon$
\end{proof}

%\bibliography{gf}
%\bibliographystyle{plain}

\end{document}